\numberwithin{equation}{section}
\let\aH=\H
\newtheorem{lemma}{Lemma}[section] 
\newtheorem{proposition}[lemma]{Proposition}
\newtheorem{theorem}[lemma]{Theorem}
\newtheorem{corollary}[lemma]{Corollary}
\newcommand{\FLEX}{\relax}
\newcommand{\flex}[1]{\renewcommand{\FLEX}{#1}}
\newtheorem{flexthm}[lemma]{\FLEX}
\newtheorem{prop}[lemma]{Proposition}
\theoremstyle{definition}
\newtheorem{definition}[lemma]{Definition}
\newtheorem{example}[lemma]{Example}
\theoremstyle{remark}
\newtheorem{remark}[lemma]{Remark}
\newcommand{\bh}{\ensuremath{{\mathcal B}({\mathcal H})}}
\newcommand{\cstaralg}{$C^*$-algebra}
\providecommand{\dual}[1]{\ensuremath{#1^{\#}}}
\providecommand{\ddual}[1]{\ensuremath{#1^{\#\#}}}
\newcommand{\dom}{\operatorname{dom}}
\newcommand{\dstext}[1]{\quad\text{#1}\quad}
\newcommand{\innerprod}[1]{\left\langle #1\right\rangle}
\newcommand{\meet}{\mathop{
     \mathchoice{\vcenter{\hbox{\huge
           $\bigwedge$}}}{\wedge}{\wedge}{\wedge}}
     \displaylimits}
\newcommand{\norm}[1]{\left\|{#1}\right\|}
\providecommand{\qed}%
{\hfill \vrule height5pt width4pt depth1pt \vspace{+2.00ex}}
\newcommand{\ran}{\operatorname{range}}%\hbox{{\rm range\,}}}
\newcommand{\spn}{\operatorname{span}}
\newcommand{\bet}{\mbox{$\beta$}}
\newcommand{\bbA}{{\mathbb{A}}}
\newcommand{\bbC}{{\mathbb{C}}}
\newcommand{\bbN}{{\mathbb{N}}}
\newcommand{\bbT}{{\mathbb{T}}}
  \newcommand{\A}{{\mathcal{A}}}
  \newcommand{\B}{{\mathcal{B}}}
  \newcommand{\C}{{\mathcal{C}}}
  \newcommand{\D}{{\mathcal{D}}}
  \newcommand{\E}{{\mathcal{E}}}
  \newcommand{\F}{{\mathcal{F}}}
  \newcommand{\G}{{\mathcal{G}}}
 \let\aH=\H  % This saves the Hungarian umlaut symbol as \aH.
\renewcommand{\H}{{\mathcal{H}}}
  \newcommand{\J}{{\mathcal{J}}}
\renewcommand{\L}{{\mathcal{L}}}
  \newcommand{\M}{{\mathcal{M}}}
  \newcommand{\N}{{\mathcal{N}}}
\renewcommand{\O}{{\mathcal{O}}}
\renewcommand{\P}{{\mathcal{P}}}
\renewcommand{\S}{{\mathcal{S}}}
  \newcommand{\T}{{\mathcal{T}}}
  \newcommand{\Z}{{\mathcal{Z}}}
\newcommand{\fA}{{\mathfrak{A}}}
\newcommand{\fB}{{\mathfrak{B}}}
\newcommand{\fH}{{\mathfrak{H}}}
\newcommand{\fJ}{{\mathfrak{J}}}
\newcommand{\fP}{{\mathfrak{P}}}
\newcommand{\msd}{\operatorname{\textsc{msd}}}
\newcommand{\mtr}{\operatorname{\textsc{mtr}}}
\newcommand{\js}{\veebar}
 \newcommand{\pil}{\alpha_\ell{}}
 \newcommand{\fn}{\mathfrak{n}}
\newcommand{\<}{\langle}
\renewcommand{\>}{\rangle}
\newcommand{\sge}{\sb q}
\newcommand{\Gr}{\operatorname{Graph}}
\newcommand{\cocycle}{\sigma}
\begin{document}

\title[Von Neumann Algebras and Inverse Semigroups]
{Von Neumann
  Algebras and  Extensions of Inverse Semigroups} 
\author[A. P. Donsig]{Allan P. Donsig}\address{Dept. of
Mathematics\\ University of Nebraska-Lincoln\\ Lincoln, NE\\
68588-0130 } \email{adonsig1@math.unl.edu}
\author[A. H. Fuller]{Adam H. Fuller} \address{Dept. of
Mathematics\\ University of Nebraska-Lincoln\\ Lincoln, NE\\
68588-0130 } \email{afuller7@math.unl.edu} 
\author[D.R. Pitts]{David
R. Pitts} \address{Dept. of Mathematics\\ University of
Nebraska-Lincoln\\ Lincoln, NE\\ 68588-0130}
\email{dpitts2@math.unl.edu}
\date{\today}

\keywords{Von Neumann algebra, 
Bimodule, Cartan MASA}
\subjclass[2010]{Primary 46L10,
  Secondary 06E75, 20M18, 20M30, 46L51}

\begin{abstract}
  In the 1970s, Feldman and Moore classified separably acting von
  Neumann algebras containing Cartan MASAs using measured equivalence
  relations and $2$-cocycles on such equivalence relations. In this
  paper, we give a new classification in terms of extensions of
  inverse semigroups.  Our approach is more algebraic in character and
  less point-based than that of Feldman-Moore.  As an application, we
  give a restatement of the spectral theorem for bimodules in terms of
  subsets of inverse semigroups. We also show how our viewpoint leads
  naturally to a description of maximal subdiagonal algebras.
\end{abstract}

\maketitle

\section{Introduction}\label{S: Intro}
Every abelian von Neumann algebra is isomorphic to $L^\infty(X,\mu)$
for a suitable measure space $(X,\mu)$.  Because of this, the theory
of von Neumann algebras is often described as ``non-commutative
integration.''  In a pair of landmark papers, Feldman and
Moore~\cite{FeldmanMooreErEqReI,FeldmanMooreErEqReII} pursued this
analogy further.  They showed that if $\D\simeq L^\infty(X,\mu)$ is a
Cartan MASA in a separably acting von Neumann algebra $\M$, then there
is a Borel equivalence relation $R\subseteq X\times X$ and a
$2$-cocycle $c$ on $R$ such that $\M$ is isomorphic to an algebra $M(R,c)$
consisting of certain measurable functions on $R$ and $\D$ is
isomorphic to the algebra $D(R,c)$ of functions supported on the
diagonal $\{(x,x): x\in R\}$ of $R$.  The multiplication in $M(R,c)$
is essentially matrix multiplication twisted by the cocycle $c$.
Feldman and Moore further show that the isomorphism classes of pairs
$(\M,\D)$ with $\D$ a Cartan MASA in a separably acting von Neumann
algebra $\M$ is in bijective correspondence with the family of
equivalence classes of pairs $(R,c)$ where $c$ is a 2-cocycle on the
measured equivalence relation $R$.
Twisting the multiplication by a cocycle originated in the work of
Zeller-Meier for crossed products of von Neumann
algebras~\cite[Section~8]{Zeller-MeierPrCrCstAlGrAu}, which was itself
an extension of the group-measure construction.  The Cartan pairs of
Feldman and Moore include these crossed products.

Feldman and Moore's work may be characterized as ``point-based'' in
the sense that the basic objects used in their construction are
functions determined up to null sets on appropriate measure spaces.
As a result of the measure theory involved, the Feldman-Moore work is
restricted to equivalence relations with countable equivalence classes
and to von Neumann algebras with separable predual.  Furthermore,
their work demands considerable measure-theoretic prowess.

The goal of the present paper is to recast the Feldman-Moore work in
algebraic terms.  We bypass the measured equivalence relations used by
Feldman and Moore and instead start with an axiomatization of the
inverse semigroups which arise from measured equivalence relations.
Here is a brief description of these inverse semigroups.  Starting
with a measured equivalence relation, Feldman and Moore consider the
family $\S$ of all partial Borel isomorphisms $\phi: X\rightarrow X$
whose graph, $\Gr(\phi):=\{(\phi(x), x): x\in X\}$, is a subset of
$R$.  With composition product, $\S$ becomes an inverse semigroup and
the characteristic function of the set $\Gr(\phi)$ becomes a partial
isometry in $M(R,c)$.  The strong-$*$ closure $\G$ of the inverse
semigroup generated by such isometries and $\bbT I$ is an inverse
semigroup of partial isometries which generates $M(R,c)$.  Further,
$\G$ is an inverse semigroup extension of $\S$.
We axiomatize the class of the inverse semigroups arising as partial
Borel isomorphisms whose graph lies in a measured equivalence
relation; we call members of this class of inverse semigroups
\textit{Cartan inverse monoids}.

Lausch~\cite{LauschCoInSe} has developed a theory of extensions of
inverse semigroups which parallels the theory of extensions of groups.
In particular, Lausch shows that there is a natural notion of
equivalence of extensions, and that up to equivalence, the family of
extensions of a given inverse semigroup by an abelian inverse
semigroup is parametrized by a $2$-cohomology group.  We replace the
2-cocycle on $R$ appearing in the Feldman-Moore work  with an
extension of the Cartan inverse monoid $\S$ by the abelian inverse
semigroup of partial isometries in the \cstaralg\ generated by the
idempotents of $\S$.  From this data, we construct a Cartan MASA in a
von Neumann algebra of the extension.  This is accomplished in
Theorem~\ref{T: indeppsi}.

We show in Theorem~\ref{T: same data} that any Cartan MASA $\D$ in a
von Neumann algebra $\M$ determines an extension of the type mentioned
in the previous paragraph.  In combination, Theorems~\ref{T: same
data} and~\ref{T: indeppsi} show that these constructions are inverses
of each other up to equivalence.  Thus, we obtain the desired
algebraic version of the Feldman-Moore work.

We note that our constructions apply to any pair $(\M,\D)$ consisting of a Cartan MASA $\D$ in the von Neumann algebra $\M$.
We require neither $\M$ to act separably, nor any hypothesis on Cartan inverse monoids which would correspond to countable equivalence classes of measured equivalence relations.

In constructing a Cartan pair from an extension, we build a
representation of the Cartan inverse monoid analogous to the
Stinespring representation of $\pi\circ E$, where $E :
\M\rightarrow\D$ is the conditional expectation and $\pi$ is a
representation of $\D$ on the Hilbert space $\H$.  Since the inverse
semigroup has no innate linear structure (as $\M$ does) we use an
operator-valued reproducing kernel Hilbert space approach.  The
construction of the corresponding reproducing kernel uses the order
structure of $\S$ arising from the action of the idempotents of $\S$.
This action should be viewed as the semigroup analogue of the bimodule
action of $\D$ on $\M$.

An important application of the Feldman-Moore construction is to
characterize the $\D$-bimodules of $\M$ in terms of suitable subsets
of $R$.  For Bures-closed $\D$-bimodules, such a characterization was
obtained by Cameron, Pitts, and
Zarikian~\cite[Theorem~2.5.1]{CameronPittsZarikianBiCaMASAvNAlNoAlMeTh}.
In Theorem~\ref{T: Spectral Theorem} below, we reformulate this
characterization in terms of subsets of $\S$, which we call spectral
sets.  As a result, we describe maximal subdiagonal algebras of $\M$
which contain $\D$ in terms of spectral sets.  In particular, this
provides a proof of~\cite[Theorem~3.5]{MuhlySaitoSolelCoTrOpAl} that
avoids the weak-$*$-closed Spectral Theorem for Bimodules
\cite[Theorem~2.5]{MuhlySaitoSolelCoTrOpAl} whose proof unfortunately
is incomplete.

\section{Preliminaries} \label{S: prelim}
We begin with a discussion of the necessary ideas about Boolean
algebras and inverse semigroups.

\subsection{Stone's representation theorem}\label{Ss: Stone}
Let $\L$ be a Boolean algebra, and let $\widehat{\L}$ be the character
space of $\L$, that is, the set of
all lattice homomorphisms of $\L$ into the two element lattice
$\{0,1\}$.   For each $e\in \E$, let
\begin{equation*} G_e = \{ \rho\in\widehat{L} \colon \rho(e)=1\}.
\end{equation*} Stone's representation theorem shows the sets $\{G_e
\colon e\in \L\}$ form a basis for a compact Hausdorff topology on
$\widehat{\L}$ (\cite{StoneApThBoRiGeTo}, or see e.g.,
\cite{GivantHalmosInBoAl}).  In this topology, each set $G_e$ is
clopen.  Thus Stone's theorem represents $\L$  as the algebra
of clopen sets in $\widehat{\L}$.  Equivalently, $\L$ can be viewed as
the lattice of projections in $C(\widehat{\L})$.

We now show that $C(\widehat{\L})$ is the universal \cstaralg\ of
$\L$.
\begin{definition} Let $\L$ be a Boolean algebra.  A
  \textit{representation} of $\L$ is a map $\pi:\L\rightarrow
  \text{proj}(\B)$ of $\L$ into the projection lattice of a
 \cstaralg\ $\B$ such that for every
  $s,t\in\L$, $\pi(s\meet t)=\pi(s)\pi(t)$.
\end{definition}

\begin{proposition}\label{P: universal1} Let $\L$ be a Boolean algebra
with character space $\widehat{\L}$.  For each $s\in\L$, let
$\widehat{s}\in C(\widehat{\L})$ be the Gelfand transform,
$\widehat{s}(\rho)=\rho(s)$.  Then $C(\widehat{\L})$ has the following
universal property: if $\B$ is a \cstaralg\ and $\theta:\L\rightarrow
\B$ is a representation such that $\theta(\L)$ generates $\B$ as a
\cstaralg, then there exists a unique $*$-epimorphism
$\alpha:C(\widehat{\L})\rightarrow \B$ such that for every $s\in\L$,
\[\theta(s)=\alpha(\widehat{s}).\]
\end{proposition}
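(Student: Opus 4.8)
The plan is to identify, via Stone duality, each Gelfand transform $\widehat{s}$ with the characteristic function $\chi_{G_s}$ of the clopen set $G_s\subseteq\widehat{\L}$, and to observe that $s\mapsto\widehat{s}$ is itself a representation of $\L$ in the sense above, since $G_{s\wedge t}=G_s\cap G_t$ forces $\widehat{s\wedge t}=\widehat{s}\,\widehat{t}$. The linear span $\A_0:=\spn\{\widehat{s}:s\in\L\}$ is then a unital ($\widehat{1}=1$), self-adjoint (each $\widehat{s}$ is real-valued) subalgebra of $C(\widehat{\L})$ that separates the points of $\widehat{\L}$, so by the Stone--Weierstrass theorem $\A_0$ is dense in $C(\widehat{\L})$; in particular $\{\widehat{s}\}$ generates $C(\widehat{\L})$ as a \cstaralg. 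Uniqueness of $\alpha$ is then immediate: any two $*$-homomorphisms agreeing on the generating set $\{\widehat{s}\}$ agree on $\A_0$, hence on its closure $C(\widehat{\L})$ by continuity.

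For existence I would build $\alpha$ first on $\A_0$ by the only possible formula, $\alpha\bigl(\sum_i c_i\widehat{s_i}\bigr)=\sum_i c_i\,\theta(s_i)$, and then extend by continuity. The crux is well-definedness, and here I would pass to atoms. Given finitely many $s_1,\dots,s_n$, let $\L_0\subseteq\L$ be the finite Boolean subalgebra they generate, with atoms $a_1,\dots,a_m$; then $s_i=\bigvee_{a_j\le s_i}a_j$, so $\widehat{s_i}=\sum_{a_j\le s_i}\widehat{a_j}$, and the functions $\widehat{a_1},\dots,\widehat{a_m}$, being characteristic functions of disjoint nonempty clopen sets, are linearly independent in $C(\widehat{\L})$. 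Thus rewriting $f=\sum_i c_i\widehat{s_i}=\sum_j d_j\widehat{a_j}$ exhibits $\|f\|_\infty=\max_j|d_j|$ and shows that $f=0$ iff every $d_j=0$.

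The same rewriting must be mirrored on the $\B$ side. Because distinct atoms satisfy $a_j\wedge a_k=0$, the projections $\theta(a_j)$ are pairwise orthogonal, as $\theta(a_j)\theta(a_k)=\theta(a_j\wedge a_k)=\theta(0)=0$; and because $\theta$ carries the disjoint join $s_i=\bigvee_{a_j\le s_i}a_j$ to a sum, we get $\theta(s_i)=\sum_{a_j\le s_i}\theta(a_j)$, whence $\sum_i c_i\theta(s_i)=\sum_j d_j\theta(a_j)$. Since the nonzero $\theta(a_j)$ are mutually orthogonal projections, $\bigl\|\sum_j d_j\theta(a_j)\bigr\|=\max\{|d_j|:\theta(a_j)\ne 0\}\le\|f\|_\infty$. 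This simultaneously proves that $\alpha$ is well-defined (if $f=0$ then all $d_j=0$, so $\alpha(f)=0$) and contractive on $\A_0$; being multiplicative and self-adjoint on $\A_0$ as well, it extends to a $*$-homomorphism $\alpha:C(\widehat{\L})\to\B$ with $\alpha(\widehat{s})=\theta(s)$. Finally $\alpha$ is onto, since its image is a \cstaralg\ containing the generating set $\theta(\L)$.

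The step I expect to be the main obstacle is exactly this atomic bookkeeping behind well-definedness: knowing that $\theta$ takes the orthogonal decomposition $s_i=\bigvee_{a_j\le s_i}a_j$ to the orthogonal sum $\theta(s_i)=\sum_{a_j\le s_i}\theta(a_j)$. Meet-preservation alone yields orthogonality of the $\theta(a_j)$ but not this additivity (nor $\theta(0)=0$); one must use that a representation respects complements and disjoint joins, i.e. that $\theta$ is a homomorphism of Boolean algebras into the commuting projection lattice $\operatorname{proj}(\B)$. Once that additivity is secured, the remaining arguments are the routine Stone--Weierstrass and continuity steps above. Alternatively, since $\theta(\L)$ is a commuting family of projections with unit $\theta(1)$, the algebra $\B$ is abelian, so $\B\cong C(Y)$ for its character space $Y$, and one may instead produce $\alpha$ by dualizing the continuous map $Y\to\widehat{\L}$, $\tau\mapsto\tau\circ\theta$; this route requires the same fact, namely that $\tau\circ\theta$ is a genuine character of $\L$.
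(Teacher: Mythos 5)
Your proof is correct, but it takes a genuinely different route from the paper's. The paper argues dually: since $\theta(\L)$ is a commuting family of projections generating $\B$, the algebra $\B$ is abelian; each character $\rho\in\widehat{\B}$ pulls back to $\rho\circ\theta\in\widehat{\L}$, the dual map $\dual{\theta}\colon\widehat{\B}\to\widehat{\L}$ is continuous, and $\alpha$ is defined in one stroke via Gelfand theory for $\B$ by $\widehat{\alpha(f)}=f\circ\dual{\theta}$, with surjectivity from density of the image (it contains $\theta(\L)$) plus closedness of $*$-homomorphic images --- this is precisely the alternative you sketch in your last sentence. Your main construction instead builds $\alpha$ directly on $\spn\{\widehat{s}\}$ and verifies well-definedness and contractivity by passing to the atoms of the finite Boolean subalgebra generated by the elements at hand; this is more elementary (no Gelfand duality for $\B$ is invoked) and yields the explicit norm estimate $\bigl\|\sum_j d_j\theta(a_j)\bigr\|\leq\max_j|d_j|=\|f\|_\infty$, at the cost of the atomic bookkeeping you describe. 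The uniqueness step (Stone--Weierstrass density of the span of the $\widehat{s}$) is the same in both proofs. Your closing caveat is well taken, and in fact applies to the paper's argument too: the paper's definition of representation literally requires only $\theta(s\wedge t)=\theta(s)\theta(t)$, which gives orthogonality of the atom images but neither $\theta(0)=0$ nor additivity over disjoint joins (the constant map $\theta\equiv 1$ is meet-preserving), and the paper's proof uses the stronger reading implicitly at the step ``$\rho\circ\theta\in\widehat{\L}$,'' since a point of the Stone space must preserve $0$, $1$, and joins. So both routes tacitly assume $\theta$ is a Boolean-algebra homomorphism into $\text{proj}(\B)$ --- as it is in the paper's applications, where $\theta$ arises from idempotents of an inverse monoid --- and with that reading adopted your argument is complete.
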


\begin{proof} By the definition of representation, $\theta(\L)$ is a
  commuting family of projections, and, since $\theta(\L)$ generates
  $\B$, $\B$ is abelian. For $\rho\in\widehat{\B}$,
  $\rho\circ\theta\in\widehat{\L}$.  Moreover, the dual map
  $\dual{\theta}:\hat{\B}\rightarrow\hat{\L}$ given by
  $\widehat{\B}\ni \rho\mapsto \rho\circ \theta$ is continuous.  Hence
  there is a $*$-homomorphism $\alpha:C(\hat{\L})\rightarrow \B$ given
  by  
\[\widehat{\alpha(f)}=f\circ\dual{\theta}.\]   For $s\in\L$ and
$\rho\in\widehat{\B}$, we have
\[\widehat{\alpha(\widehat{s})}(\rho)
=\widehat{s}(\rho\circ\theta)=\rho(\theta(s))=\widehat{\theta(s)}
(\rho),\] so that $\theta(s)=\alpha(\widehat{s})$.  Since $\theta(\L)$
generates $\B$, the image of $\alpha$ is dense in $\B$, whence
$\alpha$ is onto.

Suppose $\alpha_1:C(\widehat{\L})\rightarrow \B$ is another
$*$-epimorphism of $C(\widehat{\L})$ onto $\B$ such that
$\alpha_1(\widehat{s})=\theta(s)$ for every $s\in\L$.  Letting $\A$ be
the $*$-algebra generated by $\{\widehat{s}:s\in\L\}$, we find $\A$
separates points of $\widehat{\L}$ and contains the constant
functions.  The Stone-Weierstrass Theorem shows $\A$ is dense in
$C(\widehat{\L})$.  Since $\alpha_1|_\A=\alpha|_\A$, we conclude that
$\alpha_1=\alpha$.
\end{proof}

\subsection{Inverse semigroups}
We discuss some results and
definitions in the theory of inverse semigroups.  For a comprehensive
text on inverse semigroups, see Lawson \cite{LawsonInSe}.

A semigroup $\S$ is an \emph{inverse semigroup} if there is a unique
inverse operation on $\S$.  That is, for every $s\in\S$ there is a
unique element $s^\dag$ in $\S$ satisfying
\begin{equation*} ss^\dag s=s \text{ and } s^\dag ss^\dag =s^\dag.
\end{equation*} Two elements $s,t\in\S$ are \textit{orthogonal} if
$s^\dag t=t s^\dag=0$.  An inverse semigroup $\S$ is an
\textit{inverse monoid} if $\S$ has a multiplicative unit; we usually
denote the unit with the symbol $1$.

We denote the idempotents in $\S$ by $\E(\S)$.  The idempotents of an
inverse semigroup form an abelian inverse subsemigroup.  Further,
$\E(\S)$ determines the \textit{natural partial order} on $\S$: given
$s,t\in \S$, write $s\leq t$ if there is an idempotent $e\in\S$ such
that
\begin{equation*} s=te.
\end{equation*} 
We will often use the notation $(\S,\leq)$ when we ``forget'' the
multiplication on $\S$ and simply consider $\S$  as a set with
this natural partial order.

For $s,t\in\S$, we will use $s\wedge t$ for the greatest lower bound
of $\{s,t\}$, if it exists.  Likewise, $s\vee t$ will denote the least
upper bound.  In general inverse semigroups, $s\vee t$ and $s\meet t$
need not exist.  If for any $s,t\in \S$, $s\wedge t$ exists in $\S$,
$(\S,\leq)$ is a \textit{meet semilattice}.

Idempotents of the form $s^\dag t \wedge 1$ are called \emph{fixed
  point idempotents} by Leech \cite{LeechInMoWiANaSeOr}.  When
$(\S,\leq)$ is a meet semilattice, these are the idempotents which
define the meet operation on $\S$.

\begin{lemma}[Leech]\label{L: Leech} Suppose $\S$ is  an inverse
  monoid such that $(\S,\leq)$ is a meet semilattice. 
For any $s,t\in \S$, $s^\dag t
\wedge 1$ is the smallest idempotent $e$ such that
\begin{equation*} s\wedge t= se= te.
\end{equation*} In particular, $(s\wedge t)^\dag (s\wedge t)= s^\dag
t\wedge 1$.
\end{lemma}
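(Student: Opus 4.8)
The plan is to write $r := s\wedge t$ and $e_0 := s^\dag t\wedge 1$, and to prove three things in turn: (i) $e_0$ is an idempotent with $se_0=te_0=r$; (ii) $e_0 = r^\dag r$; and (iii) any idempotent $e$ with $se=te=r$ satisfies $e_0\le e$. Together (i) and (iii) give the minimality assertion, and (ii) is the ``in particular'' clause. Throughout I would invoke the standard facts about the natural partial order (see Lawson): idempotents commute and are self-inverse, an element is $\le 1$ exactly when it is idempotent, inversion is an order-automorphism of $(\S,\le)$ and hence preserves meets, the order is compatible with multiplication, and $x\le y$ iff $x = y\,x^\dag x$ iff $x = x\,x^\dag y$.

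The first move is essentially bookkeeping but carries the key insight. Since $e_0\le 1$, it is an idempotent; and applying the (meet-preserving) order-automorphism $\dag$ to $s^\dag t\wedge 1$ shows $e_0 = e_0^\dag = t^\dag s\wedge 1$. So although $e_0$ is defined asymmetrically in $s$ and $t$, it is in fact symmetric, and this is what makes the two one-sided estimates below dual to each other.

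Next I would establish $se_0=te_0=r$. From $e_0\le s^\dag t$ the characterization of $\le$ gives $e_0 = s^\dag t\,e_0$, and symmetrically $e_0 = t^\dag s\,e_0$. Hence $se_0 = (ss^\dag)\,te_0 \le te_0$ and $te_0 = (tt^\dag)\,se_0 \le se_0$, so $se_0 = te_0 =: u$; as $e_0$ is idempotent, $u\le s$ and $u\le t$, whence $u\le r$. For the reverse inequality I would produce the witness $r^\dag r$: from $r\le s$ we get $r^\dag\le s^\dag$, and combining with $r\le t$ via compatibility of $\le$ with multiplication yields $r^\dag r\le s^\dag t$; since also $r^\dag r\le 1$, we obtain $r^\dag r\le e_0$. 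Then
\[
u\,(r^\dag r) = s\,e_0\,(r^\dag r) = s\,(r^\dag r) = r,
\]
using $e_0(r^\dag r)=r^\dag r$ and $s(r^\dag r)=r$, so $r\le u$ and therefore $u=r$. The remaining claims are short: combining $e_0 = s^\dag t\,e_0 = s^\dag(te_0) = s^\dag r$ with $r^\dag r = (se_0)^\dag(se_0) = e_0\,s^\dag s\,e_0 = s^\dag s\,e_0$, and noting $s^\dag s\,e_0 = s^\dag s\,s^\dag r = s^\dag r = e_0$, gives $e_0 = r^\dag r$; and for any idempotent $e$ with $se=te=r$ we have $e_0 = r^\dag r = (se)^\dag(se) = s^\dag s\,e \le e$.

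I expect the crux to be the reverse inequality $r\le se_0$, that is, pinning $se_0$ down to $s\wedge t$ exactly rather than merely to some lower bound of $\{s,t\}$. The whole argument turns on having the correct witness idempotent $r^\dag r$ and on the estimate $r^\dag r\le e_0$, which in turn relies on the compatibility of the natural order with both multiplication and inversion. The symmetrization $e_0 = t^\dag s\wedge 1$ from the first step is what lets the two halves of the squeeze $se_0 = te_0$ be carried out identically; without it one would have to argue the two directions by different means.
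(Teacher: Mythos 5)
Your proof is correct, but there is nothing in the paper to compare it against: the paper states this lemma without proof, attributing it to Leech \cite{LeechInMoWiANaSeOr} (whose ``fixed point idempotents'' $s^\dag t\wedge 1$ it is quoting). So your argument should be judged on its own, and it holds up at every step. The symmetrization $e_0 := s^\dag t\wedge 1 = t^\dag s\wedge 1$ is legitimate because $\dag$ is an order automorphism of $(\S,\le)$ and the relevant meets exist by the semilattice hypothesis; the squeeze $se_0=(ss^\dag)te_0\le te_0$ and its dual correctly yield $se_0=te_0=:u$ with $u\le s\wedge t=:r$; and the reverse inequality via the witness $r^\dag r\le s^\dag t\wedge 1=e_0$ (from $r^\dag\le s^\dag$, $r\le t$, and compatibility of $\le$ with multiplication) together with $u(r^\dag r)=s(r^\dag r)=r$ is exactly right. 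The identification $e_0=s^\dag r$ and $r^\dag r=s^\dag s\,e_0=e_0$ gives the ``in particular'' clause, and minimality follows cleanly from $e_0=r^\dag r=(se)^\dag(se)=s^\dag s\,e\le e$ for any idempotent $e$ with $se=te=r$. You correctly identified the crux — without the witness $r^\dag r\le e_0$ one only gets that $se_0$ is \emph{some} common lower bound of $s$ and $t$, not the meet itself — and you use only the monoid hypothesis ($e_0\le 1$ making sense) plus standard facts about the natural partial order, which is precisely the scope of the lemma. This is a complete and self-contained substitute for the citation.
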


An inverse semigroup $\S$ is \emph{fundamental} if for $s,t\in\S$
\begin{equation*} ses^\dag=tet^\dag\text{ for all }e\in\E(\S)
\end{equation*} only when $s=t$.  Equivalently, $\S$ is fundamental if
the centralizer of $\E(\S)$ in $\S$ is $\E(\S)$.  An inverse semigroup
is \emph{Clifford} if $s^\dag s=ss^\dag$ for all $s\in \S$.
Fundamental and Clifford inverse semigroups play an important role in
the theory of inverse semigroups.  In fact, every inverse semigroup
can be described as the extension of a Clifford inverse semigroup by a
fundamental inverse semigroup.  We explain these concepts now.

Let $\S$ and $\P$ be two inverse semigroups, and let $\pi\colon
\P\rightarrow \E(\S)$ be a surjective homomorphism.  Suppose further
that $\pi|_{\E(\P)}$ is an isomorphism of $\E(\P)$ and
$\E(\S)$.  An inverse semigroup $\G$, together with a surjective
homomorphism $q\colon \G \rightarrow \S$, is an \emph{idempotent
separating extension of $\S$ by $\P$} if there is an embedding $\iota$
of $\P$ into $\G$ such that
\begin{enumerate}
\item $q(g)\in\E(\S)$ if and only if $g=\iota(p)$ for some $p\in\P$;
and
\item $q\circ\iota=\pi$.
\end{enumerate} 
Unless explicitly stated to the contrary, all extensions considered in
the sequel will be idempotent separating.  Thus, we will use the
phrase, `extension of $\S$ by $\P$,' instead of `idempotent separating
extension of $\S$ by $\P$' when discussing extensions.  Also, since
$q\circ \iota=\pi$, we will typically suppress the map $\pi$ and
describe an extension of $\S$ by $\P$ using the diagram,
\[\P\xhookrightarrow{\iota}\G\xrightarrow{q}\S.\]
The extension 
$\P\xhookrightarrow{\iota} \G\xrightarrow{q}\S$ is \textit{a trivial
  extension}
 if there exists a
semigroup homomorphism $j:\S\rightarrow \G$ such that $q\circ
j=\text{id}|_\S$.

We will sometimes identify $\P$ with $\iota(\P)$, so that $\iota$ becomes
the inclusion map.  When this identification is made, we delete 
$\iota$ from the diagram of the extension and simply write
\[\P\hookrightarrow\G\xrightarrow{q}\S.\]

We shall require a notion of equivalent extensions.  The following
definition is a modification of the definitions found
in~\cite{LauschCoInSe} and \cite{LawsonInSe}.    
\begin{definition}\label{gext}
For $i=1,2$ let $\S_i$ and $\P_i$ be inverse semigroups, and suppose
that $\tilde{\alpha}:\S_1\rightarrow \S_2$ and
$\underline{\alpha}:\P_1\rightarrow \P_2$ are fixed isomorphisms of inverse
semigroups. 
The extension
\begin{gather}\label{gext1} 
\P_1 \xhookrightarrow{\iota_1} \G_1\xrightarrow{q_1} \S_1 \\
\intertext{of $\S_1$ by $\P_1$ and the extension}
 \P_2 \xhookrightarrow{\iota_2} \G_2\xrightarrow{q_2} \S_2\label{gext2}
\end{gather} of $\S_2$ by $\P_2$ 
are \emph{$(\underline{\alpha},\tilde{\alpha})$-equivalent}  if there is an isomorphism $\alpha\colon
\G_1\rightarrow \G_2$ such that $q_2\circ
\alpha=\tilde{\alpha}\circ q_1$, and $\underline{\alpha}\circ
\iota_2=\iota_1\circ \alpha.$
\end{definition}
Notice that when the extensions~\eqref{gext1} and~\eqref{gext2} are
$(\underline{\alpha},\tilde{\alpha})$-equivalent, $\tilde{\alpha}\circ q_1\circ
\iota_1=q_2\circ\iota_2\circ\underline{\alpha}$, that is,
\[\tilde{\alpha}\circ\pi_1=\pi_2\circ\underline{\alpha}.\]

\begin{remark} Definition~\ref{gext} differs slightly from
  that given in Lausch~\cite{LauschCoInSe} and
  Lawson~\cite{LawsonInSe}.  These authors assume that $\P_1=\P_2$,
  $\S_1=\S_2$, and both $\tilde{\alpha}$ and $\underline{\alpha}$ are
  the identity maps.  While Definition~\ref{gext} is essentially the
  same as that given by Lausch and Lawson, it 
  enables us to streamline the statements of our main results.
\end{remark}

 In \cite{LauschCoInSe}, Lausch also
  shows that equivalence classes of extensions of inverse semigroups
  may be parametrized by elements of a 2-cohomology group.  Trivial
  extensions as defined above correspond to the neutral element of
  this cohomology group.

Another way to describe extensions of inverse semigroups is via
congruences.  Let $\G$ be an inverse semigroup.  An equivalence
relation $R$ on $\G$ is a \textit{congruence} if it behaves well under
products, that is,
\begin{equation*} (v_1,v_2),(w_1,w_2)\in R \text{ implies
}(v_1w_1,v_2w_2)\in R.
\end{equation*} The quotient of $\G$ by $R$ gives an inverse semigroup
$\S$.  Let $q:\G\rightarrow \S$ denote the quotient map.  Let
\begin{equation}\label{Pdef} \P = \{v\in \G \colon q(v)\in\E(\S) \}.
\end{equation} Then $\P$ is a inverse semigroup, and $\G$ is a
extension of $\S$ by $\P$.

The \textit{Munn congruence} $R_M$ on $\G$ is the congruence,
\[R_M:=\{(v_1,v_2)\in \G\times \G \colon v_1ev_1^\dag=v_2ev_2^\dag
\text{ for all } e\in\E(\G)\}.\] The Munn congruence is the maximal
idempotent separating congruence on $\G$ and the quotient of $\G$ by
$R_M$ is a fundamental inverse semigroup $\S$.  With $\P$ as in~\eqref{Pdef},
$\P$ is a Clifford inverse semigroup, and $\G$ is an idempotent
separating extension of $\S$ by $\P$.

We are interested in inverse monoids with a strong order structure.
Parts (a--c) in the definition below  may be found in Lawson
\cite{LawsonNoGeStDu}.
\begin{definition}\label{D: Boolean inv} An inverse monoid $\S$ with
$0$ is a \textit{Boolean inverse monoid} if
\begin{enumerate}
\item $(\E(\S),\leq)$ is a Boolean algebra;
\item $(\S, \leq)$ is a meet semilattice;
\item if $s,t\in\S$ are orthogonal, their join, $s \vee t$, exists in
$\S$.
\end{enumerate} In addition, we shall say $\S$ is a \textit{locally
complete Boolean inverse monoid} if $\E(\S)$ is a complete Boolean
algebra.  Finally, $\S$ is a \textit{complete Boolean inverse monoid}
if $\S$ satisfies the additional condition,
\begin{enumerate}[resume]
\item for every pairwise orthogonal family $S\subseteq \S$,
$\bigvee_{s\in S} s$ exists in $\S$.
\end{enumerate}
\end{definition}

\begin{remark} A complete Boolean inverse monoid is
necessarily locally complete, see \cite[Corollary~1,
p. 46]{GivantHalmosInBoAl}.
\end{remark}

\begin{example} At first glance, it may appear that local completeness
for a Boolean inverse monoid $\S$ might imply that $\S$ is actually
complete.  Here is an example showing this is not the case.  Let $\H$
be a Hilbert space with orthonormal basis $\{e_j\}_{j\in\bbN}$, and
let $\D$ be the set of all operators $T\in\bh$ for which each $e_j$ is
an eigenvector for $T$.  Let $\S$ be the inverse semigroup generated
by the projections in $\D$ and the rank-one partial isometries,
$\{e_ie_j^*\}_{i,j\in\bbN}$.  Then $\E(\S)$ is a complete Boolean
algebra, and $\{e_{j+1}e_j^*:j\in\bbN\}$ is a pairwise orthogonal family
in $\S$, yet $\bigvee_{j=1}^\infty e_{j+1}e_j^*\notin \S$.
\end{example}

Our main application of Proposition~\ref{P: universal1} is when $\S$
is a Boolean inverse monoid and $\L=\E(\S)$.  For $i=1,2$, let $\S_i$
be Boolean inverse monoids and let $\P_i$ be the inverse semigroup of
partial isometries in $\D_i:=C(\widehat{\E(\S_i)})$.  As in the proof
of Proposition~\ref{P: universal1}, any isomorphism $\theta$ of
$\E(\S_1)$ onto $\E(\S_2)$ uniquely determines a homeomorphism
$\dual{\theta}$ of $\widehat{\E(\S_2)}$ onto $\widehat{\E(\S_1)}$,
which in turn gives a $*$-isomorphism, $\ddual{\theta}$ of $\D_1$ onto
$\D_2$.  Define $\underline{\theta}:= \ddual{\theta}|_{\P_1}$.
Clearly, $\underline{\theta}$ is an isomorphism of $\P_1$ onto $\P_2$.

The map $\underline{\theta}$ allows us to specialize
Definition~\ref{gext} for extensions of Boolean inverse monoids. 

\begin{definition}\label{Tequiv}
For $i=1,2$, let $\S_i$ be Boolean inverse monoids and $\P_i$ be the
partial isometries in $C(\widehat{\E(\S_i)}).$   The extensions
\begin{gather}\label{Tequiv1} 
\P_1 \xhookrightarrow{\iota_1} \G_1\xrightarrow{q_1} \S_1 \\
\intertext{and}
 \P_2 \xhookrightarrow{\iota_2} \G_2\xrightarrow{q_2} \S_2\label{Tequiv2}
\end{gather} 
are \emph{equivalent} 
if there are isomorphisms
$\theta:\S_1\rightarrow \S_2$ and 
$\alpha\colon \G_1\rightarrow \G_2$ such that $q_2\circ
\alpha=\theta\circ q_1$, and
$\iota_2\circ\underline{\theta}=\alpha\circ\iota_1$.
In other
words, these extensions are equivalent 
if there is an isomorphism
$\theta:\S_1\rightarrow\S_2$ such that~\eqref{Tequiv1} is
$(\underline{\theta},\theta)$-equivalent to~\eqref{Tequiv2}.  
\end{definition}

 A \textit{partial homeomorphism} of a topological space
  $X$ is a homeomorphism between two open subsets of $X$. If $s_1$ and
  $s_2$ are partial homeomorphisms, their product $s_1s_2$ has domain
  $\dom(s_1)\cap \ran(s_2)$ and for $x\in X$,
  $(s_1s_2)(x)=s_1(s_2(x))$. 
  In the following proposition, whose
proof is left to the reader, $\O$ denotes the family of clopen subsets
of $\widehat{\E(\S)}$ and $\text{Inv}_\O$ will denote the inverse
semigroup of all partial homeomorphisms of $\widehat{\E(\S)}$ whose
domains and ranges belong to $\O$.
 \begin{proposition}\label{SactionD} Let $\S$ be a Boolean inverse
  monoid and $\D=C(\widehat{\E(\S)})$.  For $s\in \S$, the map
  $\E(\S)\ni e\mapsto s^\dag e s$ determines a partial homeomorphism
  $\bet_s$ of $\widehat{\E(\S)}$ with \[\dom(\beta_s)=
  \{\rho\in\widehat{\E(\S)} : \rho(s^\dag s)=1\}\dstext{and}
  \ran(\beta_s)=\{\rho\in\widehat{\E(\S)}: \rho(ss^\dag)=1\}\] as
  follows: for $e\in\E(\S)$ and $s\in\S$,
\[\beta_s(\rho)(e)=\rho(s^\dag e s).\] 
The map $s\mapsto \beta_s$ is a one-to-one inverse semigroup
homomorphism of $\S$ into the inverse semigroup $\text{Inv}_\O$.  
Moreover, $\beta_s$ determines a
partial action on $\D$:
  for $f\in\D$, define $s^\dag f s\in \D$ by
\[(s^\dag f s)(\rho):=f(\beta_s(\rho)).\]  In particular, when
$e\in\E(\S)$, $s^\dag \chi_{G_e} s= \chi_{G_{s^\dag es}}$.
\end{proposition}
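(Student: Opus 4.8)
The map $\beta$ is the Munn-style action of $\S$ on its Stone space $\widehat{\E(\S)}$ induced by conjugation of idempotents, so the plan is to check, in turn, that each $\beta_s$ is a well-defined partial homeomorphism with the asserted clopen domain and range, that $s\mapsto\beta_s$ is a homomorphism into $\oinv$, that it is injective, and finally to read off the partial action on $\D$. The first task is to record the purely algebraic facts about conjugation. Using that idempotents commute and that $ss^\dag s=s$, one checks that $s^\dag e s$ is again idempotent, that $s^\dag e s=s^\dag(e\wedge ss^\dag)s\le s^\dag s$, and that $e\mapsto s^\dag e s$ preserves meets, i.e. $s^\dag(ef)s=(s^\dag e s)(s^\dag f s)$. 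Since $f\mapsto sfs^\dag$ is the mutually inverse map, $\phi_s\colon e\mapsto s^\dag e s$ restricts to a Boolean-algebra isomorphism of $[0,ss^\dag]$ onto $[0,s^\dag s]$ (an order bijection with order-preserving inverse), and so respects relative complements and joins; distributivity in $\E(\S)$ then yields $s^\dag(e\vee f)s=(s^\dag e s)\vee(s^\dag f s)$ for all $e,f$. These identities drive everything that follows.

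Next, fix $\rho$ with $\rho(s^\dag s)=1$. Because $\rho$ is a character and $\phi_s$ is a Boolean isomorphism of the relevant intervals, the composite $e\mapsto\rho(s^\dag e s)$ preserves meets, joins, $0$ and $1$ (for $1$ one uses $s^\dag 1 s=s^\dag s$ together with $\rho(s^\dag s)=1$), so $\beta_s(\rho)$ is again a character; moreover $\beta_s(\rho)(ss^\dag)=\rho(s^\dag s)=1$, so $\beta_s$ carries $G_{s^\dag s}$ into $G_{ss^\dag}$. If instead $\rho(s^\dag s)=0$, then $s^\dag e s\le s^\dag s$ forces $\rho(s^\dag e s)=0$ for every $e$, so $\beta_s(\rho)$ is not a character; this pins down $\dom(\beta_s)=G_{s^\dag s}$, which is clopen by Stone's theorem. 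Continuity is then immediate from $\beta_s^{-1}(G_e)=\{\rho\in G_{s^\dag s}:\rho(s^\dag e s)=1\}=G_{s^\dag e s}$, again clopen.

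To obtain the homomorphism property and the homeomorphism claim at once, I would compute $\beta_s(\beta_t(\rho))(e)=\rho(t^\dag s^\dag e s t)=\rho\big((st)^\dag e\,(st)\big)=\beta_{st}(\rho)(e)$ using $(st)^\dag=t^\dag s^\dag$, and observe that the two domains coincide because $t^\dag s^\dag s t\le t^\dag t$ makes $G_{(st)^\dag(st)}$ equal to the set of $\rho$ with $\rho\in\dom(\beta_t)$ and $\beta_t(\rho)\in\dom(\beta_s)$. Thus $\beta_{st}=\beta_s\beta_t$ in $\oinv$. Specializing to an idempotent $e$ gives $\beta_e=\mathrm{id}_{G_e}$, so $\beta_{s^\dag}\beta_s=\beta_{s^\dag s}=\mathrm{id}_{G_{s^\dag s}}$ and $\beta_s\beta_{s^\dag}=\mathrm{id}_{G_{ss^\dag}}$; hence each $\beta_s$ is a homeomorphism of $G_{s^\dag s}$ onto $G_{ss^\dag}$ with continuous inverse $\beta_{s^\dag}$, i.e. a genuine element of $\oinv$, and $\beta$ is an inverse-semigroup homomorphism.

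The delicate point — and the step I expect to be the real obstacle — is injectivity. If $\beta_s=\beta_t$, comparing domains and ranges gives $s^\dag s=t^\dag t$ and $ss^\dag=tt^\dag$, and comparing values gives $\rho(s^\dag e s)=\rho(t^\dag e t)$ for every character $\rho$ (both sides vanish off $G_{s^\dag s}=G_{t^\dag t}$), whence $s^\dag e s=t^\dag e t$ for all $e\in\E(\S)$ since characters separate points of $\E(\S)$. This identity is exactly the fundamentality condition applied to $s^\dag$ and $t^\dag$, so the conclusion $s=t$ is precisely the assertion that $\S$ is fundamental; accordingly I would invoke fundamentality here (noting that the inverse monoids of interest, being monoids of partial isomorphisms, are fundamental, so that the Munn representation $\beta$ is faithful). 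Finally, for the partial action on $\D$, the prescription $(s^\dag f s)(\rho)=f(\beta_s(\rho))$ defines a continuous function on the clopen set $G_{s^\dag s}$ (extended by $0$ elsewhere), and for $f=\chi_{G_e}$ one computes $(s^\dag\chi_{G_e}s)(\rho)=1$ exactly when $\beta_s(\rho)(e)=1$, i.e. when $\rho(s^\dag e s)=1$, i.e. when $\rho\in G_{s^\dag e s}\subseteq G_{s^\dag s}$; this gives $s^\dag\chi_{G_e}s=\chi_{G_{s^\dag e s}}$, as asserted.
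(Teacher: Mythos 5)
The paper offers no proof to compare against: Proposition~\ref{SactionD} is explicitly ``left to the reader,'' so your argument stands on its own, and it is correct and complete. The conjugation identities (idempotence of $s^\dag e s$, the bound $s^\dag e s\le s^\dag s$, preservation of meets, and the Boolean isomorphism $[0,ss^\dag]\to[0,s^\dag s]$ with inverse $f\mapsto sfs^\dag$, which yields preservation of joins), the verification that $\beta_s(\rho)$ is a character exactly when $\rho(s^\dag s)=1$, the computation $\beta_s^{-1}(G_e)=G_{s^\dag es}$ giving continuity, the identity $\beta_{st}=\beta_s\beta_t$ including the matching of domains via $t^\dag s^\dag st\le t^\dag t$, the deduction that $\beta_{s^\dag}$ is the continuous inverse, and the characteristic-function formula are all sound.

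Your flag at the injectivity step is moreover a genuine catch, not a hedge. For a bare Boolean inverse monoid the map $s\mapsto\beta_s$ need \emph{not} be one-to-one: take $\S=G\cup\{0\}$, a nontrivial group with zero adjoined. Then $\E(\S)=\{0,1\}$ is the two-element Boolean algebra, $(\S,\leq)$ is a meet semilattice (distinct $g,h\in G$ have $g\wedge h=0$), and only trivial orthogonal pairs occur, so $\S$ satisfies Definition~\ref{D: Boolean inv}; yet $\beta_g=\mathrm{id}$ for every $g\in G$ because both idempotents are central. As you observe, $\beta_s=\beta_t$ yields exactly $s^\dag es=t^\dag et$ for all $e\in\E(\S)$ (characters separate points of $\E(\S)$ by Stone's theorem), and the passage from this to $s=t$ is precisely fundamentality applied to $s^\dag,t^\dag$. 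So the ``one-to-one'' clause should be read with fundamentality as an additional hypothesis; this is harmless for the paper's purposes, since the monoids to which the proposition is applied --- Cartan inverse monoids (fundamental by Definition~\ref{D: Cartan inv}) and the Munn quotients of Section~\ref{S: ext of a pair} --- are fundamental by construction, but as printed for arbitrary Boolean inverse monoids the injectivity assertion is too strong, and your proof correctly isolates the one place where the extra hypothesis enters.
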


\hyphenation{pre-sently}

\begin{definition}\label{D: Cartan inv} We call an inverse semigroup
$\S$ a \emph{Cartan inverse monoid} if
	\begin{enumerate}
	\item $\S$ is fundamental;
	\item $\S$ is a complete Boolean inverse monoid; and
	\item the character space $\widehat{\E(\S)}$ of the complete
Boolean lattice $\E(\S)$ is a hyperstonean topological space.
	\end{enumerate}
\end{definition}

The choice of name ``Cartan" for these inverse monoids will become
clear presently.  For now we note that condition (c) in
Definition~\ref{D: Cartan inv} tells us that if $\S$ is a Cartan
inverse monoid, then the lattice of idempotents $\E(\S)$ is isomorphic
to the lattice of projections in some abelian von Neumann algebra
\cite[Theorem III.1.18]{TakesakiThOpAlI}.

\begin{remark}  We  emphasize that for two extensions of Cartan
  inverse monoids, equivalence is  always to be taken in the sense of
  Definition~\ref{Tequiv}.
\end{remark}

\begin{remark}  Recall that a 
   \textit{pseudogroup} is an inverse
  semigroup $\S$ of partial homeomorphisms of a topological space $X$.
  By a theorem of V. Vagner~\cite{VagnerOnThAnGp} (or see
  \cite[Section~5.2, Theorem~10]{LawsonInSe}), an inverse semigroup
  $\S$ if fundamental if and only if $\S$ is isomorphic to a
  topologically complete pseudogroup $\T$ consisting of partial
  homeomorphisms of a $T_0$ space $X$; recall that $\T$ is
  \textit{topologically complete} if the family $\{\dom(t): t\in \T\}$
  is a basis for the topology on $X$.

  An application of Vagner's theorem yields a slightly different
  description of Cartan inverse monoids: $\S$ is a Cartan inverse
  monoid if and only if $\S$ is isomorphic to a pseudogroup $\T$ on a
  hyperstonean topological space $X$ such that:
\begin{enumerate}
\item  $\{\dom(t): t\in\T\}=\{E\subseteq
X: E \text{ is clopen}\}$; and 
\item if $\{t_\alpha: \alpha\in
\bbA\}\subseteq \T$ is such that the two families $\{\dom(t_\alpha):
\alpha\in\bbA\}$ and $\{\ran(t_\alpha): \alpha\in\bbA\}$ are each
pairwise disjoint, then there exists $t\in\T$ such that for each
$\alpha\in\bbA$, $t|_{\dom(t_\alpha)}=t_\alpha$. 
\end{enumerate}
Proposition~\ref{SactionD} can be used to produce the
isomorphism. 
\end{remark}

\section{From Cartan MASAs to extensions of inverse
semigroups}\label{S: ext of a pair}

Our goal of this section is to show that every Cartan pair
$(\M,\D)$ uniquely determines an exact sequence of inverse semigroups.
As we will see, these inverse semigroups will be Cartan inverse
monoids.  In Section~\ref{S: Cpair} we show the converse: given an
extension of a Cartan inverse monoid by a natural choice of inverse
semigroup, we can construct a Cartan pair.  Cartan inverse monoids
will play a role analogous to measured equivalence relations of
Feldman-Moore \cite{FeldmanMooreErEqReI,FeldmanMooreErEqReII}.

Let $\M$ be a von Neumann algebra.  Let $\D$ be a MASA (maximal
abelian subalgebra) of $\M$.  The \emph{normalizers of $\D$ in $\M$}
are the elements $x\in\M$ such that
\begin{equation*} x\D x^*\subseteq\D \text{ and } x^*\D x\subseteq\D.
\end{equation*} If a partial isometry $v\in\M$ is a normalizer, then
we call $v$ a \emph{groupoid normalizer}.  The collection of all
groupoid normalizers of $\D$ in $\M$ is denoted by $\G\N(\M,\D)$.  It
is not hard to show that $\G\N(\M,\D)$ is an inverse semigroup with
the adjoint serving as the inverse operation.  The idempotents in the
inverse semigroup $\G\N(\M,\D)$ are the projections in $\D$.

\begin{definition}
A MASA $\D$ in the von Neumann algebra $\M$ is \emph{Cartan} if
\begin{enumerate}
\item there exists a faithful, normal conditional expectation $E$ from
$\M$ onto $\D$;
\item the set of groupoid normalizers $\G\N(\M,\D)$ spans a weak-$*$
dense subset of $\M$.
\end{enumerate} If $\D$ is a Cartan MASA in $\M$, we call the pair $(\M,\D)$
a \emph{Cartan pair}.
\end{definition}

\begin{remark} A MASA $\D$ is usually defined to be Cartan if
it satisfies condition (a) above, and if the unitary groupoid
normalizers of $\D$ in $\M$ span a weak-$*$ dense subset of $\M$.
This is equivalent to the definition given above.   A
proof of the equivalence can be found in \cite[inclusion 2.8, p.\
479]{CameronPittsZarikianBiCaMASAvNAlNoAlMeTh}.
\end{remark}

Let $(\M,\D)$ be a Cartan pair.  Let $\G=\G\N(\M,\D)$ and let
$\P=\G\cap \D$.  Note that $\P$ is the set of all partial isometries
in $\D$.  Thus $\P$ and $\G$ are inverse semigroups with same set of
idempotents, which is the set of projections in $\D$.   That is,
\begin{equation*} \E(\P) = \E(\G) = \textrm{Proj}(\D).
\end{equation*} Let $R_M$ be the Munn congruence on $\G$, let $\S$ be
the quotient of $\G$ by $R_M$, and let
\begin{equation*} q \colon \G \rightarrow \S
\end{equation*} be the quotient map.  It follows that $q|_{\E(G)}$ is
a complete lattice isomorphism from the idempotents of $\G$ onto the
idempotents of $\S$, so
\begin{equation*} \textrm{Proj}(\D)=\E(\P) = \E(\G) \simeq \E(\S).
\end{equation*}
   
\begin{lemma}\label{L: RinP} Let $v\in\G$.  Then $q(v)\in\E(\S)$ if
and only if $v\in\P$.  Thus, $\G$ is an idempotent separating
extension of $\S$ by $\P$.
\end{lemma}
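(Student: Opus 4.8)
The plan is to show both implications of the biconditional, since the extension conclusion then follows immediately from the discussion preceding the lemma (where it was observed that $q|_{\E(\G)}$ is a lattice isomorphism onto $\E(\S)$, which supplies the map $\pi$ and verifies that $\pi|_{\E(\P)}$ is an isomorphism). The direction $v \in \P \implies q(v) \in \E(\S)$ is the easy one: every element of $\P = \G \cap \D$ is a partial isometry in the abelian algebra $\D$, hence is normal with $v^*v = vv^*$, so $v$ is already an idempotent of $\G$ in the sense that it is $\leq$-below a projection; more directly, $v$ being a partial isometry in a MASA means $v$ and $v^*$ commute with all of $\E(\G) = \mathrm{Proj}(\D)$, and one checks $v e v^* = v v^* e$ for every $e \in \E(\G)$, so the Munn congruence class of $v$ behaves like that of the projection $vv^*$. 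I would make this precise by showing $q(v) = q(vv^*) \in \E(\S)$, using that $q$ is a homomorphism and $vv^* \in \E(\G)$.

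The substantive direction is the converse: $q(v) \in \E(\S) \implies v \in \P$. Here I would unwind the definition of the Munn congruence $R_M$. Saying $q(v)$ is idempotent means $q(v) = q(v)^2 = q(v^2)$, but more usefully, since the idempotents of $\S$ are exactly $q(\E(\G))$ and $q$ is idempotent-separating, $q(v) \in \E(\S)$ forces $(v, e) \in R_M$ for some projection $e \in \E(\G)$; in fact I expect $(v, vv^*) \in R_M$. By definition of $R_M$ this means
\[
v\, f\, v^\dag = (vv^*)\, f\, (vv^*)^\dag \quad \text{for all } f \in \E(\G).
\]
Since $\E(\G) = \mathrm{Proj}(\D)$ and $\D$ is abelian, the right-hand side is $vv^* f$. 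The goal is to leverage the identity $v f v^* = v v^* f$ (valid for all projections $f \in \D$) to conclude that $v$ itself commutes with $\D$, whence $v \in \D' \cap \M = \D$ by maximality of the MASA, giving $v \in \G \cap \D = \P$.

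The main obstacle — and the step I would spend the most care on — is passing from the relation $v f v^* = v v^* f$ on all projections $f$ of $\D$ to the conclusion $v \in \D$. The natural route is: this identity says $\Ad_v$ acts trivially on $\D$ in the appropriate sense, i.e. $v f = f v$ for all projections $f$, which extends by linearity and weak-$*$ continuity (projections generate $\D$ as a von Neumann algebra) to $v d = d v$ for all $d \in \D$. Then $v \in \D' \cap \M$; since $\D$ is a MASA, $\D' \cap \M = \D$, so $v \in \D$, and being a partial isometry in $\G$ places it in $\P$. I would need to verify carefully that $v f v^* = v v^* f$ genuinely yields $vf = fv$ rather than merely a twisted commutation — the cleanest argument computes $v f = v f (v^* v)$ and uses $vfv^* = vv^* f$ together with the polar/partial-isometry relations $v = vv^*v$, multiplying on the right by $v$ to get $vfv^*v = vv^* f v$, i.e. $v(f v^*v) = (vv^*)(fv)$, and then reconciling the support projections; this algebraic manipulation is where an error could hide, so I would double-check it against the fact that $v^*v, vv^* \in \E(\G)$ are themselves among the $f$'s being quantified over.
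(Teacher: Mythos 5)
Your proof is correct and takes essentially the same route as the paper: both directions run through the Munn congruence, with $(v,vv^\dag)\in R_M$ giving $vfv^*=vv^*f$ for every projection $f\in\D$, from which your right-multiplication-by-$v$ computation does close (it is the same identity as the paper's chain $vf=(vfv^\dag)v=(efe)v=fev=fv$ with $e=vv^\dag$), followed by density of the projections in $\D$ and maximality of the MASA to get $v\in\D'\cap\M=\D$, hence $v\in\P$. The easy direction, $(v,vv^\dag)\in R_M$ for $v\in\P$ so that $q(v)=q(vv^\dag)\in\E(\S)$, is also exactly the paper's argument.
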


\begin{proof} Suppose that $q(v)\in\E(\S)$.  This means that $v$ is
equivalent to an idempotent $e\in\E(\G)$, that is, $(v,e)\in R_M$ for
some $e\in\E(\G)$.  Since $vIv^\dag = eIe^\dag=e$, for any $f\in\P$ we
have $vf=(vfv^\dag)v=(efe)v=fev=fv$.  It follows that $v$ commutes with
$\D$, and since $\D$ is a MASA in $\M$, we obtain $v\in \P$.

Conversely, if $v\in\P$, then $(v, vv^\dag)\in R_M$.
\end{proof}

\begin{definition} Let $(\M,\D)$ be a Cartan pair.  We call the
extension
\[ \P \hookrightarrow \G\xrightarrow{q} \S\] constructed above the
\textit{extension} for the Cartan pair $(\M,\D)$.
\end{definition}

\begin{proposition}\label{P: S Cartan} Let $\P \hookrightarrow \G\xrightarrow{q} \S$ be the extension for a
  Cartan pair $(\M, \D)$. Then $\S$ is a Cartan
  inverse monoid.
\end{proposition}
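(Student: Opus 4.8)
The plan is to verify the three defining conditions of a Cartan inverse monoid (Definition~\ref{D: Cartan inv}) for $\S$: fundamentality, the complete Boolean inverse monoid axioms of Definition~\ref{D: Boolean inv}, and hyperstoneanness of $\widehat{\E(\S)}$.

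First I would dispose of three of these quickly. Fundamentality holds because $\S$ is by construction the quotient of $\G$ by the Munn congruence $R_M$, and such quotients are always fundamental. Next, $q|_{\E(\G)}$ is a complete lattice isomorphism of $\E(\G)=\operatorname{Proj}(\D)$ onto $\E(\S)$; since $\D$ is an abelian von Neumann algebra, $\operatorname{Proj}(\D)$ is a complete Boolean algebra, so $\E(\S)$ is a complete Boolean algebra, giving condition (a) of Definition~\ref{D: Boolean inv} and local completeness. Finally, $\widehat{\E(\S)}\cong\widehat{\operatorname{Proj}(\D)}$ is the spectrum of an abelian von Neumann algebra, hence hyperstonean by \cite[Theorem~III.1.18]{TakesakiThOpAlI}, giving condition (c) of Definition~\ref{D: Cartan inv}.

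The substance is the meet-semilattice and (complete) orthogonal-join axioms. I would first establish these on $\G=\G\N(\M,\D)$ itself, where the von Neumann algebra structure is available. For meets: given $v,w\in\G$, completeness of $\operatorname{Proj}(\D)$ yields a largest projection $z\in\D$ with $vz=wz$, and one checks $v\wedge w=vz$. For orthogonal joins: if $\{v_\alpha\}\subseteq\G$ is pairwise orthogonal, then the initial and final projections of the $v_\alpha$ are pairwise orthogonal, the sum $v=\sotlim\sum_\alpha v_\alpha$ is a partial isometry in $\M$, and a short computation shows the off-diagonal terms $v_\alpha d v_\beta^*$ vanish for $\alpha\neq\beta$, so that $vdv^*=\sotlim\sum_\alpha v_\alpha d v_\alpha^*\in\D$; thus $v\in\G$ and $v=\bigvee_\alpha v_\alpha$. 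Hence $\G$ is itself a complete Boolean inverse monoid.

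The main obstacle is transferring this structure across the Munn quotient $q\colon\G\to\S$, which does \emph{not} preserve meets: the meet of $q(v),q(w)$ in $\S$ is governed by where $v$ and $w$ agree modulo $R_M$, not where they are literally equal. To handle meets I would consider $Z=\{z\in\operatorname{Proj}(\D):(vz,wz)\in R_M\}$; using that each conjugation map $d\mapsto vdv^*$ is normal and that $\operatorname{Proj}(\D)$ is complete, $Z$ is downward closed and closed under arbitrary joins, so it has a largest element $z_0$, and one verifies $q(v)\wedge q(w)=q(vz_0)$, the idempotent-separating property supplying the comparison of representatives. For joins the key points are that $q^{-1}(0)=\{0\}$ (an immediate consequence of idempotent separation), so every pairwise orthogonal family in $\S$ lifts to a pairwise orthogonal family in $\G$, and that $q$ preserves orthogonal joins. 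The latter reduces to a determination lemma, which is where fundamentality is essential: if $a,b\in\S$ have a common initial projection $\bigvee_\alpha f_\alpha$ with the $f_\alpha$ pairwise orthogonal and $af_\alpha=bf_\alpha$ for all $\alpha$, then $a=b$. I would prove this using the faithful action $s\mapsto\beta_s$ of Proposition~\ref{SactionD}: the partial homeomorphisms $\beta_a$ and $\beta_b$ agree on each clopen set $G_{f_\alpha}$, hence on the dense open set $\bigcup_\alpha G_{f_\alpha}$ of their common domain $G_{\bigvee_\alpha f_\alpha}$, and therefore everywhere by continuity and Hausdorffness, so $a=b$. Granting the lemma, any orthogonal family in $\S$ lifts, its join in $\G$ exists by the previous paragraph, and its image is the required join in $\S$; this yields conditions (b), (c), and (d) of Definition~\ref{D: Boolean inv} and completes the proof.
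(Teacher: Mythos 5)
Your proposal is correct, and while the routine parts coincide with the paper's proof (fundamentality is automatic for the Munn quotient; $\E(\S)\simeq\operatorname{Proj}(\D)$ gives the complete Boolean algebra and the hyperstonean spectrum), the two substantive verifications are done by a genuinely different route. For meets, the paper stays entirely downstairs: it invokes Leech's theorem \cite[Theorem~1.9]{LeechInMoWiANaSeOr}, setting $f=\bigvee\{e\in\E(\S)\colon e\leq st^\dag\}$ and reading off $s\wedge t=fs=ft$ from completeness of $\E(\S)$. You instead first make $\G$ itself a complete Boolean inverse monoid using the von Neumann algebra structure and then transfer meets across the Munn quotient; your verification that $Z=\{z\colon (vz,wz)\in R_M\}$ is downward closed and closed under arbitrary joins (finite joins by algebra, directed joins by normality) is sound, and the identity $q(v)\wedge q(w)=q(vz_0)$ checks out because $q(x)=q(y)$ exactly when $(x,y)\in R_M$. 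For orthogonal joins, both proofs lift to a pairwise orthogonal family in $\G$ (your observation $q^{-1}(0)=\{0\}$ is precisely why the lifts are orthogonal) and form the \textsc{sot}-convergent sum; the proofs diverge at leastness of $r=q(w)$. The paper simply asserts $r=r'(r^\dag r)$ for any upper bound $r'$ --- in the paper's economy this is justifiable from the meet already established, since $s\leq r\wedge r'$ for all $s$ forces $(r\wedge r')^\dag(r\wedge r')=r^\dag r$ and hence $r\wedge r'=r$ --- whereas you supply an explicit determination lemma via the Stone-space action of Proposition~\ref{SactionD}: $\beta_a$ and $\beta_b$ agree on each $G_{f_\alpha}$, hence on the dense open subset $\bigcup_\alpha G_{f_\alpha}$ of $G_{\bigvee_\alpha f_\alpha}$, hence everywhere. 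That lemma does exactly what is needed (for an upper bound $t$, apply it to $a=r$ and $b=t\bigvee_\alpha f_\alpha$), and you are right to flag that injectivity of $s\mapsto\beta_s$ genuinely requires fundamentality --- as stated for arbitrary Boolean inverse monoids it can fail (consider a nontrivial group with zero adjoined), but your $\S$ is fundamental, so the application is legitimate. Net effect: the paper's argument is shorter by citing Leech; yours is more self-contained at the semigroup level, makes the von Neumann algebra input explicit, and in the join step actually writes out a justification the paper leaves implicit.
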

\begin{proof} By construction, $\S$ is a fundamental inverse
  semigroup.  Since $\E(\G)$ is the projection lattice of an abelian
  von Neumann algebra, it is a complete Boolean algebra and
  $\widehat{\E(\G)}$ is hyperstonean.  Since $\E(\S)$ is isomorphic to
  $\E(\G)$, $\widehat{\E(\S)}$ is also hyperstonean.

We use \cite[Theorem 1.9]{LeechInMoWiANaSeOr} to show that $\S$ is a
meet semilattice.  Indeed, given $s,t\in \S$, let
\begin{align*} f&=\bigvee\{e\in\E(\S) \colon e\leq st^{\dag}\}\\
&=\bigvee\{e\in\E(\S) \colon e\leq ts^{\dag}\}.
\end{align*} As $\E(\S)$ is a complete lattice, we have that $f$
exists in $\E(\S)$.  We then have $s\wedge t=ft=fs$, so $\S$ is a meet
semi-lattice.

Finally, suppose that $S\subseteq \S$ is a pairwise orthogonal family.
For each $s\in S$, let $v_s\in\G$ satisfy $q(v_s)=s$.  Then $\{v_s:
s\in S\}$ is a family in $\G$ such that for any $s,t\in S$ with $s\neq
t$, $v_s^*v_t=v_sv_t^*=0$.  Then the range projections, $\{v_sv_s^*:
s\in S\}$ are a pairwise orthogonal family; likewise, the source
projections $\{v_s^*v_s: s\in S\}$ are pairwise orthogonal.  Therefore,
$\sum_{s\in S} v_s$ converges strongly in $\M$ to an element $w\in\G$.
Put $r:=q(w)$.  Applying $q$ to each side of the equality, $w
(v_s^*v_s)= v_s$ yields $r\geq s$ for every $s\in S$.  Notice also
that $r^\dag r=\bigvee \{s^\dag s: s\in S\}$.  Hence if $r'\in \S$ and
$r'\geq s $ for every $s\in S$, then $r'{}^\dag r' \geq r^\dag r$. Then
$r=r' (r^\dag r)$, that is, $r'\geq r$.  Thus, $r$ is the least upper
bound for $S$.  It follows that $\S$ is a complete Boolean inverse
monoid.  This completes the proof.

\end{proof}

Our goal now is to show that Cartan pairs uniquely determine their
extensions.

\begin{definition}\label{Miso} For $i=1,2$ let $(\M_i,\D_i)$ be Cartan pairs.  An
\textit{isomorphism} from $(\M_1,\D_1)$ to $(\M_2,\D_2)$ is a
$*$-isomorphism $\theta:\M_1\rightarrow \M_2$ such that
$\theta(\D_1)=\D_2$.
\end{definition}
\begin{remark} For $i=1,2$, let $(X_i,\mu_i)$ be probability
  spaces and suppose $\Gamma_i$ are countable discrete groups acting
  freely and ergodically on $(X_i,\mu_i)$ so that each element of
  $\Gamma_i$ is measure preserving.  Put $\M_i=L^\infty(X_i)\rtimes
  \Gamma_i$ and $\D_i=L^\infty(X_i)\subseteq \M_i$.  Then
  $(\M_i,\D_i)$ are Cartan pairs.  In this context, equivalence in the
  sense of Definition~\ref{Miso} is often called orbit equivalence.
\end{remark}

\begin{theorem}\label{T: same data} For $i=1,2$, suppose $(\M_i,\D_i)$
are Cartan pairs, with associated extensions
\[\P_i\hookrightarrow \G_i\xrightarrow{q_i} \S_i.\] Then $(\M_1,\D_1)$
and $(\M_2,\D_2)$ are isomorphic Cartan pairs if and only if their
associated extensions are equivalent.  Furthermore, when the
extensions are equivalent and $(\M_i,\D_i)$ are in standard form, the
isomorphism is implemented by a unitary operator.
\end{theorem}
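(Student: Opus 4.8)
The plan is to prove the equivalence as a two-way implication, with the substantive content (equivalent extensions $\Rightarrow$ isomorphic pairs) produced by a GNS/standard-form construction driven by the inverse-semigroup isomorphism $\alpha$, from which the unitary implementation falls out automatically. Throughout write $\theta_\D\colon\D_1\to\D_2$ for the $*$-isomorphism induced by an idempotent isomorphism as in the discussion preceding Definition~\ref{Tequiv}, so that $\underline{\theta_\S}=\theta_\D|_{\P_1}$.

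For the easy direction, suppose $\theta\colon\M_1\to\M_2$ is a $*$-isomorphism with $\theta(\D_1)=\D_2$. First I would check that $\theta$ carries groupoid normalizers to groupoid normalizers (from $\theta(x\D_1x^*)=\theta(x)\D_2\theta(x)^*$), so it restricts to an inverse-semigroup isomorphism $\alpha:=\theta|_{\G_1}\colon\G_1\to\G_2$ with $\alpha(\P_1)=\theta(\G_1\cap\D_1)=\G_2\cap\D_2=\P_2$. Because the Munn congruence is defined intrinsically through the action on $\E(\G_i)$ and $\alpha$ is a $\dag$-preserving isomorphism carrying $\E(\G_1)$ onto $\E(\G_2)$, it descends to an isomorphism $\theta_\S\colon\S_1\to\S_2$ with $q_2\circ\alpha=\theta_\S\circ q_1$. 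The only point needing care is matching $\alpha|_{\P_1}$ with the canonical $\underline{\theta_\S}$ of Definition~\ref{Tequiv}: the projection-level isomorphism underlying $\theta_\D$ is $\theta_\S|_{\E(\S_1)}$, which is induced by $\alpha|_{\E(\G_1)}=\theta|_{\mathrm{Proj}(\D_1)}$; since a $*$-isomorphism of abelian von Neumann algebras is determined by its action on projections, $\theta_\D=\theta|_{\D_1}$ and hence $\underline{\theta_\S}=\theta|_{\P_1}=\alpha\circ\iota_1$. Thus the extensions are $(\underline{\theta_\S},\theta_\S)$-equivalent.

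For the converse, suppose the extensions are equivalent via $\theta_\S\colon\S_1\to\S_2$ and $\alpha\colon\G_1\to\G_2$ with $q_2\circ\alpha=\theta_\S\circ q_1$ and $\alpha\circ\iota_1=\iota_2\circ\underline{\theta_\S}$. Fix a faithful normal semifinite weight $\omega_1$ on $\D_1$ and set $\omega_2:=\omega_1\circ\theta_\D^{-1}$, so $\omega_2\circ\theta_\D=\omega_1$; put $\phi_i:=\omega_i\circ E_i$, a faithful normal semifinite weight on $\M_i$, and let $\H_i=L^2(\M_i,\phi_i)$ be the resulting standard form with $v\mapsto\hat v$ the embedding of square-integrable elements. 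The crux is the identity
\[ E_2(\alpha(u))=\theta_\D\bigl(E_1(u)\bigr)\qquad (u\in\G_1). \]
To prove it I would use that the conditional expectation of a normalizer is its restriction to the fixed-point projection, $E_i(u)=u\,p_u$, where $p_u\in\mathrm{Proj}(\D_i)$ corresponds to the fixed-point idempotent $q_i(u)\wedge 1$ (an idempotent since any element $\leq 1$ is idempotent), so $E_i(u)\in\P_i$ by Lemma~\ref{L: RinP}; cf.~\cite{CameronPittsZarikianBiCaMASAvNAlNoAlMeTh}. Since $\theta_\S$ preserves meets and the unit, $\alpha$ carries $p_u$ to the projection for $q_2(\alpha(u))\wedge1$, and $\alpha|_{\P_1}=\underline{\theta_\S}=\theta_\D|_{\P_1}$, which gives the identity. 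Combined with $\omega_2\circ\theta_\D=\omega_1$ it yields, for square-integrable $v,w\in\G_1$,
\[ \langle\widehat{\alpha(v)},\widehat{\alpha(w)}\rangle_{\H_2}=\omega_2\bigl(E_2(\alpha(w^\dag v))\bigr)=\omega_1\bigl(E_1(w^\dag v)\bigr)=\langle\hat v,\hat w\rangle_{\H_1}. \]

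Since the square-integrable normalizers span dense subspaces of $\H_1$ and $\H_2$ (using weak-$*$ density of $\mathrm{span}\,\G_i$ in $\M_i$), the map $\hat v\mapsto\widehat{\alpha(v)}$ extends to a unitary $U\colon\H_1\to\H_2$. A short computation on the dense set, $UvU^*\widehat{\alpha(w)}=U\widehat{vw}=\widehat{\alpha(vw)}=\alpha(v)\widehat{\alpha(w)}$, shows $UvU^*=\alpha(v)$ for every $v\in\G_1$; by weak-$*$ continuity of $\Ad(U)$ and weak-$*$ density of the spans, $\theta:=\Ad(U)$ maps $\M_1$ onto $\M_2$ and restricts to $\alpha$ on $\G_1$, while $\theta(\D_1)=\D_2$ follows from $\theta(\P_1)=\alpha(\P_1)=\P_2$ together with weak-$*$ density of $\mathrm{span}\,\P_i$ in $\D_i$. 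Thus $\theta$ is an isomorphism of Cartan pairs implemented by the unitary $U$, which is the ``furthermore'' assertion. I expect the main obstacle to be the intrinsic description of the conditional expectation, i.e.\ proving that $E_i(u)=u\,p_u$ is computed purely from the fixed-point idempotent and the embedding $\iota_i$, so that it is transported correctly by $\alpha$; the non-$\sigma$-finite bookkeeping (working with a faithful normal semifinite weight rather than a state, and restricting to square-integrable normalizers while retaining density) is routine but must be handled carefully.
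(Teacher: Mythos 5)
Your proposal is correct and follows essentially the same route as the paper: the forward direction by restricting the pair isomorphism to groupoid normalizers and descending through the Munn congruence, and the converse by transporting a faithful normal semifinite weight via $\theta_\D$, establishing the intertwining identity $E_2\circ\alpha=\theta_\D\circ E_1$ on $\G_1$, and building the unitary on the GNS (semi-cyclic) spaces with $\theta=\operatorname{Ad}(U)$, exactly as in the text. The only variation is cosmetic: you ground the intertwining identity in the fixed-point idempotent, $E_1(v)=v\,p_v$ with $p_v$ the projection corresponding to $q_1(v)\wedge 1$, whereas the paper proves $E_1(v)=ve$ with $e=\bigvee\{f\in\E(\G_1):vf=fv\in\P_1\}$; these agree (one checks $p_v\leq e$ and $ve=vp_v$), so your flagged ``main obstacle'' is precisely the step the paper's ideal argument (or the cited result of Cameron--Pitts--Zarikian) supplies.
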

\begin{proof}
An isomorphism of Cartan pairs restricts to an
isomorphism of $\G\N(\M_1,\D_1)$ onto $\G\N(\M_2,\D_2)$.  The fact
that the extensions associated to $(\M_1,\D_1)$ and $(\M_2,\D_2)$ are
equivalent follows
easily from their construction.

  Suppose now that the extensions are equivalent.  Let
$\alpha:\G_1\rightarrow\G_2$ and $\tilde{\alpha}:\S_1\rightarrow \S_2$ be
inverse semigroup isomorphisms such that
\[ \tilde{\alpha}\circ q_1= q_2\circ \alpha.\] By examining the image
of $\E(\S_1)$ under $\tilde{\alpha}$, we find that the isomorphism
$\underline{\tilde{\alpha}}$ of $\P_1$ onto $\P_2$ induced by
$\tilde{\alpha}$ is $\alpha|_{\P_1}$.  Thus by Definition~\ref{Tequiv},
$\alpha|_{\P_1}$ is the restriction of a $*$-isomorphism, again called
$\alpha$, of the von Neumann algebra $\D_1$ onto the von Neumann
algebra $\D_2$.

Let $E_i:\M_i\rightarrow\D_i$ be the conditional expectations.  We
claim that
\begin{equation*}\label{Eintertwine} 
(\alpha\circ E_1)|_{\G_1}=E_2\circ \alpha.
\end{equation*} To see this, fix $v\in\G_1$, and 
let $\fJ:=\{d\in\D_1: vd=dv\in\D_1\}$.  Then $\fJ$ is a weak-$*$
closed ideal of $\D_1$.  Therefore, there exists a projection
$e\in\P_1$ such that $\fJ=e\D_1$.  In fact, 
\[e=\bigvee \{f\in \E(\G_1): vf=fv\in\P_1\}.\] Since $E_1(v^*)v$ and
$vE_1(v^*)$ both commute with $\D$, they belong to $\D$; hence
$E_1(v^*)\in\fJ$.  As $\fJ$ is closed under the adjoint operation,
$E_1(v)\in\fJ$.  Therefore, there exists $h\in\D_1$ such that
$E_1(v)=eh$.  It now follows that $E_1(v)=ve$.  Since $\alpha$ is an
isomorphism, we find $\alpha(e) =\bigvee\alpha(\fJ)=\bigvee\{
f_2\in\E(\G_2): \alpha(v)f_2=f_2\alpha(v)\in \P_2\}$.  Hence
$E_2(\alpha(v))=\alpha(v)\alpha(e)$ and the claim holds.

Fix a faithful normal semi-finite weight $\psi_1$ on $\D_1$.  Use
$\alpha$ to move $\psi_1$ to a weight on $\D_2$, that is,
$\psi_2=\psi_1\circ \alpha^{-1}$.  Putting $\phi_i=\psi_i\circ E_i$,
we see $\phi_i$ are faithful semi-finite normal weights on $\M_i$.  Let
$(\pi_i, \fH_i, \eta_i)$ be the associated semi-cyclic representations
(the notation is as in~\cite{TakesakiThOpAlII}) 
and let $\fn_i:=\{x\in \M_i: \phi_i(x^*x)<\infty\}$.  By
\cite[Corollary~1.4.2]{CameronPittsZarikianBiCaMASAvNAlNoAlMeTh},
$\spn(\eta_i(\G_i\cap \fn_i))$ is dense in $\fH_i$.

Let $n\in\bbN$ and suppose $v_1,\dots, v_n\in\G_1\cap \fn_1$ and
$c_1,\dots , c_n\in\bbC$.  Since $(\alpha\circ E_1)|_{\G_1}=E_2\circ
\alpha$, it follows from the definition of $\phi_2$ that
$\alpha(v_j)\in \G_2\cap \fn_2$ and
\begin{align*} \phi_2\left(\left(\sum_{i=1}^n
c_i\alpha(v_i)\right)^*\left(\sum_{i=1}^n c_i\alpha(v_i)\right)\right)
&=\sum_{i,j=1}^n\overline{c_i}c_j\phi_2(\alpha(v_i^*v_j)) \\
&=\sum_{i,j=1}^n\overline{c_i}c_j\phi_1(v_i^*v_j) \\ &=
\phi_1\left(\left(\sum_{i=1}^n c_iv_i\right)^*\left(\sum_{i=1}^n
c_iv_i\right)\right).
\end{align*} Hence the map
\[\eta_1\left(\sum_{i=1}^n c_iv_i\right) \mapsto
\eta_2\left(\sum_{i=1}^n c_i\alpha(v_i) \right)\] extends to a unitary
operator $U:\fH_1\rightarrow \fH_2$.  It is routine to verify that for
$v\in \G_1$, $U\pi_1(v)=\pi_2(\alpha(v))U$.  Therefore the map
$\theta:\M_1\rightarrow \M_2$ given by $\theta(x)=
\pi_2^{-1}(U\pi_1(x)U^*)$ is an isomorphism of $(\M_1,\D_1)$ onto
$(\M_2,\D_2).$
\end{proof}

Let $\M$ be a von Neumann algebra, and let $\D$ be a MASA in $\M$.
Even if $(\M,\D)$ is not a Cartan pair, one can define $\G$, $\P$ and
$\S$ as above to get an extension related to the pair $(\M,\D)$.  The
inverse monoid $\S$ will again be a Cartan inverse monoid.  However,
if $\D$ is not a Cartan MASA in $\M$, the equivalence class of the
extension
\[ \P \hookrightarrow \G\xrightarrow{q} \S\] may arise from a
Cartan pair $(\M_1,\D_1)$ for which $\M$ and $\M_1$ are not
isomorphic.  

\begin{prop} Let $\M$ be a von Neumann algebra and let $\D$ be a MASA
in $\M$.  Then the pair $(\M,\D)$ determines an idempotent separating
exact sequence of inverse semigroups
	\[ \P \hookrightarrow \G\xrightarrow{q} \S,\] where
$\G=\G\N(\M,\D)$, $\P=\G\cap\D$ and $\S$ is a Cartan inverse monoid.
\end{prop}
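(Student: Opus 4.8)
The plan is to reuse, essentially verbatim, the two results already established for Cartan pairs---Lemma~\ref{L: RinP} and Proposition~\ref{P: S Cartan}---after observing that neither of their proofs invokes the Cartan hypotheses (the existence of a faithful normal conditional expectation $E\colon\M\to\D$, or the weak-$*$ density of $\G\N(\M,\D)$). Both proofs use only that $\D$ is a MASA in the von Neumann algebra $\M$. First I would recall from the discussion preceding Definition~\ref{D: Cartan inv} that $\G=\G\N(\M,\D)$ is an inverse semigroup under the adjoint, with $\E(\G)=\mathrm{Proj}(\D)$, and that $\P=\G\cap\D$ is precisely the set of partial isometries in $\D$, so that $\E(\P)=\E(\G)=\mathrm{Proj}(\D)$. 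Forming the Munn congruence $R_M$ on $\G$, I set $\S:=\G/R_M$ and let $q\colon\G\to\S$ be the quotient map; general inverse semigroup theory then gives that $\S$ is fundamental and that $q|_{\E(\G)}$ is a complete lattice isomorphism onto $\E(\S)$.

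Next I would verify that $\P\hookrightarrow\G\xrightarrow{q}\S$ is an idempotent separating extension with $\P=\G\cap\D$. Munn's theory already makes $\G$ an idempotent separating extension of $\S$ by the inverse subsemigroup $\{v\in\G:q(v)\in\E(\S)\}$ of~\eqref{Pdef}; the content to be checked is that this subsemigroup coincides with $\G\cap\D$. This is exactly Lemma~\ref{L: RinP}, whose proof uses only that an element of $\M$ commuting with $\D$ must lie in $\D$, the defining property of a MASA. Hence $\P\hookrightarrow\G\xrightarrow{q}\S$ is the desired exact sequence.

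It then remains to show $\S$ is a Cartan inverse monoid, that is, to verify the three conditions of Definition~\ref{D: Cartan inv}, and for this I would follow the proof of Proposition~\ref{P: S Cartan} line by line. Fundamentality is immediate since $\S$ is a Munn quotient. For the hyperstonean condition, $\E(\G)=\mathrm{Proj}(\D)$ is the projection lattice of the abelian von Neumann algebra $\D$, hence a complete Boolean algebra with hyperstonean character space; transporting along the isomorphism $\E(\S)\simeq\E(\G)$ shows $\widehat{\E(\S)}$ is hyperstonean. That $(\S,\leq)$ is a meet semilattice follows from Leech's criterion (Lemma~\ref{L: Leech} and~\cite[Theorem~1.9]{LeechInMoWiANaSeOr}) together with completeness of $\E(\S)$: the join $f=\bv\{e\in\E(\S):e\leq st^\dag\}$ exists and yields $s\wedge t=ft=fs$. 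Finally, for completeness, given a pairwise orthogonal family $S\subseteq\S$ I would lift each $s$ to $v_s\in\G$, note the ranges $\{v_sv_s^*\}$ and sources $\{v_s^*v_s\}$ are pairwise orthogonal, and conclude $\sum_{s\in S}v_s$ converges strongly in $\M$ to a partial isometry $w$, whose image $q(w)$ is the least upper bound of $S$.

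The only point needing genuine care---the analogue of a ``main obstacle''---is this last step: one must check that the strong limit $w=\sum_{s\in S}v_s$ again lies in $\G=\G\N(\M,\D)$. This is where the von Neumann algebra structure of $\M$ is essential, but no Cartan hypothesis intervenes. Strong closure of $\M$ gives $w\in\M$; the orthogonality of the ranges and sources makes $w$ a partial isometry; and for $d\in\D$ the off-diagonal terms of $wdw^*=\sum_{s,t}v_sdv_t^*$ vanish, since for $s\neq t$ the orthogonal source projections give $(v_s^\dag v_s)\,d\,(v_t^\dag v_t)=0$, so $wdw^*=\sum_s v_sdv_s^*\in\D$ by strong closure of $\D$, and likewise $w^*dw\in\D$. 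Thus $w\in\G$, and the entire argument is independent of the conditional expectation and the density assumption. Consequently $\S$ is a Cartan inverse monoid exactly as in Proposition~\ref{P: S Cartan}, and the proposition follows.
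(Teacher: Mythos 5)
Your proposal is correct and takes exactly the route the paper intends: the proposition is stated without a separate proof precisely because the arguments of Lemma~\ref{L: RinP} and Proposition~\ref{P: S Cartan} use only the MASA property of $\D$ (never the conditional expectation or the weak-$*$ density of $\G\N(\M,\D)$), which is what you verify. Your explicit check that the strong limit $w=\sum_{s\in S}v_s$ is again a groupoid normalizer---via the vanishing of the cross terms $v_s d v_t^*$ and the strong closure of $\D$---correctly fills in the one step the paper leaves implicit.
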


\section{Representing an extension}\label{S: representation}

The goal of this section is to develop a representation for extensions
of Boolean inverse monoids suitable for the construction of a Cartan
pair from a given extension of a Cartan inverse monoid.  Given an
extension of a Boolean inverse monoid $\S$
\begin{equation*} \P \hookrightarrow \G \xrightarrow{q} \S
\end{equation*} we will ultimately represent $\G$ by partial
isometries acting on a Hilbert space.  This will be achieved after
several intermediate steps, each of which is interesting in its own
right.

A Boolean inverse monoid $\S$ has sufficiently rich order structure to
allow the construction of a representation theory of $\S$ as
isometries on a right $\D$-module constructed from the order
structure; as usual, $\D=C(\widehat{\E(\S)})$.  A key tool in moving from
representations of $\S$ to representations of the extension $\G$ of
$\S$ by $\P$ is the existence of an order preserving sections
$j:\S\rightarrow \G$ which splits the exact sequence of ordered
spaces, $(\P,\leq)\hookrightarrow (\G,\leq)\xrightarrow{q} (\S,\leq)$.
Such sections are discussed in Subsection~\ref{Ss: orderpres}.

In Subsection~\ref{Ss: repo ker} we construct the right Hilbert
$\D$-module $\fA$ mentioned above.  The module $\fA$ will have a
reproducing kernel structure, with the lattice structure of $\S$
represented as $\D$-evaluation maps in $\fA$.

Finally, in Subsection~\ref{Ss: G rep}, we represent $\G$ as partial
isometries in the adjointable operators on $\fA$.  The existence of
the order preserving section plays an important role here.

We alert the reader that because we will we using the theory of right
Hilbert modules, all inner products, either scalar-valued or
$\D$-valued, will be conjugate linear in the first variable.

\subsection{Order preserving sections}\label{Ss: orderpres}

Let $\S$ be a Boolean inverse monoid, let $\P$ be the partial
isometries of $C(\widehat{\E(\S)})$ and let
$\P\hookrightarrow\G\xrightarrow{q}\S$ be an extension.  Since $q$ is
onto, it has a \textit{section}, that is, a map $j:\S\rightarrow \G$
such that $q\circ j=\text{id}|_\S$.  Notice that since $\S$ is
fundamental, $j$ is one-to-one.  Our interest is in those
sections which preserve order.  When the extension is trivial, a
splitting map may be taken to be a semigroup homomorphism, which is
order preserving.  The main result of this subsection is that when
$\S$ is locally complete, then every extension of $\S$ by $\P$,
trivial or not, has an order preserving section.

\begin{definition} Let $\P\hookrightarrow\G\xrightarrow{q}\S$ be an
extension.  We will call a section $j:\S\rightarrow \G$ for $q$ an
\textit{order preserving section for $q$} if
\begin{enumerate}
	\item $j(1)=1$, and
	\item $j(s)\leq j(t)$ for every $s, t\in\S$ with  $s\leq t$.
\end{enumerate}
\end{definition}

\begin{lemma}\label{L: orderpres} Let $j:\S\rightarrow \G$ be a
section for $q$.  The following statements are equivalent. 
\begin{enumerate}
\item The map $j$ is an
order preserving section for $q$.
\item  For every
$e,f\in\E(\S)$ and $s\in\S$,
\begin{equation*} j(esf)=j(e)j(s)j(f).
\end{equation*}
\item For every $s, t\in\S$, $j(s\wedge t)=j(s)\wedge j(t)$ and $j(1)=1$.
\end{enumerate}
\end{lemma}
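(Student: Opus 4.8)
The plan is to establish the cycle $(c)\Rightarrow(a)\Rightarrow(b)\Rightarrow(c)$. Since the extension is idempotent separating, $q$ restricts to a Boolean-algebra isomorphism of $\E(\G)$ onto $\E(\S)$; write $\epsilon\colon\E(\S)\to\E(\G)$ for its inverse, the canonical idempotent lift. Two elementary facts will be used repeatedly. First, an element $g\in\G$ satisfies $g\le 1$ if and only if $g\in\E(\G)$, since $g\le 1$ means $g=1\cdot i=i$ for some idempotent $i$. Second, for any section $j$ and any $s\in\S$ the projections $j(s)^\dag j(s)$ and $j(s)j(s)^\dag$ are idempotents of $\G$ lying over $s^\dag s$ and $ss^\dag$, so they are forced to equal $\epsilon(s^\dag s)$ and $\epsilon(ss^\dag)$.

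The implication $(c)\Rightarrow(a)$ is immediate: if $s\le t$ then $s\wedge t=s$, so $(c)$ gives $j(s)=j(s\wedge t)=j(s)\wedge j(t)\le j(t)$, and with $j(1)=1$ this is $(a)$. For $(a)\Rightarrow(b)$ I would first note that $(a)$ determines $j$ on idempotents: for $e\in\E(\S)$ we have $e\le 1$, hence $j(e)\le j(1)=1$, so $j(e)\in\E(\G)$ and therefore $j(e)=\epsilon(e)$. I would then prove the one-sided rules $j(es)=j(e)j(s)$ and $j(sf)=j(s)j(f)$. Indeed $es\le s$, so monotonicity gives $j(es)\le j(s)$, whence $j(es)=\bigl(j(es)j(es)^\dag\bigr)j(s)=\epsilon(ess^\dag)j(s)$; since $j(s)=\epsilon(ss^\dag)j(s)$ and $\epsilon$ is multiplicative on idempotents, $\epsilon(ess^\dag)j(s)=\epsilon(e)j(s)=j(e)j(s)$, and the factorization $j(sf)=j(s)j(f)$ is proved symmetrically using the source projection of $j(sf)$. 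Combining, $j(esf)=j(es)j(f)=j(e)j(s)j(f)$, which is $(b)$.

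The implication $(b)\Rightarrow(c)$ is the heart of the lemma, and recovering the normalization $j(1)=1$ from the bare multiplicative identity is the step I expect to be the main obstacle. Setting $e=f=1$ in $(b)$ yields only $j(s)=uj(s)u$ for all $s$, where $u:=j(1)$; taking $s=1$ shows $u=u^3$, and since $u$ lies over $1\in\E(\S)$ its source and range projections both equal $1_\G$, so $u$ is a self-adjoint unitary in $\P$ with $u^2=1_\G$ rather than $u=1_\G$ outright. The crux is to upgrade this to $u=1_\G$, equivalently to show that $j$ carries idempotents to idempotents. The natural line of attack is to exploit that $u$ lies in $\P$, the commutative (Clifford) inverse semigroup of partial isometries of $\D=C(\widehat{\E(\S)})$, that $u=uj(s)u^\dag j(s)^\dag$ forces $u$ to commute with the generating set $j(\S)\cup\P$ of $\G$, and the order and fundamentality structure of the extension, in order to rule out a nontrivial central self-adjoint unitary and conclude $u=1_\G$.

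Once $j(1)=1$ is secured, the rest of $(b)\Rightarrow(c)$ should follow smoothly. Taking $f=1$ (respectively $e=1$) in $(b)$ now gives the one-sided rules $j(es)=j(e)j(s)$ and $j(sf)=j(s)j(f)$, from which $j$ is monotone, and the meet formula is obtained by showing $j(s\wedge t)$ is the greatest lower bound of $j(s)$ and $j(t)$: it is a lower bound by monotonicity, and if $w\le j(s),j(t)$ then, writing $p:=w^\dag w=\epsilon(e)$, one gets $j(s)\epsilon(e)=w=j(t)\epsilon(e)$, i.e. $j(se)=j(te)$, hence $se=te$ by injectivity of $j$; thus $se\le s\wedge t$ and $w=j(se)\le j(s\wedge t)$, giving $j(s)\wedge j(t)=j(s\wedge t)$ and completing $(c)$.
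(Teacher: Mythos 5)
Your implications $(c)\Rightarrow(a)$ and $(a)\Rightarrow(b)$ are correct and essentially the paper's own arguments (your $(a)\Rightarrow(b)$ is a mild streamlining of the paper's computation, resting on the same two facts: $j|_{\E(\S)}$ must be the idempotent lift $\epsilon=(q|_{\E(\G)})^{-1}$, and $x\leq y$ iff $x=xx^\dag y$). The genuine gap is exactly where you flagged it, in $(b)\Rightarrow(c)$: you do not prove $j(1)=1$, you only propose a ``line of attack,'' and that attack cannot succeed, because $(b)$ simply does not imply $j(1)=1$. Take $\S=\{0,1\}$ (a Cartan inverse monoid), so $\D=\bbC$ and $\P=\G=\bbT\cup\{0\}$ with $q$ collapsing $\bbT$ to $1$; the section $j(1)=-1$, $j(0)=0$ satisfies $(b)$ --- both sides vanish unless $e=s=f=1$, and $(-1)^3=-1$ --- yet $j(1)\neq 1$. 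More generally, whenever $\iota(-1)$ is central in $\G$ (e.g.\ for the extension of any Cartan pair, where it is the scalar $-I$ in $\G\N(\M,\D)$), the perturbation $j:=\iota(-1)\,j_0$ of any order preserving section $j_0$ still satisfies $(b)$, since the three factors contribute $(-1)^3=-1$, while $j(1)=\iota(-1)\neq 1$. Your hope of excluding a nontrivial central self-adjoint unitary via fundamentality is vain: fundamentality constrains the centralizer of $\E(\S)$ \emph{in $\S$}, and $q(u)=1$ is already idempotent, so the symmetry $u=\iota(-1)$ is invisible to it. Indeed your own computation $j(e)=u\epsilon(e)$ with $u=j(1)$ a central symmetry shows that this is the exact extent of what $(b)$ yields.

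What you have uncovered is in fact a flaw in the lemma as stated and in the paper's own proof: the paper's $(b)\Rightarrow(c)$ opens by asserting that $(b)$ forces $j(e)\in\E(\G)$ for all $e\in\E(\S)$, but $(b)$ only gives $j(e)=j(e)^3$, i.e.\ a self-adjoint partial symmetry ($\pm 1$-valued on its support), not a projection --- precisely the oversight you detected. The statement is repaired by adding the normalization $j(1)=1$ to item $(b)$; then $j(e)=j(e)j(1)j(e)=j(e)^2$, so $j|_{\E(\S)}=\epsilon$, and your closing paragraph --- the one-sided rules, monotonicity, and the argument that any lower bound $w\leq j(s),j(t)$ equals $j(se)$ with $se=te\leq s\wedge t$ (injectivity of $j$ holds for \emph{any} section, since $q\circ j=\mathrm{id}_\S$) --- correctly completes $(b)\Rightarrow(c)$; it runs parallel to the paper's argument via Leech's lemma, and is in one respect cleaner, since you exhibit $j(s\wedge t)$ directly as the greatest lower bound rather than presupposing that $j(s)\wedge j(t)$ exists in $\G$. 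The amendment costs nothing downstream: the sections constructed in Proposition~\ref{P: order per sec} satisfy $j(1)=1$ by construction, and all later uses of the lemma go through unchanged.
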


\begin{proof} $(a)\Rightarrow (b)$. 
Suppose $j$ is an order preserving section.  For any
$e\in\E(\S)$, $e\leq 1$, so $j(e)\leq 1$, hence $j(e)\in\E(\G)$.
Since $q|_{\E(\G)}$ is an isomorphism of $\E(\G)$ onto $\E(\S)$ and
$q\circ j=\text{id}|_\S$, we obtain
$j|_{\E(\S)}=\left(q|_{\E(\G)}\right)^{-1}.$

For $s\in\S$, it follows that
\begin{equation*} j(s)^\dag j(s)=j(s^\dag s),
\end{equation*} because $q(j(s)^\dag j(s))= s^\dag s= q(j(s^\dag s))$.

Now suppose that $s\in\S$ and $e\in\E(\S)$.  Then $j(se)=j(se)
\left(j(se)^\dag j(se)\right)=j(se)j(s^\dag s)j(e).$ Multiply both
sides of this equality on the right by $j(e)$ to obtain
$j(se)=j(se)j(e)$.  Since $se\leq s$, the hypothesis on $j$ gives
\[j(se)=j(se)j(e)\leq j(s)j(e).\] Hence
\[j(se)=j(s)j(e) \left(j(se)^\dag j(se)\right)=j(s)j(s^\dag s)j(e)
=j(s)j(s)^\dag j(s) j(e)=j(s)j(e);\] where the first equality follows
from~\cite[p.~21,~Lemma~6]{LawsonInSe}.

Similar considerations yield $j(es)=j(e)j(s)$ for every $e\in\E(\S)$
and $s\in\S$.  Thus $j(esf)=j(e)j(s)j(f)$.

$(b)\Rightarrow (c)$. 
 Suppose $j(esf)=j(e)j(s)j(f)$
  for all $e,f\in\E(\S)$ and $s\in\S$.  Then for any $e\in\E(\S)$,
  $j(e)\in\E(\G)$.  Since $j$ is a section for $q$, we find $j(1)=1$.
  Notice also that $j|_{\E(\S)}$ is an isomorphism of $(\E(\S),\leq)$
  onto $(\E(\G),\leq)$.  
By Lemma~\ref{L: Leech}, for $s, t\in\S$,
\[ j(s)j(s^\dag t\wedge 1)=j(s\wedge t)=j(t)j(t^\dag s\wedge 1).\]
Therefore, 
$j(s\wedge t)\leq j(s)\wedge j(t).$  To obtain the reverse inequality,
let  $e\in \E(\S)$ be the unique idempotent such that 
$j(e)=(j(s)\wedge j(t))^\dag (j(s)\wedge
j(t))$.  Using~\cite[Lemma~6, p.\ 21]{LawsonInSe}, we find 
\[j(se)=j(s)\wedge j(t)=j(te).\] Since  $j$ is
one-to-one,  $te=se$.  But then $se \leq s\wedge t$.  Applying $j$
to this inequality gives
\[j(s)\wedge j(t)=j(te)\leq j(s\wedge t),\] and~(c) follows.

$(c)\Rightarrow (a)$.
Suppose $s, t\in \S$ with $s\leq t$.
Then $s\wedge t=s$, so $j(s)=j(s\wedge t)=j(s)\wedge j(t)\leq j(t)$.

\end{proof}

It follows immediately that order preserving sections also preserve
the inverse operation.

\begin{corollary}\label{C: orderpres} Let $j$ be an order preserving
section.  Then for all $s\in\S$,
	\begin{equation*} j(s^\dag)=j(s)^\dag.
	\end{equation*}
\end{corollary}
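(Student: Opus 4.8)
The plan is to use the characterization of order preserving sections established in Lemma~\ref{L: orderpres}, specifically the equivalence $(a)\Leftrightarrow(b)$, together with the compatibility of $j$ with the involution on idempotents. The statement $j(s^\dag)=j(s)^\dag$ asserts that $j$ intertwines the inverse operation on $\S$ with the inverse (adjoint) operation on $\G$, which is a natural consequence of the multiplicative behavior of $j$ on the left and right by idempotents.

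First I would record the basic facts about $j$ on idempotents that are already extracted in the proof of Lemma~\ref{L: orderpres}: since $j$ is order preserving, $j|_{\E(\S)}=(q|_{\E(\G)})^{-1}$ is an isomorphism of $(\E(\S),\leq)$ onto $(\E(\G),\leq)$, and in particular $j(e)\in\E(\G)$ is self-adjoint for every $e\in\E(\S)$. I would also recall the identity $j(s)^\dag j(s)=j(s^\dag s)$, which was shown to follow from applying $q$ and using that $q$ is a homomorphism with $q\circ j=\mathrm{id}$; by a symmetric argument one gets $j(s)j(s)^\dag=j(ss^\dag)$ as well.

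The key step is then a short computation. Writing $s=s(s^\dag s)$ and applying part~$(b)$ of Lemma~\ref{L: orderpres}, I would compute $j(s)^\dag$ and $j(s^\dag)$ and show they agree. Concretely, $j(s^\dag)=j(s^\dag(ss^\dag))=j(s^\dag)j(ss^\dag)=j(s^\dag)\,j(s)j(s)^\dag$. On the other hand, $j(s)^\dag=(j(s)j(s)^\dag)^\dag j(s)^\dag \cdot$(something) — more cleanly, I would use that in any inverse semigroup the element $j(s)^\dag$ is the unique inverse of $j(s)$, so it suffices to verify that $j(s^\dag)$ satisfies the two defining inverse relations $j(s)j(s^\dag)j(s)=j(s)$ and $j(s^\dag)j(s)j(s^\dag)=j(s^\dag)$. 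The first reduces, via $(b)$, to $j(s\cdot s^\dag s)=j(s)$, i.e. $j(s)=j(s)$ after using $s(s^\dag s)=s$ and the idempotent multiplicativity; the second is symmetric. By uniqueness of inverses in the inverse semigroup $\G$, this forces $j(s^\dag)=j(s)^\dag$.

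I do not expect any serious obstacle here; the corollary is essentially a formal consequence of $(b)$ and the uniqueness of inverses. The only point requiring minor care is bookkeeping with the idempotent factors $s^\dag s$ and $ss^\dag$ under $j$, ensuring that each application of part~$(b)$ has an idempotent on the appropriate side. The cleanest route is the uniqueness-of-inverse argument, since it avoids tracking both $\wedge$-relations and relies only on the two relations $ss^\dag s=s$ and $s^\dag ss^\dag=s^\dag$ pushed through $j$ via $(b)$.
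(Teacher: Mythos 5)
Your framing via uniqueness of inverses in $\G$ is fine, but the decisive verification cannot be carried out, and this is where your proof has a genuine gap. You claim that $j(s)j(s^\dag)j(s)=j(s)$ ``reduces, via (b), to $j(s\cdot s^\dag s)=j(s)$.'' Part (b) of Lemma~\ref{L: orderpres} only asserts $j(esf)=j(e)j(s)j(f)$ for \emph{idempotents} $e,f\in\E(\S)$, and in the product $j(s)j(s^\dag)j(s)$ none of the outer factors has the form $j(e)$ with $e$ idempotent. What order preservation actually yields is $j(s)^\dag j(s)=j(s^\dag s)$ and $j(s)j(s)^\dag=j(ss^\dag)$ (both sides are idempotents of $\G$ with the same image under $q$, and $q|_{\E(\G)}$ is injective); it yields no multiplicativity on the pair $(s,s^\dag)$. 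Indeed, setting $p:=j(s^\dag)j(s)$, these identities show only that $p\in\P$ with $p^\dag p=pp^\dag=j(s^\dag s)$, i.e.\ $p$ is a unimodular element of $\D$ supported on $G_{s^\dag s}$, and nothing forces $p$ to be the idempotent $j(s^\dag s)$ itself. The relation $j(s)j(s^\dag)j(s)=j(s)$ is in fact \emph{equivalent} to the corollary, so your argument is circular exactly where content is needed.

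Worse, the step is not merely unjustified but false for general order preserving sections. Let $\S$ be the symmetric inverse monoid on $\{x,y\}$ (a Cartan inverse monoid), let $\G$ be the monomial partial isometries in $M_2(\bbC)$ with $q$ the map onto the underlying partial bijection, and let $\P$ be the diagonal partial isometries. Lift idempotents canonically, set $j(g)=\begin{pmatrix}0&1\\ i&0\end{pmatrix}$ for the transposition $g$, and put $j(ge):=j(g)j(e)$ for $e\in\E(\S)$. Since the only nontrivial order relations in $\S$ lie below $g$ and below $1$, this $j$ is an order preserving section; yet $j(g^\dag)=j(g)\neq j(g)^\dag$, $j(g^\dag)j(g)=iI\neq j(g^\dag g)$, and $j(g)j(g^\dag)j(g)=ij(g)\neq j(g)$, so both inverse relations you propose to check fail. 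You are in good company: the paper's own one-line proof founders on the same point, as its third equality silently uses $j(s^\dag)j(s)=j(s^\dag s)$, which fails in this example. To obtain the corollary one must either build $\dag$-compatibility into the section --- e.g.\ impose $j(s^\dag)=j(s)^\dag$ as an extra requirement and arrange it in the construction of Proposition~\ref{P: order per sec} by choosing the lifts on the transversal $\B$ compatibly with $\dag$ --- or otherwise strengthen the hypotheses; it does not follow formally from Lemma~\ref{L: orderpres}.
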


\begin{proof} As
\begin{equation*} j(s)^\dag j(s)=j(s^\dag s),
\end{equation*} it follows that
\begin{equation*} j(s^\dag)^\dag = j(s) ( j(s)^\dag j(s^\dag)^\dag) =
j(s) (j(s^\dag)j(s))^\dag = j(s)j(s^\dag s)= j(s). \qedhere
\end{equation*}
\end{proof}

\begin{remark} Order
preserving sections are implicit in the work of Feldman and Moore.
Indeed if $\S$ is the Cartan inverse monoid consisting of all partial
Borel isomorphisms $\phi$ of the standard Borel space $(X,\B)$
whose graph, $\Gr(\phi)$, is contained in the measured equivalence relation $R$,
then the map $\phi\mapsto \chi_{\Gr(\phi)}$ is an order preserving
section of $\S$ into the inverse semigroup of groupoid normalizers of
the Cartan pair $(M(R,c),D(R,c))$ constructed by Feldman and Moore.
\end{remark}

Note that Lemma~\ref{L: orderpres} and Corollary~\ref{C: orderpres}
hold for extensions of arbitrary Boolean inverse monoids.  We do not
know whether order preserving sections exist in general.  However,
Proposition~\ref{P: order per sec} below shows that when $\S$ is a
locally complete Boolean inverse monoid, such sections always exist.

\begin{lemma}\label{L: idemiso} Let $\S$ be an inverse monoid with
  $0$ such that  
$(\E(\S),\leq)$ is a Boolean algebra.
Let $s\in\S$, let $A_s:=\{t\in\S: t\leq s\}$ and let $B_s:=\{e\in\E(\S):
e\leq s^\dag s\}$.  Then $(A_s,\leq)$ and $(B_s,\leq)$ are Boolean
algebras and the map $\tau_s$ given by $A_s\ni t\mapsto s^\dag t$ is a
complete order isomorphism of $(A_s,\leq)$ onto $(B_s,\leq)$.
\end{lemma}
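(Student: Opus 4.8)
The plan is to realize $\tau_s$ as an order isomorphism of $(A_s,\leq)$ onto $(B_s,\leq)$, to observe that $B_s$ is the interval $[0,s^\dag s]$ in the Boolean algebra $\E(\S)$ and hence is itself a Boolean algebra, and then to transport that structure back across $\tau_s$ to conclude $(A_s,\leq)$ is Boolean. Throughout I would lean on the standard characterizations of the natural partial order (see \cite[p.~21,~Lemma~6]{LawsonInSe}): for $t\leq s$ one has $t=s\,t^\dag t=ss^\dag t$, for idempotents $e\leq f$ means $e=ef$, and a product of idempotents is again an idempotent.

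First I would verify that $\tau_s$ maps $A_s$ into $B_s$ and is a bijection. For $t\in A_s$, writing $t=s\,t^\dag t$ gives $\tau_s(t)=s^\dag t=(s^\dag s)(t^\dag t)$, a product of two idempotents, so $\tau_s(t)\in\E(\S)$ and $\tau_s(t)\leq s^\dag s$; thus $\tau_s(t)\in B_s$. The key step is to produce the explicit inverse $B_s\ni e\mapsto se$: if $e\leq s^\dag s$ then $se\in A_s$ and $s^\dag(se)=(s^\dag s)e=e$, while for $t\in A_s$ the identity $t=ss^\dag t$ yields $s(s^\dag t)=t$. Hence $\tau_s$ and $e\mapsto se$ are mutually inverse bijections.

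It remains to check that both maps preserve order, after which the Boolean structure follows formally. If $t_1\leq t_2$ in $A_s$ then $t_1=t_2\,t_1^\dag t_1$, so $\tau_s(t_1)=(s^\dag t_2)(t_1^\dag t_1)\leq s^\dag t_2=\tau_s(t_2)$; and if $e_1\leq e_2$ in $B_s$ then $se_1=(se_2)e_1\leq se_2$. Thus $\tau_s$ is an order isomorphism, and since an order isomorphism automatically preserves every existing join and meet, it is in fact a complete order isomorphism. Finally, $B_s=[0,s^\dag s]$ is a principal ideal of the Boolean algebra $\E(\S)$ and so is a Boolean algebra in its own right (with top element $s^\dag s$ and the induced operations); transporting this structure across $\tau_s$ shows $(A_s,\leq)$ is Boolean as well. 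The only delicate point is the bookkeeping with the order identities---especially invoking $t=ss^\dag t$ to get surjectivity of $\tau_s$ and $t=s\,t^\dag t$ to keep $\tau_s(t)$ inside $\E(\S)$---so I would isolate those identities at the start and reuse them; the rest is formal.
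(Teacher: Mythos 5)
Your proof is correct and matches the paper's approach: the paper dismisses this lemma as ``routine'' after noting the single identity $se_1\wedge se_2=s(e_1\wedge e_2)$ for $e_1,e_2\in\E(\S)$, and your argument is exactly that routine verification, carried out via the mutually inverse order-preserving maps $t\mapsto s^\dag t$ and $e\mapsto se$ together with the standard characterizations $t=s\,t^\dag t=ss^\dag t$ of the natural order. Your packaging---deducing preservation of all existing meets and joins automatically from the order isomorphism, rather than checking the meet identity directly---is a clean way to organize the same underlying idea.
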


\begin{proof} The proof is routine after observing that for $e_1,
e_2\in\E(\S)$, $se_1\wedge se_2=s(e_1\wedge e_2)$.
\end{proof}

We recall from Subsection~\ref{Ss: Stone}, that for each $e\in
\E(\S)$, $G_e$ is the clopen set in $\widehat{\E(\S)}$ of characters
supported on $e$.

\begin{proposition}\label{P: order per sec} Let $\S$ be a locally
  complete Boolean inverse monoid, and suppose $\P\hookrightarrow
  \G\xrightarrow{q} \S$ is an extension of $\S$ by $\P$.  Then there
  exists an order preserving section $j:\S\rightarrow \G$ for $q$.
\end{proposition}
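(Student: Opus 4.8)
The plan is to construct an order preserving section $j:\S\rightarrow \G$ by first defining $j$ on a sufficiently large ``generating'' set and then extending it using the local completeness of $\E(\S)$. The natural approach is a maximality argument: I would consider the collection of all order preserving sections defined on order ideals (downward-closed subsets) of $\S$ that contain $\E(\S)$, partially order this collection by extension, and invoke Zorn's Lemma to obtain a maximal such partial section. The bulk of the work is then to show that a maximal partial section must in fact be defined on all of $\S$, using the completeness of the Boolean algebra $\E(\S)$ to ``build'' the value $j(s)$ at a new element $s$ out of values already assigned below $s$.

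First I would set up the base case. Since $q|_{\E(\G)}$ is an isomorphism of $\E(\G)$ onto $\E(\S)$, define $j$ on $\E(\S)$ by $j:=(q|_{\E(\G)})^{-1}$; this is forced by Lemma~\ref{L: orderpres}, and in particular $j(1)=1$. By Lemma~\ref{L: orderpres}, verifying that a section is order preserving is equivalent to checking the multiplicative condition $j(esf)=j(e)j(s)j(f)$, so I will work with that characterization throughout, as it is purely algebraic and interacts cleanly with the idempotent structure.

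The key step is the extension argument at a maximal element. Suppose $j$ is defined on a proper order ideal $\I\supseteq\E(\S)$ and pick $s\in\S\setminus\I$. Using Lemma~\ref{L: idemiso}, the down-set $A_s=\{t:t\leq s\}$ is isomorphic (via $t\mapsto s^\dag t$) to the Boolean algebra $B_s=\{e\in\E(\S):e\leq s^\dag s\}$, and this isomorphism is complete because $\E(\S)$ is a complete Boolean algebra. The idea is to choose any single lift $v\in\G$ with $q(v)=s$, and then to \emph{correct} it so that it agrees with the already-defined values of $j$ on $A_s\cap \I$. Concretely, for each $t<s$ already in the ideal, $j(t)$ and $vt^\dag t$ differ by multiplication by an element of $\P$ (a partial isometry in $\D$), since both are lifts of $t$; the coherence of these corrections across different $t$ is where local completeness enters. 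I would assemble the pairwise orthogonal pieces $j(t_\alpha)$ corresponding to a maximal orthogonal family of idempotents below $s^\dag s$ and take their join in $\G$ (which exists because $\G$ is an extension of the Boolean inverse monoid $\S$, and orthogonal joins lift), defining $j(s)$ to be this join filled out by the chosen lift $v$ on the complementary idempotent. One then checks $q(j(s))=s$ and that the extended map still satisfies condition~(b) of Lemma~\ref{L: orderpres}, contradicting maximality unless $\I=\S$.

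The hard part will be the coherence of the correcting partial isometries: ensuring that the locally-defined lifts $j(t)$ for $t<s$ glue together into a single well-defined element of $\G$ whose restriction to each $t$ recovers $j(t)$. This is precisely where the completeness of $\E(\S)$ (local completeness of $\S$) is essential, since it guarantees that the relevant suprema of idempotents exist and that the orthogonal join defining $j(s)$ lands in $\G$. I would handle this by showing that the assignment $e\mapsto$ (the correction over $G_e$), for $e\leq s^\dag s$, is additive on orthogonal idempotents and hence, by completeness, extends to a well-defined correction over all of $s^\dag s$; the order isomorphism $\tau_s$ of Lemma~\ref{L: idemiso} translates this back into a coherent definition of $j$ on all of $A_s$. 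Once $j(s)$ is pinned down consistently with the ideal, verifying the multiplicative identity $j(esf)=j(e)j(s)j(f)$ is a routine computation using that $j|_{\E(\S)}$ is already an isomorphism onto $\E(\G)$.
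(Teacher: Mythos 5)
Your architecture is viable but genuinely different from the paper's, even though both hinge on the same two ingredients: (i) correcting an arbitrary lift by partial isometries in $\P$ determined by values of $j$ already assigned, and (ii) using completeness of $\E(\S)$ to glue those corrections. The paper runs no Zorn argument over partial sections. Instead it fixes once and for all a maximal pairwise meet-orthogonal family $\B\subseteq\S$ with $1\in\B$, defines $j$ arbitrarily on $\B$ and multiplicatively on the down-sets $A_s$ ($s\in\B$) via Lemma~\ref{L: idemiso}, and then handles every remaining $t$ in one uniform step: an arbitrary lift $w$ of $t$ is corrected by the element $h\in\P$ obtained by gluing the partial isometries $h_s=w^\dag j(t\wedge s)$ over the clopen sets $G_{e_s}$, $e_s=(t\wedge s)^\dag(t\wedge s)$; maximality of $\B$ makes $\bigcup_{s\in\B}G_{e_s}$ dense in $G_{t^\dag t}$, and the Stonean property of $\widehat{\E(\S)}$ (via \cite[Corollary~III.1.8]{TakesakiThOpAlI}) supplies the continuous extension of $h$ to all of $G_{t^\dag t}$. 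Your version localizes this: Zorn over order-preserving partial sections on order ideals, then a one-element extension at $s\notin\I$, gluing corrections only over $E:=\bigvee\{e\le s^\dag s:\ se\in\I\}$ and filling the complement with the uncorrected lift $v$. The paper's scheme buys a one-pass construction with a single density argument; yours buys a cleaner maximality framework at the cost of extra bookkeeping — in particular, Lemma~\ref{L: orderpres} is stated for globally defined sections, so you must check that its proof of (a)$\Rightarrow$(b) localizes to order ideals (it does, since it only involves elements below a given one), and you must verify order-preservation of the extended map even when $A_s\cap\I$ reduces to idempotents.

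One justification is wrong as stated and must be repaired: the claim that the join of your pairwise orthogonal pieces exists in $\G$ ``because $\G$ is an extension of the Boolean inverse monoid $\S$, and orthogonal joins lift.'' Here $\S$ is only \emph{locally} complete, and the paper's example following Definition~\ref{D: Boolean inv} exhibits a locally complete Boolean inverse monoid in which an infinite pairwise orthogonal family has no join at all; nor is there a general principle lifting joins along $q$. The join you need exists for a different reason, which your final paragraph in effect supplies: glue the correcting partial isometries first inside $\P$ — coherence on overlaps coming from order-preservation of $j$ on $\I$, the gluing itself from completeness of $\E(\S)$, i.e., the Stonean extension property of $C(\widehat{\E(\S)})$ — to obtain a single $h\in\P$ with $h^\dag h=j(E)$, and then set $j(s):=v\bigl(h\vee j(s^\dag s\wedge\neg E)\bigr)$. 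Equivalently, once $h$ is in hand the orthogonal pieces $j(se_\alpha)$ all lie below the single element $vh$, and joins of orthogonal families bounded above do exist by Lemma~\ref{L: idemiso} applied to $\G$, since $\E(\G)\simeq\E(\S)$ is complete. With that substitution, your argument goes through.
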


\begin{proof} We shall define $j$ in steps, beginning with its
  definition on $\E(\S)$.   
Recall that $q|_{\E(\G)}$ is an isomorphism of $\E(\G)$
onto $\E(\S)$.  Define $j$ on $\E(\S)$ by setting
 $j:=(q|_{\E(\G)})^{-1}$.  

Next, choose a subset
$\B\subseteq \S$ such that,
\begin{enumerate}
\item $1\in\B$;
\item \label{gnormb} if $s_1, s_2\in\B$ and $s_1\neq s_2$, then
$s_1\wedge s_2=0$;
\item $\B$ is maximal with respect to \eqref{gnormb}.
\end{enumerate}  The second condition ensures that
 $\B\cap \E(\S)=\{1\}$.

Define $j$ on the set $\B$ by choosing any 
function  $j:\B\rightarrow \G$  such that $q\circ
j=\text{id}|_\B$ and $j(1)=j(1)$.  For $s\in \B$, Let
$A_s$ and $B_s$ be the sets as in 
Lemma~\ref{L: idemiso}.  Lemma~\ref{L: idemiso} shows that the map 
$A_s\ni t\mapsto s^\dag t$ is an order isomorphism of
$(A_s,\leq)$ onto $(B_s,\leq)$.   Since $\B$ is pairwise meet
orthogonal, the sets $\{A_s: s\in\B\}$ are pairwise disjoint.  
Hence we may extend $j$  from $\B$ to $\bigcup_{s\in\B} A_s$ by
defining $j(se)=j(s)j(e)$ when $s\in\B$ and $e\leq s^\dag s$.  
By construction, $j$ is order-preserving on 
$\bigcup_{s\in\B} A_s$.

We now wish to extend $j$ to the remainder of $\S$.  Let $L=
\S\setminus \left(\bigcup_{s\in\B} A_s\right)$ and let
$\phi:L\rightarrow \G$ be any map such that $q\circ \phi=\text{id}_L$.
We shall perturb $\phi$ so that it becomes order preserving and is
compatible with the map $j$ already defined on $\S\setminus
L=\bigcup_{s\in\B} A_s$.

 Fix $t\in L$ and put $w:=\phi(t)$.  By
definition,  $q(w)=t$.  For each $s\in \B$, let
$h_s:=w^\dag j(t\wedge s)$.  Then $q(h_s)=t^\dag (t\wedge s)= t^\dag t
\wedge t^\dag s\in \E(\S)$, whence $h_s\in \P$.  Set
\[e_s:= q(h_s^\dag h_s).\] Note that $e_s=(s\wedge t)^\dag (s\wedge
t).$ Then $\{e_s: s\in \S\}$ is a pairwise orthogonal subset of
$\{e\in\E(\S): e\leq t^\dag t\}$.  It follows that $\{G_{e_s}: s\in
\B\}$ is a pairwise disjoint family of compact clopen subsets of
$G_{t^\dag t}$.  Also, $G_{t^\dag t}$ is a compact clopen subset of
$\widehat{\E(\S)}$.  The maximality of $\B$ ensures that
$H:=\bigcup_{s\in\B} G_{e_s}$ is a dense open subset of $G_{t^\dag
t}$.  We may thus uniquely define a continuous function
$h:H\rightarrow \bbC$ such that $h|_{G_{e_s}}=h_s$.  Since $\S$ is
locally complete, $G_{t^\dag t}$ is a Stonean space.  By
\cite[Corollary~III.1.8]{TakesakiThOpAlI}, $h$ extends uniquely to a
continuous complex valued function (again called $h$) on all of
$G_{t^\dag t}$.  Extend $h$ to all of $\widehat{\E(\S)}$ by setting
its values to be $0$ on the complement of $G_{t^\dag t}$.  By
construction, $\ran(h)\subseteq \bbT\cup \{0\}$, so $h\in \P$.
Finally, set
\[j(t)=wh.\] The construction shows that for $s\in \B$,
\[j(te_s)=j(t\wedge s)= j(t)j(e_s).\]

For $e\in\E(\S)$ we have (using the facts that $\{r\in\S: r\leq t\}$
is a complete Boolean algebra and $\bigvee\{e_s: s\in\B\}=t^\dag t$),
\begin{align*} j(te)&= j\left( t \bigvee_{s\in\B} ee_s\right)=
j\left(\bigvee_{s\in\B} te_s e\right)=j\left(\bigvee_{s\in\B}(t\wedge
s) e\right) \\
\intertext{(now use Lemma~\ref{L: idemiso} applied to the inverse
  monoid $\G$)}
 &=\bigvee_{s\in\B} j(t\wedge s)j(e)
=\bigvee_{s\in\B}
j(t)j(e_s)j(e)=j(t)\left(\bigvee_{s\in\B}j(ee_s)\right)
=j(t)j(e).
\end{align*} We have now defined $j$ on $L$, so that it preserves
order.  Since $j$ was defined earlier to be order preserving on
$\S\setminus L$, we see that $j$ is order preserving on all of $\S$.
\end{proof}

\subsection{A $\D$-valued reproducing kernel and a right Hilbert
$\D$-module} \label{Ss: repo ker} Let $\S$ be a Boolean inverse monoid
and let $\D=C(\widehat{\E(\S)})$.  As always, we denote the partial
isometries in $\D$ by $\P$.  Let
\begin{equation*} \P \hookrightarrow \G \xrightarrow{q} \S
\end{equation*} be an extension of $\S$ by $\P$.  In this subsection,
we will construct a $\D$-valued reproducing kernel $K:\S\times
\S\rightarrow \D$ whose evaluation functionals $k_s(t):=K(t,s)$
represent the meet-lattice structure of $\S$ in the sense that the
pointwise product of $k_s$ with $k_t$ satisfies $k_sk_t=k_{s\wedge
  t}$.  The completion of $\spn\{k_s\}_{s\in\S}$ yields a $\D$-valued
reproducing kernel right Hilbert $\D$-module, denoted $\fA$.  There is
an action of $\S$ on $\fA$ arising from the left action of $\S$ on
itself: for $s,t\in\S$, $k_t\mapsto k_{st}$.
We modify this action to produce a representation of the extension $\G$ on
$\fA$ by partial isometries in the bounded adjointable maps $\L(\fA)$.
Finally, we obtain a class of representations of $\G$ on a Hilbert
space using the interior tensor product $\fA\otimes_\pi\H$ where
$(\pi,\H)$ is a representation of $\D$ on the Hilbert space 
$\H$.  When $\S$ is a
Cartan inverse monoid and $\pi$ is faithful, 
it is this representation of $\G$ that will
generate a Cartan pair.
 We refer the reader to
\cite{AronszajnThReKe} for more on reproducing kernel Hilbert spaces.

\subsection*{} We begin with the definition of the $\D$-valued
reproducing kernel.   By Proposition~\ref{P: order per sec}, there is an
order preserving section $j\colon \S\rightarrow \G$, which we consider
fixed throughout the
remainder of this section.
\begin{definition} Define $K:\S\times \S\rightarrow \D$ by
\begin{equation*} K(t,s) = j(s^\dag t \wedge 1),
\end{equation*}  and for $s\in \S$, define $k_s:\S\rightarrow \D$ by
\begin{equation*} k_s(t)=K(t,s).
\end{equation*} 
\end{definition}
\begin{remark} By Lemma~\ref{L: Leech}, $K(s,t)$ is the source
  idempotent of $j(s\wedge t)$, that is, $K(s,t)=j((s\wedge
  t)^\dag (s\wedge t))$.
Notice also that 
$K$ is symmetric, that is $K(s,t)=K(t,s)$ for all $s,t\in \S$.
The function $k_s$ should be thought of as the ``$s$-th column'' of
the ``matrix'' $K(t,s)$. 
\end{remark}

We will show in Lemma~\ref{L: Kpos} that $K$ is positive in the sense
that
\begin{equation*} 0\leq \sum_{i,j}^n \overline{c_i}c_j K(s_i,s_j)
\end{equation*} for any finite collection of scalars $c_1,\ldots, c_n$
and $s_1,\ldots,s_n$ in $\S$.  This positivity will allow us define a
$\D$-valued inner product on $\spn\{k_s\}_{s\in\S}$.  The completion
of this span, with respect to the norm from the inner-product, will be
a right Hilbert $\D$-module (Proposition~\ref{P: right D mod}).

The following simple corollary to Lemma~\ref{L: Leech} is immediate, since
$j$ is an order preserving section.

\begin{corollary}\label{C: Leech} For any $s,t\in\S$ and any
$e\in\E(S)$ we have
\begin{equation*} K(t,se)=K(te,s)=K(te,se)=K(t,s)j(e).
\end{equation*} Thus
\begin{equation*} k_{se}=k_sj(e).
\end{equation*}
\end{corollary}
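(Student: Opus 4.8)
The plan is to reduce everything to the corresponding identity for the section $j$ and Leech's Lemma, using only the defining formula $K(t,s)=j(s^\dagger t\wedge 1)$ together with Corollary~\ref{C: orderpres} (so that $j(x^\dagger)=j(x)^\dagger$). First I would establish the central equality $K(te,s)=K(t,s)j(e)$, since the remaining claims will follow from it by symmetry and by manipulating idempotents. To do this, I compute $K(te,s)=j(s^\dagger te\wedge 1)$ and aim to show this equals $j(s^\dagger t\wedge 1)j(e)$. The key observation is that for an idempotent $e\in\E(\S)$ one has $(s^\dagger t\wedge 1)e=s^\dagger te\wedge e=s^\dagger t e\wedge 1\cdot e$, so that $s^\dagger te\wedge 1$ and $(s^\dagger t\wedge 1)e$ should coincide after meeting with $e$; I would verify the precise idempotent identity $s^\dagger te\wedge 1=(s^\dagger t\wedge 1)e$ in $\E(\S)$ directly from the meet-semilattice structure, using that meeting with an idempotent $e$ commutes appropriately with the fixed-point idempotent construction of Lemma~\ref{L: Leech}.

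Once that idempotent identity is in hand, I would apply the order-preserving section $j$. Since $j$ restricted to $\E(\S)$ is the lattice isomorphism $(q|_{\E(\G)})^{-1}$ and satisfies $j(ef)=j(e)j(f)$ for idempotents $e,f$ (this is part (b) of Lemma~\ref{L: orderpres} specialized to $s\in\E(\S)$), we get
\[
K(te,s)=j(s^\dagger te\wedge 1)=j\big((s^\dagger t\wedge 1)e\big)=j(s^\dagger t\wedge 1)j(e)=K(t,s)j(e).
\]
For the equality $K(t,se)=K(te,s)$, I would use the symmetry of $K$ noted in the remark after its definition, namely $K(t,se)=K(se,t)=K(s e, t)$, combined with the identity just proved applied with the roles of the first two arguments swapped; alternatively one computes $K(t,se)=j((se)^\dagger t\wedge 1)=j(e s^\dagger t\wedge 1)$ and runs the same idempotent manipulation on the left. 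The middle equality $K(te,se)=K(te,s)$ follows by applying the first identity a second time with $e$ idempotent and noting $e\cdot e=e$, or directly since both equal $K(t,s)j(e)$ after absorbing the idempotent. Finally, the displayed consequence $k_{se}=k_s j(e)$ is just the reformulation $k_{se}(t)=K(t,se)=K(t,s)j(e)=k_s(t)j(e)$ holding for every $t\in\S$.

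The main obstacle I anticipate is the purely lattice-theoretic identity $s^\dagger t e\wedge 1=(s^\dagger t\wedge 1)e$ for $e\in\E(\S)$; everything else is bookkeeping once $j$ is known to respect meets and products of idempotents. This identity is really the statement that post-multiplication by the idempotent $e$ intertwines the fixed-point idempotent operation $x\mapsto x\wedge 1$, and it is exactly the kind of computation Corollary~\ref{C: Leech} is asserting is ``immediate'' from Leech's Lemma. Concretely, Lemma~\ref{L: Leech} identifies $s^\dagger t\wedge 1$ as the source idempotent $(s\wedge t)^\dagger(s\wedge t)$, and the identity $s(e_1\wedge e_2)=se_1\wedge se_2$ recorded in the proof of Lemma~\ref{L: idemiso} is precisely the tool needed to push the idempotent $e$ through the meet; I would invoke that identity (with $s$ replaced by $s^\dagger t$, or directly via the order isomorphism $\tau_s$ of Lemma~\ref{L: idemiso}) to complete the verification. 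Because the statement is flagged as an \emph{immediate} corollary, I expect the final write-up to be short: a one-line idempotent identity, followed by an application of $j$.
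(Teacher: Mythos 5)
Your proposal is correct and takes essentially the paper's approach: the paper gives no written proof, declaring the corollary immediate from Lemma~\ref{L: Leech} and the order-preserving property of $j$, and your argument is exactly the expansion of that one-liner, reducing everything to the valid idempotent identity $s^\dag te\wedge 1=(s^\dag t\wedge 1)e$ (provable directly, or via $s\wedge te=(s\wedge t)e$ and Lemma~\ref{L: idemiso}) and then applying the multiplicativity of $j$ on idempotents from Lemma~\ref{L: orderpres}(b). Your use of the symmetry of $K$ to handle $K(t,se)$ is likewise sound.
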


  The significance of $K$ is that an
``integral'' on $\D j(s^\dag s)$, that is, a weight or state on $\D$
restricted to $\D j(s^\dag s)$, may be translated in a consistent way
using $K$ to an ``integral'' on $A_s$.  Remark~\ref{unitaryFM} below
explores this further in the context of the Feldman-Moore
construction.

Corollary~\ref{C: Leech} shows how the map $s\mapsto k_s$
respects the order structure of $\S$.  This is further cemented in the
following lemma, where we show that the mapping $s\mapsto k_s$ is a
meet-lattice representation of $\S$ as a family of functions from $\S$
into the lattice of projections of $\D$.  
Thus we are constructing a
$\D$-module from the order structure of $\S$.

\begin{lemma}\label{L: meet rep} For $r,s,t\in \S$, we have
	\begin{equation}\label{mr0} (s^\dag t \wedge 1)(r^\dag t \wedge
1)=((s\wedge r)^\dag t) \wedge 1,
	\end{equation} hence
	\begin{equation*} K(t,r)K(t,s)=K(t,r\wedge s).
	\end{equation*} In particular, 
	\begin{equation*} k_rk_s=k_{r\wedge s}.
	\end{equation*}
\end{lemma}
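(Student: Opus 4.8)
The plan is to establish the algebraic identity \eqref{mr0} first and then read off the two consequences. Since a product of idempotents is their meet and, by Lemma~\ref{L: Leech}, each of the three expressions in \eqref{mr0} is an idempotent, I would set $a:=s^\dag t\wedge 1$, $b:=r^\dag t\wedge 1$ and $c:=(s\wedge r)^\dag t\wedge 1$ and aim to prove $ab=c$ (equivalently $a\wedge b=c$). The essential input is the last assertion of Lemma~\ref{L: Leech}, identifying these idempotents as source idempotents, $a=(s\wedge t)^\dag(s\wedge t)$, $b=(r\wedge t)^\dag(r\wedge t)$, $c=(s\wedge r\wedge t)^\dag(s\wedge r\wedge t)$, together with the Leech relations $ta=s\wedge t$, $tb=r\wedge t$ and $tc=s\wedge r\wedge t$.

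For the inequality $c\leq ab$: since $s\wedge r\wedge t$ is a lower bound of $\{s,t\}$ we have $s\wedge r\wedge t\leq s\wedge t$, and source idempotents are monotone for $\leq$, so $c\leq a$; symmetrically $c\leq b$, whence $c\leq a\wedge b=ab$.

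For the reverse inequality $ab\leq c$ — which I expect to be the main obstacle — I would argue through the element $t(ab)$. As $ab$ is an idempotent below both $a$ and $b$ and the natural order is compatible with multiplication, $t(ab)\leq ta=s\wedge t\leq s$ and, using $ab=ba$, $t(ab)\leq tb=r\wedge t\leq r$, while trivially $t(ab)\leq t$. Thus $t(ab)$ is a common lower bound of $s,r,t$, so $t(ab)\leq s\wedge r\wedge t=tc$. Now $ab$ and $c$ are both idempotents below $t^\dag t$, and the source idempotent of $te$ equals $e$ for any idempotent $e\leq t^\dag t$, since $(te)^\dag(te)=e\,t^\dag t\,e=e$; comparing source idempotents in $t(ab)\leq tc$ then gives $ab\leq c$. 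Combining the two inequalities yields \eqref{mr0}.

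The consequences then follow mechanically. Because $j|_{\E(\S)}$ is a lattice isomorphism onto $\E(\G)=\mathrm{Proj}(\D)$, it is multiplicative on idempotents, so
\[
K(t,r)K(t,s)=j(r^\dag t\wedge 1)\,j(s^\dag t\wedge 1)=j\big((r^\dag t\wedge 1)(s^\dag t\wedge 1)\big),
\]
and, using that these idempotents commute, \eqref{mr0} rewrites the argument as $(s\wedge r)^\dag t\wedge 1$, giving $K(t,r)K(t,s)=K(t,r\wedge s)$. Evaluating at each $t\in\S$, this is exactly the pointwise-product identity $k_rk_s=k_{r\wedge s}$ in $\D$.
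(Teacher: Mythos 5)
Your proof is correct, but it takes a somewhat different route from the paper's. The paper disposes of \eqref{mr0} in one step: by Lemma~\ref{L: idemiso}, $\tau_t\colon\{u\in\S : u\leq t\}\to\{e\in\E(\S): e\leq t^\dag t\}$, $u\mapsto t^\dag u$, is an order isomorphism, hence preserves binary meets; applying it to $s\wedge r\wedge t=(s\wedge t)\wedge(r\wedge t)$ and using Lemma~\ref{L: Leech} to identify $\tau_t(s\wedge t)=t^\dag s\wedge 1$, $\tau_t(r\wedge t)=t^\dag r\wedge 1$ (with the meet of idempotents being their product) yields the identity immediately. You never invoke Lemma~\ref{L: idemiso}; instead, your two-inequality argument re-derives from scratch exactly the fragments of that isomorphism that are needed: monotonicity of $u\mapsto u^\dag u$ (for $c\leq ab$), monotonicity of $e\mapsto te$ together with the inversion identity $(te)^\dag(te)=e$ for idempotents $e\leq t^\dag t$ (for $ab\leq c$), all grounded in Leech's relations $ta=s\wedge t$, $tb=r\wedge t$, $tc=s\wedge r\wedge t$ and the compatibility of the natural partial order with multiplication. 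Each of these steps is sound, and your appeal to the triple meet is licensed since $(\S,\leq)$ is a meet semilattice. What your route buys is self-containedness --- only Lemma~\ref{L: Leech} plus standard inverse-semigroup order facts are used --- at the cost of length, since it amounts to an inlined proof of the relevant part of Lemma~\ref{L: idemiso}. Your treatment of the two consequences (multiplicativity of $j$ on idempotents, which follows from $j|_{\E(\S)}$ being a Boolean-algebra isomorphism or from Lemma~\ref{L: orderpres}, then pointwise evaluation) correctly fills in what the paper compresses into ``the remaining statements of the lemma follow.''
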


\begin{proof} Take any $r,s,t\in \S$.  Applying the isomorphism
  $\tau_t$ of
  $(A_t,\leq)$ onto $(B_t,\leq)$ of 
Lemma~\ref{L: idemiso} and using Lemma~\ref{L: Leech},
 we obtain
\begin{align*}
\left(t^\dag(s\wedge r)\right)\wedge 1&= \tau_t(s\wedge r\wedge t)= 
 \tau_t(s\wedge t)\wedge \tau_t(r\wedge t)\\
&=
 (t^\dag s \wedge 1) (t^\dag r\wedge 1).
\end{align*}
This equality is equivalent to~\eqref{mr0}.
The remaining statements of the lemma
follow.
\end{proof}

We will now show that $K(t,s)=j(s^\dag t\wedge 1)$ defines a positive
$\D$-valued positive kernel.  A key step in this will be to show that
if $\rho$ is a pure state on $\D$, then $\rho\circ K$ defines a positive
semi-definite matrix on $\S$, in the sense of Moore as described in
\cite[p.\ 341]{AronszajnThReKe}.  That is, for each pure state $\rho$ on
$\D$, and for $s_1,\ldots,s_n\in \S$ and $c_1,\ldots,c_n\in \bbC$
\begin{equation*} 0\leq \sum_{i,j}^n \overline{c_i}c_j
K_\rho(s_i,s_j),
\end{equation*} where $K_\rho:=\rho\circ K$. Thus, each pure state $\rho$ 
will determine a reproducing kernel Hilbert space $\H_\rho$ of
functions on $\S$, with kernel $K_\rho$ and point-evaluation functions
$(k_s)_\rho:=\rho\circ k_s$.

\begin{lemma}\label{L: Kpos} Let $n\in\bbN$, let $c_1,\dots, c_n$ be
complex numbers, and let $s_1,\dots , s_n\in\S$.  Then, with respect
to the positive cone in $\D$,
\begin{equation*} 0\leq \sum_{i,j=1}^n \overline{c_i} c_j K(s_i,s_j).
\end{equation*}
\end{lemma}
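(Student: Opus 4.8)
The plan is to detect positivity pointwise on the Gelfand spectrum. Since $\D=C(\widehat{\E(\S)})$ and an element of a commutative \cstaralg\ is positive exactly when it is nonnegative at every point, it suffices to fix a character $\rho\in\widehat{\E(\S)}$ and show that
\[
\sum_{i,j=1}^n \overline{c_i}\,c_j\,\rho\!\left(K(s_i,s_j)\right)\geq 0.
\]
First I would record that $K(s_i,s_j)=j(e_{ij})=\chi_{G_{e_{ij}}}$, where $e_{ij}:=s_i^\dag s_j\wedge 1=(s_i\wedge s_j)^\dag(s_i\wedge s_j)$ by Lemma~\ref{L: Leech}; this uses that $j|_{\E(\S)}=(q|_{\E(\G)})^{-1}$ is the canonical identification of $\E(\S)$ with the projections $\chi_{G_e}$ of $\D$ under Stone duality. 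Hence $\rho(K(s_i,s_j))=\rho(e_{ij})\in\{0,1\}$, and the displayed sum equals $c^*M^\rho c$ for the real symmetric $0$--$1$ matrix $M^\rho:=[\rho(e_{ij})]_{i,j}$. The lemma thus reduces to showing each $M^\rho$ is positive semidefinite.

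To analyze $M^\rho$ I would use that $\rho$ is a lattice homomorphism, so $\rho(ef)=\rho(e)\rho(f)$ for idempotents $e,f$. Put $I_\rho:=\{i:\rho(s_i^\dag s_i)=1\}$. Since $e_{ij}\leq s_i^\dag s_i$ and $e_{ij}\leq s_j^\dag s_j$, any entry $\rho(e_{ij})=1$ forces $i,j\in I_\rho$; in particular every row and column indexed outside $I_\rho$ vanishes identically and may be discarded without affecting positive semidefiniteness. On $I_\rho$ the matrix has $1$'s on the diagonal (as $\rho(e_{ii})=\rho(s_i^\dag s_i)=1$) and is symmetric (as $K$ is symmetric), so the relation $i\mathrel{R_\rho}j\iff\rho(e_{ij})=1$ is reflexive and symmetric on $I_\rho$.

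The crux is transitivity of $R_\rho$. Suppose $\rho(e_{ij})=\rho(e_{jk})=1$ and set $e:=e_{ij}e_{jk}\in\E(\S)$, so that $\rho(e)=\rho(e_{ij})\rho(e_{jk})=1$. Using $s_ie_{ij}=s_je_{ij}$ and $s_je_{jk}=s_ke_{jk}$ together with the commutativity of idempotents, I would compute $s_ie=s_ke$; writing $w:=s_ie=s_ke$ gives $w\leq s_i$ and $w\leq s_k$, hence $w\leq s_i\wedge s_k$, so that $w^\dag w\leq e_{ik}$. Since $e\leq e_{ij}\leq s_i^\dag s_i$, one checks $w^\dag w=e\,(s_i^\dag s_i)\,e=e$, whence $e\leq e_{ik}$ and $\rho(e_{ik})\geq\rho(e)=1$. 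Therefore $R_\rho$ is an equivalence relation on $I_\rho$, and after a permutation $M^\rho$ is block diagonal with all-ones blocks $\mathbf{1}\mathbf{1}^*$, hence positive semidefinite. This proves the lemma.

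I expect the transitivity computation $s_ie=s_ke\Rightarrow e\leq e_{ik}$ to be the only substantive step; the remainder is the pointwise reduction and elementary bookkeeping with idempotents. The one place needing care is the identification $K(s_i,s_j)=\chi_{G_{e_{ij}}}$, which makes $\rho(K(s_i,s_j))$ genuinely $\{0,1\}$-valued and turns the problem into the combinatorics of a symmetric $0$--$1$ matrix; once that is in place, no appeal to the $\beta$-action or to the hyperstonean hypothesis is required, only the meet-semilattice structure of $\S$ and the multiplicativity of $\rho$ on $\E(\S)$.
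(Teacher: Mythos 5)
Your proof is correct and follows essentially the same route as the paper's: both reduce positivity in $\D=C(\widehat{\E(\S)})$ to a fixed character $\rho$, observe that $\left[\rho(K(s_i,s_j))\right]_{i,j}$ is a symmetric $\{0,1\}$-matrix whose support relation is an equivalence relation on $\{i:\rho(s_i^\dag s_i)=1\}$, and conclude positive semidefiniteness from the resulting block all-ones structure. The only cosmetic difference is the transitivity step, where the paper cites Lemma~\ref{L: meet rep} to get $K(s_i,s_j)K(s_j,s_k)=K(s_j,s_i\wedge s_k)\leq K(s_i,s_k)$, while you re-derive the equivalent inequality $e_{ij}e_{jk}\leq e_{ik}$ directly from Lemma~\ref{L: Leech}.
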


\begin{proof} Fix a pure state $\rho$ on $\D$.  Note that, as
$\D=C(\widehat{\E(\S)})$, we  view $\rho$ as being in
$\widehat{\E(\S)}$.  With this identification, we have
$\rho(K(s_i,s_j))=\rho(s_j^\dag s_i\wedge 1)$.  Let
$\mathbf{n}=\{1,\dots, n\}$ and
\[ R=\{ (i,j)\in \mathbf{n}\times \mathbf{n}: \rho(K(s_i,s_j))=1\}.\]
We shall show that $R$ is a symmetric and transitive
relation. Symmetry of $R$ is immediate from the fact that
$K(s_i,s_j)=K(s_j,s_i)$.

Suppose for some $i,j,k\in \mathbf{n}$, that
$\rho(K(s_i,s_j))=1=\rho(K(s_j,s_k)).$ To show that $R$ is transitive,
we must show that $\rho(K(s_i,s_k))=1$.  By Lemma~\ref{L: Leech} and
Lemma~\ref{L: meet rep}, we have
\begin{align*} K(s_i,s_j)K(s_j,s_k)&=K(s_j,s_i)K(s_j,s_k)=K(s_j,s_i\wedge
  s_k) \\ &\leq
K(s_i,s_k).
\end{align*} Applying $\rho$ yields $1=\rho(K(s_i,s_k))$. 
Thus $(i,k)\in R$, and $R$ is transitive.

The symmetry and transitivity of $R$ imply that if $(i,j)\in R$, then
both $(i,i)$ and $(j,j)$ belong to $R$.  Let
\[\mathbf{n}_1:=\{i\in\mathbf{n}: \rho(s_i^\dag s_i)=1\}.\] Then
$R\subseteq \mathbf{n}_1\times \mathbf{n}_1$ and $R$ is an equivalence
relation on $\mathbf{n}_1$.

Write $\mathbf{n}_1=\bigcup_{m=1}^r X_m$ as the disjoint union of the
equivalence classes for $R$, and let $T(\rho)\in M_n(\bbC)$ be the
matrix whose $ij$-th entry is $\rho(K(s_j,s_i))=\rho(j(s_i^\dag s_j
\wedge1))$.  Let $\{\xi_j\}_{j=1}^n$ be the standard basis for
$\bbC^n$ and let $\zeta_m=\sum_{j\in X_m} \xi_j$.  Then
\[T(\rho)=\sum_{m=1}^r \zeta_m\zeta_m^*\geq 0,\] where
$\zeta_m\zeta_m^*$ is the rank-one operator, $\xi\mapsto
\innerprod{\zeta_m,\xi}\zeta_m$.  Thus,
\begin{equation*} \rho\left(\sum_{i,j=1}^n \overline{c_i} c_j
K(s_i,s_j)\right)= \innerprod{T(\rho)\begin{pmatrix}c_1\\ \vdots \\
c_n\end{pmatrix}, \begin{pmatrix}c_1\\ \vdots \\ c_n\end{pmatrix}}\geq
0.
\end{equation*} As this holds for every pure state $\rho$ on ${\D}$, we
find that $\sum_{i,j=1}^n \overline{c_i} c_j K(s_i,s_j)\geq 0$ in
$\D$.
\end{proof}

Let
\begin{equation*} \fA_0 = \spn\{k_s \colon s\in \S\}.
\end{equation*} Our goal is show that there is a natural $\D$-valued
inner product on $\fA_0$.  Completing with respect to the norm induced
by this inner product will yield a Banach space $\fA$ of functions from $\S$
into $\D$.  For a function $u\in\fA$ and $d\in\D$,
define $u d$ to be the function from $\S$ into $\D$ given by
\[(ud)(s):= u(s) d.\]  We shall show that this  makes $\fA$ into a
 right Hilbert $\D$-module.

In order to study the space $\fA_0$, we use the 
reproducing kernel Hilbert spaces generated by composing elements of 
$\fA_0$ with
a given pure state $\rho$ on $\D$.

For each pure state $\rho$ on $\D$ and $f\in\fA_0$, write $f_\rho$ for the
function $\rho\circ f$ on $\S$.  In particular, notice that for
$s\in\S$, $(k_s)_\rho(t)=\rho(K(s,t)).$ Likewise, set
$K_\rho(t,s):=\rho(K(t,s))$. Finally, put
\[\fA_\rho:=\spn\{(k_s)_\rho : s\in\S\}.\]

It was shown in Lemma~\ref{L: Kpos} that $K_\rho$ is a positive matrix
in the sense of~\cite{AronszajnThReKe}.  Thus $K_\rho$ determines a
reproducing kernel Hilbert space $\H_\rho$, with $\fA_\rho$ dense in
$\H_\rho$.  For $u_\rho, v_\rho\in\fA_\rho$, we may find $n\in\bbN$,
$c_i, d_i\in \bbC$, and $s_i, t_j\in\S$ so that $u_\rho=\sum_{i=1}^n
c_i (k_{s_i})_\rho$ and $v_\rho= \sum_{i=1}^n d_i (k_{t_i})_\rho$.
The formula,
\begin{align*}
\innerprod{u_\rho,v_\rho}_{\rho} &:=\innerprod{\sum_{i=1}^n c_i
(k_{s_i})_\rho, \sum_{j=1}^n d_i (k_{t_j})_\rho}_\rho
= \sum_{i,j=1}^n
\overline{c_i} d_j K_\rho(s_i,t_j)\\ &=\sum_{i,j=1}^n \overline{c_i} d_j
\rho(K(s_i,t_j))
\end{align*}
gives a well-defined inner product on $\fA_\rho$ by
the Moore-Aronszajn Theorem (see \cite[paragraph (4),
p. 344]{AronszajnThReKe}).  In particular,
$\norm{u_\rho}:=\innerprod{u_\rho,u_\rho}_\rho$ is a norm on
$\fA_\rho$.

We now are ready to define a $\D$-valued inner product on $\fA_0$.  We
wish the inner product to be conjugate linear in the first variable
and to satisfy,
\[\innerprod{k_s,k_t}=k_t(s)=K(s,t).\] As before, for $u, v\in\fA_0$,
write $u$ and $v$ as linear combinations, $u=\sum_{i=1}^n c_i k_{s_i}$
and $v= \sum_{i=1}^n d_i k_{t_i}$ of elements from $\{k_s: s\in\S\}$.
Then for a pure state $\rho$ on $\D$,
\[ \rho\left(\sum_{i,j=1}^n \overline{c_i}d_j K(s_i,t_j)\right)
=\innerprod{u_\rho,v_\rho}_\rho.\] Hence $\sum_{i,j=1}^n
\overline{c_i}d_j K(s_i,t_j)\in\D$ depends only upon $u$ and $v$ and
not on the choice of linear combinations from $\{k_s: s\in\S\}$ which
represent them.  Therefore, the following definition makes sense.
\begin{definition}\label{D: inner prod} For $u=\sum_{i=1}^n c_i
k_{s_i}$ and $v= \sum_{i=1}^n d_i k_{t_i}$ in $\fA_0$,
\[ \innerprod{u,v}:=\sum_{i,j=1}^n \overline{c_i}d_j K(s_i,t_j)\] is a
well-defined $\D$-valued inner product on $\fA_0$ which is
sesquilinear in the first variable.
\end{definition}

Finally, define $\fA$ to be the completion of $\fA_0$ relative to the
norm,
\[\norm{u}:=\norm{\innerprod{u,u}}_\D^{1/2}.\]

\begin{proposition}\label{P: right D mod} The space $\fA$ is a Banach
space of functions from $\S$ to $\D$, satisfying
\begin{equation*} u(s)=\<k_s,u\>
\end{equation*} for each $u\in\fA$ and $s\in\S$.

The right action of $\D$ on $\fA$ given by
\begin{equation*} (vh)(s)=v(s)h
\end{equation*} for  $h\in\D$, $v\in\fA$ and $s\in S$, makes $\fA$
into a right Hilbert $\D$-module.
\end{proposition}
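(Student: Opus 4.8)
The plan is to push the reproducing-kernel structure through the completion in three stages: verify the reproducing identity and a uniform bound on $\fA_0$, use them to realize $\fA$ as honest functions, and then install the module structure by continuity. First I would record the reproducing identity on the dense subspace $\fA_0$: for $u=\sum_i c_i k_{s_i}\in\fA_0$, Definition~\ref{D: inner prod} gives
\[\innerprod{k_s,u}=\sum_i c_i\,K(s,s_i)=\sum_i c_i\,k_{s_i}(s)=u(s),\]
so $u(s)=\innerprod{k_s,u}$ already holds on $\fA_0$. The key quantitative input is a uniform bound on the $k_s$: since $s^\dag s\leq 1$ we have $K(s,s)=j(s^\dag s\wedge 1)=j(s^\dag s)$, and because $j|_{\E(\S)}$ is the inverse of the lattice isomorphism $q|_{\E(\G)}$ it carries $s^\dag s$ to a projection in $\D$; hence $K(s,s)$ is a projection and $\norm{k_s}^2=\norm{K(s,s)}\leq 1$. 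The Cauchy--Schwarz inequality $\norm{\innerprod{u,v}}\leq\norm{u}\,\norm{v}$, valid for the positive $\D$-valued form of Lemma~\ref{L: Kpos}, then yields $\norm{u(s)}=\norm{\innerprod{k_s,u}}\leq\norm{u}$ for every $s$, so each point evaluation is contractive on $\fA_0$; the same bound shows $\norm{u}=0$ forces $u(s)=0$ for all $s$, so $\norm{\cdot}$ is a genuine norm on $\fA_0$.

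Next I would realize $\fA$ as functions. The inner product is jointly continuous by Cauchy--Schwarz, so it extends uniquely to a $\D$-valued form on the completion $\fA$; for $u\in\fA$ I then define $u(s):=\innerprod{k_s,u}$, which agrees with the function values on $\fA_0$ by the reproducing identity and, by contractivity, is a bona fide function $\S\to\D$. This realization is injective: if $\innerprod{k_s,u}=0$ for every $s$, then $\innerprod{v,u}=0$ for all $v\in\fA_0$ by linearity, hence for all $v\in\fA$ by density and continuity, so $\innerprod{u,u}=0$ and $u=0$. Thus $\fA$ is identified with a Banach space of functions $\S\to\D$ for which $u(s)=\innerprod{k_s,u}$, which is the first assertion.

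Finally I would install the module structure. The only point that uses the specific order geometry is that $\fA$ is closed under the action $(vh)(s)=v(s)h$, since $\fA_0$ itself need not be: Corollary~\ref{C: Leech} gives $k_s\,\chi_{G_e}=k_s\,j(e)=k_{se}\in\fA_0$ for every $e\in\E(\S)$. Because the locally constant functions $\spn\{\chi_{G_e}:e\in\E(\S)\}$ are dense in $\D=C(\widehat{\E(\S)})$ (the $G_e$ form a clopen basis) and $\norm{k_s h}\leq\norm{k_s}\,\norm{h}$, it follows that $k_s h\in\fA$ for every $h\in\D$, and hence $uh\in\fA$ for all $u\in\fA$ by linearity and continuity; the pointwise identity $(uh)(s)=u(s)h$ then passes to the limit through the reproducing property. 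The remaining axioms, namely $\innerprod{u,vh}=\innerprod{u,v}h$, $\innerprod{v,u}=\innerprod{u,v}^*$, and positivity, hold on $\fA_0$ by Definition~\ref{D: inner prod}, Corollary~\ref{C: Leech}, and Lemma~\ref{L: Kpos}, and extend to $\fA$ by continuity. Since $\fA$ is complete by construction, it is a right Hilbert $\D$-module.

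I expect the main obstacle to be the realization and closure steps rather than any single computation. Concretely, the delicate points are that the abstract completion genuinely consists of functions, which rests on the uniform projection bound $\norm{K(s,s)}\leq 1$ together with positive definiteness, and that $\fA$ is stable under the $\D$-action even though $\fA_0$ is not a $\D$-module, which is exactly where the identity $k_{se}=k_s\,j(e)$ of Corollary~\ref{C: Leech} and the density of the locally constant functions in $\D$ are needed.
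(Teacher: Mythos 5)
Your proof is correct and follows essentially the same route as the paper: the reproducing identity on $\fA_0$, Cauchy--Schwarz to make each evaluation $u\mapsto u(s)=\innerprod{k_s,u}$ contractive so that the completion is realized as a Banach space of functions, and then Corollary~\ref{C: Leech} (i.e.\ $k_{se}=k_s j(e)$) together with the norm-density of the span of projections in $\D$ to install the right $\D$-action and pass $\innerprod{u,vh}=\innerprod{u,v}h$ to the completion. The only cosmetic differences are that you invoke the abstract Cauchy--Schwarz inequality for positive $\D$-valued forms where the paper derives it from the scalar kernels $K_\rho$ via pure states, and you supply two details the paper leaves implicit (the projection bound $\norm{k_s}\leq 1$ and the injectivity of the realization of the completion as functions), both of which are correct and harmless.
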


\begin{proof} It follows from Definition~\ref{D: inner prod} that, for
$u\in\fA_0$ and $s\in\S$, $u(s)=\<k_s,u\>$.  For any $u,v\in\fA_0$ and
any pure state $\rho$ on $\D$ we have
\begin{equation*} \rho(\<u,v\>)=\<u_\rho,v_\rho\>_\rho.
\end{equation*} Hence
\begin{equation*} \<u,v\>^*\<u,v\>\leq\<u,u\>\<v,v\>
\end{equation*} for any $u,v\in \fA_0$.  For $s\in \S$, it now follows that the
evaluation map 
$\varepsilon_s \colon  \fA_0 \rightarrow \D$ given by 
\[\fA_0\ni u\mapsto u(s)\in \D\]
is continuous.  Thus, the evaluation map $\varepsilon_s$ extends to a
continuous map on $\fA$.  Therefore, $\fA$ is a Banach space of
functions from $\S$ into $\D$, and for each $u\in\fA$ and $s\in\S$ we
have $u(s)=\<k_s,u\>$.

Write $\D_0$ for the linear span of the projections in $\D$.  Note
that, for $u,v\in\fA_0$ we have that $\<u,v\>\in\D_0$.
Further if $h\in\D_0$, then for any
$v\in\fA_0$, $vh\in\fA_0$.  To see this, 
write $v$ and $h$ as linear combinations,
$v=\sum_{n=1}^N \alpha_nk_{t_n}$ and $h=\sum_{m=1}^N \beta_m j(e_m)$,
where $\alpha_n, \beta_m\in \bbC$, $t_n\in \S$ and $e_m\in\E(\S)$.  
Then, by Corollary~\ref{C: Leech}, we have
\[vh=\sum_{n,m=1}^N \alpha_n\beta_m k_{t_n}j(e_m)=\sum_{n,m=1}^N
\alpha_n\beta_m k_{t_ne_m}\in\fA_0.\]

A calculation, again using Corollary~\ref{C: Leech}, shows that for
$u\in\fA_0$, we have
\begin{equation}\label{Eq: ract} \innerprod{u,vh}=\innerprod{u,v}h.
\end{equation}

Continuity of multiplication by $h$  
then yields $vh\in\fA$ for every $v\in\fA$ and $h\in\D_0$.  As the
projections of $\D$ span a norm dense subset of $\D$, we then obtain
$vh\in\fA$ for every $v\in\fA$ and $h\in\D$.  Since equation~\eqref{Eq: ract}
passes to the completions, the proof is complete.
\end{proof}

\subsection{The construction of the representation}\label{Ss: G rep}

Our goal in this subsection is to construct a representation of an
extension $\G$ of a Boolean inverse monoid $\S$ by $\P$, where $\P$ is
the semigroup of partial isometries in $\D=C(\widehat{\E(\S)})$.  We
will represent $\G$ in the adjointable operators on $\fA$, where $\fA$
is the right Hilbert $\D$-module constructed in the previous
subsection.

We fix an extension
\begin{equation*} \P \hookrightarrow \G \xrightarrow{q} \S.
\end{equation*} By Proposition~\ref{P: order per sec}, there is an
order preserving section $j \colon \S \rightarrow \G$, such that
$q\circ j = \textrm{id}_\S$.  Thus, while the extension need not be
a trivial trivial extension of inverse semigroups,  we do
 have a  splitting at the level of partially ordered
sets:
\[\xymatrix{ (\P,\leq)\ar@{^{(}->}[r] &(\G,\leq)\ar@/^/[r]^q
&(\S,\leq) \ar@/^/[l]^j}.\]

A construction of Lausch \cite{LauschCoInSe} shows that, up to an equivalent extension, $\G$ can
be viewed as the set
\begin{equation*} \{ [s,p] \colon s\in \S,\ p\in\P,\ q(p^\dag
p)=s^\dag s \}.
\end{equation*} That is, every element $v=[s,p]\in \G$ consists of a
function $p\in\P$ ``supported" on an element $s\in\S$.  The product on
$\G$ is then determined by a cocycle function
\begin{align*} \alpha \colon & \S \times \S \rightarrow \P \\ & [s,t]
\mapsto j(st)^\dag j(s) j(t).
\end{align*} We do not wish to dwell on this viewpoint.  It can be
computationally cumbersome, and while it lies behind much of our work,
most of it is unnecessary for our
constructions.  However, we will need to be able to describe certain
elements of $\G$ in terms of their support in $\S$ and a function in
$\P$.  In order to do this, we construct a cocycle-like function,
related to the cocycle $\alpha$.

\begin{definition}\label{D: cocycle like} Define a cocycle-like
function $\sigma \colon \G \times \S \rightarrow \P$
by
\begin{equation*} \sigma(v,s)=j(q(v)s)^\dag vj(s)=j(s^\dag
q(v^\dag))vj(s).
\end{equation*}
Since
\begin{equation*} q(\sigma(v,s))=s^\dag q(v^\dag v)s\in \E(\S),
\end{equation*} $\sigma(v,s)\in\P$.  Thus $\sigma$ indeed maps
$\G\times \S$ into $\P$. 
\end{definition}

\begin{remark} Lausch's cocycle $\alpha$ can be recovered from $\sigma$
as follows.  For any $s,t\in\S$ we have
\begin{equation*} \sigma(j(s),t)=j(st)^\dag j(s)j(t)=\alpha(s,t).
\end{equation*} 
Also, observe that for all $v\in\G$ and $s\in\S$ we have
\begin{equation*} vj(s) = j(q(v)s)\sigma(v,s).
\end{equation*} Thus elements of the form $vj(s)$ can be described in
terms of an element $q(v)s\in \S$ and $\sigma(v,s)\in\P$.  The left action
of $\G$ on the set  $j(\S)$ will be used to construct the representation.
\end{remark}

\begin{lemma}\label{chop}  Let $N\in \bbN$ and let $s_1,\dots,
  s_N$ be non-zero elements of $\S$.  There exists a finite set
  $A\subseteq \S$ with the following properties.
\begin{enumerate}
\item $0\notin A$.
\item The elements of $A$ are pairwise meet orthogonal.
\item Each $a\in A$ satisfies:
\begin{enumerate}
\item[i)]  for $1\leq n\leq N$, $a\wedge s_n\in \{a,0\}$; and
\item[ii)] there exists $1\leq n\leq N$ such that
  $a\wedge s_n=a$.
\end{enumerate}
\item For each $1\leq n\leq N$, $s_n=\bigvee\{a\in A: a\leq s_n\}$.
\end{enumerate}
\end{lemma}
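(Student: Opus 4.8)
The plan is to induct on $N$, building $A$ as a common refinement of $s_1,\dots,s_N$ into pairwise meet-orthogonal ``cells,'' each lying below some $s_n$. The base case $N=1$ is immediate: $A=\{s_1\}$ works since $s_1\neq 0$. For the inductive step I assume the result for $s_1,\dots,s_{N-1}$, obtaining a finite family $A'$ satisfying (a)--(d), and then incorporate $s_N$. The guiding picture (via the pseudogroup description of $\S$) is that each $a\in A'$ is a partial homeomorphism sitting inside some $s_n$; I split $a$ along $s_N$ into the part agreeing with $s_N$ and the part orthogonal to it, and then adjoin whatever portion of $s_N$ the family $A'$ fails to cover.

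The central tool is Lemma~\ref{L: idemiso}: for each $a\in A'$ the set $A_a=\{t:t\leq a\}$ is a Boolean algebra, so I may write $a=(a\wedge s_N)\vee a'$, where $a'$ is the complement of $a\wedge s_N$ in $A_a$. These two pieces are meet-orthogonal, and, using $a'\wedge s_N\leq a'\wedge(a\wedge s_N)=0$, the piece $a'$ is orthogonal to $s_N$ while $a\wedge s_N\leq s_N$; since both pieces lie below $a$, each still satisfies (c)(i) for $s_1,\dots,s_{N-1}$. Replacing each $a\in A'$ by its nonzero pieces yields a pairwise meet-orthogonal family refining all of $s_1,\dots,s_N$. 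To repair (d) for $s_N$ I set $c:=s_N\ominus\bigvee_{a\in A'}(a\wedge s_N)$, the complement in the Boolean algebra $A_{s_N}$ of the part of $s_N$ already captured, and adjoin $c$ when it is nonzero. Here I use the observation that elements below a common element $s$ that are meet-orthogonal are automatically orthogonal in the $\dag$-sense: if $a_i=se_i$ with $e_i\leq s^\dag s$ and $e_1\wedge e_2=0$, then $a_1^\dag a_2=e_1e_2=0$ and likewise $a_1a_2^\dag=0$. Consequently the finite joins involved exist by Definition~\ref{D: Boolean inv}(3) and agree with the Boolean-algebra joins in $A_{s_N}$.

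The step I expect to be the main obstacle is verifying that the adjoined cell $c$ is orthogonal to each $s_n$ with $n<N$, which is precisely what properties (b) and (c)(i) demand of it. Since $c\wedge s_n\leq s_N\wedge s_n$, it suffices to prove $s_N\wedge s_n\leq\bigvee_{a\in A'}(a\wedge s_N)$, after which $c\wedge s_n\leq c\wedge\bigvee_{a}(a\wedge s_N)=0$. I establish this covering inequality inside the single Boolean algebra $A_{s_n}$: by (d) for $A'$ the cells $\{a\in A':a\leq s_n\}$ form a finite partition of the top element $s_n$, so finite distributivity in $A_{s_n}$ gives $s_N\wedge s_n=\bigvee_{a\leq s_n}\big((s_N\wedge s_n)\wedge a\big)=\bigvee_{a\leq s_n}(s_N\wedge a)$, each summand lying below the target join. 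Confining every join, meet, and complement to one of the Boolean algebras $A_a$, $A_{s_n}$, or $A_{s_N}$ at each stage is what keeps the argument free of any global distributivity law in $\S$. With $c$ handled, properties (a)--(d) for $A=\{\text{nonzero pieces of }A'\}\cup\{c\}$ follow by routine checks: (a) by discarding zeros; (b) from meet-orthogonality of the pieces together with $c\leq s_N$ being orthogonal to the pieces outside $s_N$; (c)(ii) since every piece lies below some $s_n$; and (d) for $n<N$ from $(a\wedge s_N)\vee a'=a$ with (d) for $A'$, and for $n=N$ directly from the definition of $c$.
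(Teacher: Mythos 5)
Your proof is correct and follows essentially the same route as the paper's: induction on $N$, splitting each cell $a$ of the inductively obtained family along $s_N$ into $a\wedge s_N$ and a complementary piece below $a$, adjoining the uncovered remainder of $s_N$, and killing $c\wedge s_n$ for $n<N$ via the covering identity $s_n\wedge s_N=\bigvee\{a\wedge s_N\colon a\in A',\ a\le s_n\}$, which is exactly how the paper disposes of its leftover element $r$. The one divergence is to your credit: the paper's shorthand $s\setminus t:=s(s^\dag s\wedge\neg(t^\dag t))$ does not in general satisfy $(s\setminus t)\vee(s\wedge t)=s$ (take $s,t$ with $s^\dag s=t^\dag t$ but $s\wedge t=0$, e.g.\ the identity and a transposition in a symmetric inverse monoid), whereas your relative complement of $a\wedge s_N$ in the Boolean algebra $A_a$ of Lemma~\ref{L: idemiso} is the reading that makes the decomposition of each cell, and hence property (d), go through.
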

\begin{proof}  Throughout the proof, when $s, t\in\S$,
  we use $s\setminus t$ to denote the element $s(s^\dag s \meet \neg
  (t^\dag t))$; thus $s\setminus t$ is orthogonal to $s\meet t$ and 
satisfies $(s\setminus t)\vee (s\meet t)= s$.

  We argue by induction on $N$.  If $N=1$, take $A=\{s_1\}$.  Suppose
  now that $N>1$ and the result holds whenever we are given non-zero
  elements $s_1,\dots, s_{N-1}$ of $\S$.

  Let $s_1,\dots, s_N$ be non-zero elements of $\S$ and let $A_{N-1}$
  be the set constructed using the induction hypothesis applied to
  $s_1,\dots , s_{N-1}$.  For each $b\in A_{N-1}$, let $C_b:=\{b\meet
  s_N, b\setminus s_N\}\setminus \{0\}$ and 
  put $X:=\bigcup_{b\in A_{N-1}} C_b$.  Since the elements of
    $A_{N-1}$ are pairwise meet disjoint, so are the elements of $X$.
    Let $t:= \bigvee\{x\in X: x\leq s_N\}$ and let  
$r:=s_N\setminus t$.  Notice that $r\meet x=0$ for any $x\in X$.  
 Finally, define
\[A:=\begin{cases} \{r\}\cup X&\text{ if $r\neq 0$};\\ 
X&\text{ if $r=0$.}
\end{cases} \] Then $A$ is pairwise meet orthogonal, and $0\notin A$. 
 By construction, we have $s_N=\bigvee\{a\in A: a\leq
s_N\}$.  Moreover, if $1\leq n\leq N-1$ and $b\in A_{N-1}$ with $b\leq
s_n$, then $b=\bigvee C_b$.  Since $A_{N-1}$ satisfies property (d),
 we 
obtain $s_n=\bigvee\{a\in A: a\leq s_n\}$.  Thus $A$ satisfies
property (d) also.

Property (c) is equivalent to the statement that for $a\in A$, 
\[\{0\}\neq \{a\wedge s_n: 1\leq n\leq N\}\subseteq \{0,a\}.\]
For 
$a\in X$, this clearly holds.
Suppose $r\neq 0$. Then $r\wedge s_N=r$, so $\{0\}\neq \{r\wedge
s_n: 1\leq n\leq N\}$.  If $1\leq n\leq N-1$, then $r\wedge s_n=
r\wedge (\bigvee\{b\in X: b\leq s_n\})=0$.  Hence 
$\{r\wedge s_n: 1\leq n\leq N\}\subseteq \{r,0\}$.
  Therefore $A$ satisfies
the requisite properties and we are done.
\end{proof}

We now have all the ingredients we need to construct our
representation of $\G$.  We recall, that $\fA$ is the the right
Hilbert $\D$-module constructed in Subsection~\ref{Ss: repo ker}.

\begin{theorem}\label{T: Grepdef} For $v\in \G$ and $s\in\S$, the
formula,
\[\lambda(v)k_s:= k_{q(v)s} \cocycle(v,s)\] determines a partial
isometry $\lambda(v)\in\L(\fA)$.  Moreover, $\lambda:\G\rightarrow
\L(\fA)$ is a one-to-one representation of $\G$ as partial isometries
in $\L(\fA)$.
\end{theorem}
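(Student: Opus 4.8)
The plan is to verify, in turn, that each $\lambda(v)$ is a well-defined, bounded, adjointable partial isometry on $\fA$, that $\lambda$ is multiplicative, and finally that $\lambda$ is injective. Throughout I write $w:=q(v)$ and use two facts from above: the reproducing identity $u(s)=\innerprod{k_s,u}$ (Proposition~\ref{P: right D mod}) and the factorization $v\,j(s)=j(q(v)s)\,\cocycle(v,s)$ (the Remark after Definition~\ref{D: cocycle like}). The essential observation is that $\cocycle(v,s)\in\P\subseteq\D$ and $K(t,s)=j(s^\dag t\wedge 1)\in\E(\G)=\text{Proj}(\D)$ all live in the \emph{commutative} algebra $\D$, so every identity I must check is an identity in $\D$.

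The computational heart of the proof is the pair of intertwining identities
\begin{gather*}
\cocycle(v,s)^*\,K(ws,wt)\,\cocycle(v,t)=K(s,q(v^\dag v)t),\\
\cocycle(v^\dag,r)^*\,K(w^\dag r,s)=K(r,ws)\,\cocycle(v,s),
\end{gather*}
valid for all $r,s,t\in\S$. In each, the two $K$-factors coincide as idempotents of $\D$ (e.g.\ $K(ws,wt)=j(s^\dag w^\dag w\,t\wedge 1)=K(s,q(v^\dag v)t)$, since $q(v^\dag v)=w^\dag w$), so each reduces to a statement of the form $c\,\cocycle(v,s)=\cocycle(v^\dag,r)^*\,c$ (respectively $d\,\cocycle(v,s)^*\cocycle(v,t)=d$) for an explicit idempotent $c,d$. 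I would prove these by the following scheme: express the idempotent via Leech's Lemma~\ref{L: Leech} as the source of a meet, use the Leech factorization $r\wedge ws=re=(ws)e$ together with the order-preservation of $j$ (Lemma~\ref{L: orderpres}(b), Corollary~\ref{C: orderpres}) to absorb the idempotent into $j$ of a smaller element, substitute $v\,j(s)=j(ws)\cocycle(v,s)$, and apply the conjugation rule $j(g)\,v=v\,j(w^\dag g\,w)$ (valid because $g:=\ran(r\wedge ws)\leq ww^\dag$ lies below the range idempotent of $v$). After these moves both sides collapse to $v\,j(w^\dag g\,w\,s)=v\,j(w^\dag g\,ww^\dag r)$, and a short calculation shows both arguments of $j$ equal $w^\dag re$ in $\S$. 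This reduction to an elementary equality of idempotents in $\S$ is, I expect, the main obstacle: the section $j$ is \emph{not} multiplicative, so $j$-terms cannot be combined naively, and the bookkeeping with source and range idempotents must be done carefully.

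Granting the two identities, the remaining steps are routine. Well-definedness and boundedness of $\lambda(v)$ follow simultaneously from the first identity, which gives $\innerprod{\lambda(v)u,\lambda(v)u}=\innerprod{u,\lambda(v^\dag v)u}$ on $\fA_0$, where $\lambda(v^\dag v)\colon k_s\mapsto k_{q(v^\dag v)s}$ is a self-adjoint idempotent on $\fA_0$ (self-adjointness is immediate from $(q(v^\dag v)s)^\dag t=s^\dag q(v^\dag v)t$, and idempotency is a direct computation). For such a projection $\innerprod{u,\lambda(v^\dag v)u}\leq\innerprod{u,u}$ in $\D_+$, since $I-\lambda(v^\dag v)$ is again a self-adjoint idempotent; hence $\norm{\lambda(v)u}\leq\norm{u}$, so $\lambda(v)$ kills null combinations and extends to a contraction on $\fA$. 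Applying the same argument to $v^\dag$ shows $\lambda(v^\dag)$ is bounded, and the second identity says precisely that $\innerprod{\lambda(v^\dag)k_r,k_s}=\innerprod{k_r,\lambda(v)k_s}$, so $\lambda(v)\in\L(\fA)$ with $\lambda(v)^*=\lambda(v^\dag)$; as $\lambda(v)^*\lambda(v)=\lambda(v^\dag v)$ is a projection, $\lambda(v)$ is a partial isometry.

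Multiplicativity $\lambda(vw)=\lambda(v)\lambda(w)$ then follows from the cocycle relation $\cocycle(vw,s)=\cocycle(v,q(w)s)\cocycle(w,s)$ (obtained by applying $v$ to $w\,j(s)=j(q(w)s)\cocycle(w,s)$ and using the factorization again), together with the $\D$-linearity of $\lambda(v)$ that is automatic once $\lambda(v)$ is adjointable. For injectivity, first note that $s\mapsto k_s$ is one-to-one: if $K(t,s)=K(t,s')$ for all $t$, then taking $t=s$ and $t=s'$ and applying Lemma~\ref{L: Leech} forces $s=s'$. If $\lambda(v)=\lambda(v')$, then $\lambda(v^\dag v)=\lambda(v'^\dag v')$ yields $q(v^\dag v)=q(v'^\dag v')=:e_0$; evaluating $\innerprod{k_r,\lambda(v)k_s}=\innerprod{k_r,\lambda(v')k_s}$ at $r=ws$ and at $r=q(v')s$ and comparing source idempotents forces $q(v)s=q(v')s$ for all $s$, whence $q(v)=q(v')$ and then $\cocycle(v,s)=\cocycle(v',s)$. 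Since $v=v(v^\dag v)=v\,j(e_0)=j(q(v)e_0)\cocycle(v,e_0)$, these data determine $v$, so $v=v'$.
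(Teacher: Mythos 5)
Your proposal is correct, and it diverges from the paper's proof at exactly the two points where a real choice exists. For well-definedness and boundedness the paper proves no exact intertwining identity: it refines $s_1,\dots,s_N$ into a pairwise meet-orthogonal family via a combinatorial lemma (Lemma~\ref{chop}), rewrites both $\sum c_nk_{s_n}$ and $\sum c_nk_{q(v)s_n}\cocycle(v,s_n)$ over that family, and obtains contractivity termwise from $j(a^\dag w^\dag wa)\leq j(a^\dag a)$, needing nothing beyond Corollary~\ref{C: Leech}. Your route instead proves the exact Gram identity $\innerprod{\lambda(v)u,\lambda(v)u}=\innerprod{u,\lambda(v^\dag v)u}$; both of your displayed identities are true (in each the two $K$-factors coincide, using symmetry of $K$), and your reduction scheme does close them --- e.g.\ for the first, with $e:=t^\dag w^\dag ws\wedge 1$ one has $K(ws,wt)=j(e)$, and $\cocycle(v,s)j(e)=j(e)\cocycle(v,se)=j(e)\cocycle(v,te)=\cocycle(v,t)j(e)$, the middle step because $v^\dag v=j(w^\dag w)$ forces $vj(se)=vj(w^\dag wse)=vj(te)$ by Lemma~\ref{L: Leech}; the second identity is, up to adjoints and symmetry of $K$, exactly the paper's adjointability computation, and your cocycle relation for multiplicativity is the paper's. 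What your route buys is $\lambda(v)^*\lambda(v)=\lambda(v^\dag v)$ at the level of kernels, making the partial-isometry claim immediate, whereas the paper's decomposition is more elementary and is reused later (e.g.\ in Remark~\ref{unitaryFM}). One caveat: as written you invoke $\lambda(v^\dag v)$ and $I-\lambda(v^\dag v)$ as operators on $\fA_0$ \emph{before} well-definedness is available, which is circular; this is repairable by working with tuples rather than operators, since $\innerprod{u,u}-\sum_{n,m}\overline{c_n}c_mK(s_n,e_0s_m)$ is the formal expansion of $\innerprod{(1-Q)u,(1-Q)u}$ and is nonnegative by Lemma~\ref{L: Kpos} applied to the combined tuple $\{(c_n,s_n)\}\cup\{(-c_n,e_0s_n)\}$ (using $K(e_0s,e_0t)=K(s,e_0t)$), or equivalently by the matrix trick $[K(s_n,e_0s_m)]=AD=DA\leq A$ with $D$ a diagonal projection --- the very device the paper uses for the projection $P$ in Proposition~\ref{P:proj in A}. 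Finally, your injectivity argument is genuinely different and in one respect stronger: the paper concludes $q(v_1)=q(v_2)$ by conjugating idempotents and invoking fundamentality of $\S$, while you extract $q(v)s=q(v')s$ by comparing source idempotents of the matrix entries $K(r,q(v)s)\cocycle(v,s)$ at $r=q(v)s$ and $r=q(v')s$ (together with injectivity of $s\mapsto k_s$, which you correctly verify via Lemma~\ref{L: Leech}), so fundamentality is not needed at this step, though the paper assumes it elsewhere in the section, for instance to know the section $j$ is one-to-one.
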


\begin{proof} Fix $v\in\G$, and set $r:=q(v)$.  Given $s_1,\dots,
s_N\in\S$, let $A$ be the set constructed
in Lemma~\ref{chop}.   Choose $c_1,\dots, c_N\in\bbC$.

For $a\in A$ and $1\leq m\leq N$, put
\[A_m:=\{b\in A: b\leq s_m\}\dstext{and} c_a:=\sum\{c_n: a\leq
s_n\}.\] Notice that the elements of $A_m$ are pairwise orthogonal,
and $\bigvee A_m=s_m.$

We first note that
\begin{equation}\label{Eq: Grepdef1} \sum_{n=1}^N c_n
k_{s_n}=\sum_{a\in A} c_a k_{a}.
\end{equation} To see this, for any $t\in\S$,
\begin{equation*} K(t,s_n)=\sum_{a\in A_n} K(t,a).
\end{equation*} Thus,
\begin{align*} \left(\sum_{n=1}^N c_n k_{s_n}\right)(t)&= \sum_{n=1}^N
c_nK(t,s_n) = \sum_{n=1}^N c_n \ \sum_{a\in A_n} K(t,a)\\ &= \sum_{a\in
A} c_a K(t,a) = \left(\sum_{a\in A} c_a k_{a}\right)(t).
\end{align*}

Secondly, we claim
\begin{equation}\label{Eq: Grepdef2} \sum_{n=1}^N c_n
k_{rs_n}\cocycle(v,s_n)=\sum_{a\in A} c_a k_{ra}\cocycle(v,a).
\end{equation} To see this, first notice that if $a\in A$ and $a\leq
s_n$, then using the fact that $j$ is order preserving we have
\begin{equation*} \sigma(v,s_n)K(t,ra)=\sigma(v,s_n)j(a^\dag
a)K(t,ra)=\sigma(v,a)K(t,ra).
\end{equation*} Thus,
\begin{align*} \left(\sum_{n=1}^N c_n
k_{rs_n}\cocycle(v,s_n)\right)(t)&= \sum_{n=1}^N c_n \sigma(v,s_n)
K(t,rs_n) \\ &= \sum_{n=1}^N c_n \left( \sum_{a\in A_n}
\sigma(v,s_n)K(t,ra)\right)\\ &= \sum_{n=1}^N c_n \left( \sum_{a\in
A_n} \sigma(v,a)K(t,ra)\right)\\ &= \sum_{a\in A} c_a
\sigma(v,a)K(t,ra)\\ &=\left(\sum_{a\in A}
c_ak_a\cocycle(v,a)\right)(t),
\end{align*} as desired.

Notice that if $a,b\in A$ are distinct, then
$\innerprod{k_{ra}\cocycle(v,a),
k_{rb}\cocycle(v,b)}=0=\innerprod{k_a,k_b}.$ Thus, using ~\eqref{Eq:
Grepdef2}, then~\eqref{Eq: Grepdef1},
\begin{align*} \innerprod{\sum_{n=1}^N c_n
k_{rs_n}\cocycle(v,s_n),\sum_{n=1}^N c_n
k_{rs_n}\cocycle(v,s_n)}&=\innerprod{\sum_{a\in A} c_a
k_{ra}\cocycle(v,a), \sum_{a\in A} c_a k_{ra}\cocycle(v,a)}\\
&=\sum_{a\in A} |c_a|^2j(a^\dag r^\dag r a)\\ &= \innerprod{\sum_{a\in A}
c_a k_{ra},\sum_{a\in A} c_a k_{ra}}\\ &\leq \sum_{a\in A}
|c_a|^2j(a^\dag a)\\ &= \innerprod{\sum_{a\in A} c_a k_{a},\sum_{a\in
A} c_a k_{a}}\\ &=\innerprod{\sum_{n=1}^N c_n k_{s_n},\sum_{n=1}^N c_n
k_{s_n}}.
\end{align*} Therefore,
\begin{equation*} \norm{\sum_{n=1}^N c_n \lambda(v)k_{s_n}}\leq
\norm{\sum_{n=1}^n c_n k_{s_n}}.
\end{equation*} It follows that we may extend $\lambda(v)$ linearly to
a contractive operator from $\fA_0$ into $\fA$.  Finally extend
$\lambda(v)$ by continuity to a contraction in $\B(\fA)$, the bounded
operators on $\fA$.

We next show that $\lambda(v)$ is adjointable.  Note that, for $s,
t\in \S$, it follows from Lemma~\ref{L: Leech} and Corollary~\ref{C:
Leech} that
\begin{equation*} K(t,rs)=K(r^\dag t,s).
\end{equation*} Further, setting $f=(s^\dag r^\dag t)\wedge 1=j^{-1}(K(t,rs))$, it
follows from Lemma~\ref{L: Leech} that
\begin{equation*} rs\wedge t=rsf=tf
\end{equation*} and
\begin{equation*} s\wedge r^\dag t= sf=r^\dag t f.
\end{equation*} Hence
\begin{equation*} \sigma(v,s)^\dag K(rs,t)=\sigma(v^\dag,t)K(s,r^\dag
t).
\end{equation*} Therefore for any $s,t\in\S$,
\begin{align*} \innerprod{\lambda(v)k_s, k_t} &=
\innerprod{k_{rs}\cocycle(v,s), k_t} =\cocycle(v,s)^\dag K(rs,t) \\ &=
\cocycle(v^\dag, t) K(s,r^\dag t) = \<k_s, \lambda(v^\dag) k_t\>.
\end{align*} This equality implies that $\lambda(v)$ is adjointable
and $\lambda(v)^*=\lambda(v^\dag).$

We now show that $\lambda$ is a homomorphism.  Suppose that
$v_1,v_2\in\G$ and $s\in\S$.  Then
\begin{align*} \lambda(v_1)(\lambda(v_2)k_s)&=
\lambda(v_1)(k_{q(v_2)s} \cocycle(v_2,s))\\ &=
(\lambda(v_1)k_{q(v_2)s}) \cocycle(v_2,s)\\ &= k_{q(v_1v_2)s}\,\,
\cocycle(v_1,q(v_2)s)\,\, \cocycle(v_2,s).
\end{align*} But
\begin{align*} \cocycle(v_1,q(v_2)s)\,\, \cocycle(v_2,s)& =
j(q(v_1)q(v_2)s))^\dag v_1j(q(v_2)s) \,\, j(q(v_2)s)^\dag v_2j(s)\\ &=
j(q(v_1v_2)s))^\dag v_1j(q(v_2)s) \,\, j(s^\dag q(v_2)^\dag) v_2j(s)\\
&= j(q(v_1v_2)s))^\dag v_1 (v_2j(ss^\dag) v_2^\dag) v_2j(s)\\ &=
j(q(v_1v_2)s))^\dag v_1 v_2 v_2^\dag v_2 j(ss^\dag) j(s)\\
&=j(q(v_1v_2)s))^\dag v_1v_2j(s)=\cocycle(v_1v_2,s).
\end{align*} Hence $\lambda(v_1)\lambda(v_2)k_s=\lambda(v_1v_2)k_s$.
As $\spn\{k_s: s\in\S\}$ is dense in $\fA$, we conclude that
$\lambda(v_1v_2)=\lambda(v_1)\lambda(v_2).$

It follows that for every $e\in\E(\G)$, $\lambda(e)$ is a projection.
Furthermore, for $v\in \G$, $\lambda(v)$ is a partial isometry because
$\lambda(v)^*=\lambda(v^\dag)$.

It remains to show that $\lambda$ is one-to-one.  We first show that
$\lambda|_{\E(\G)}$ is one-to-one.  So suppose $e, f\in\E(\S)$ and
$\lambda(j(e))=\lambda(j(f))$.  Then for every $s\in\S$,
$k_{es}\cocycle(j(e),s)=k_{fs}\cocycle(j(f),s)$, whence
$k_{es}j(s^\dag e s)=k_{fs}j(s^\dag fs)$.  Taking $s=1$ gives
$k_ej(e)=k_fj(f)$.  Evaluating these functions at $t=1$ gives
$j(e)=j(f)$, so $\lambda|_{\E(\G)}$ is one-to-one.

Now suppose $v_1,v_2\in\G$ and $\lambda(v_1)=\lambda(v_2)$.  Then
\[\lambda(v_1^\dag v_1)=\lambda(v_1^\dag v_2)=\lambda(v_1^\dag
v_2)^*=\lambda (v_2^\dag v_1)=\lambda(v_2^\dag v_2).\] Likewise,
\[\lambda(v_1v_1^\dag) =\lambda(v_1v_2^\dag)
=\lambda(v_2v_1^\dag)=\lambda(v_2v_2^\dag).\] Hence $v_1^\dag
v_1=v_2^\dag v_2$ and $v_1v_1^\dag =v_2v_2^\dag$.  For any
$e\in\E(\S)$, we have
\begin{align*}
\lambda(v_1 j(e) v_1^\dag)&= \lambda(v_1 v_1^\dag v_1 j(e) v_1^\dag
v_1 v_1^\dag) = \lambda(v_1 v_2^\dag v_2 j(e) v_2^\dag v_2 v_1^\dag)
=\lambda(v_2v_2^\dag v_2 j(e) v_2^\dag v_2 v_2^\dag)\\
&=\lambda(v_2j(e)
v_2^\dag).
\end{align*}
Hence $v_1j(e)v_1^\dag =v_2j(e)v_2^\dag$.  Since this
holds for every $e\in\E(\S)$ and $\S$ is fundamental, we conclude that
\[q(v_1)=q(v_2).\]

 Put $e:=q(v_1^\dag v_1)$ and $s:=q(v_1)$.  Since the functions
$\lambda(v_1)k_e$ and $\lambda(v_2)k_e$ agree, we obtain, $k_{s}
j(s)^\dag v_1=k_{s} j(s)^\dag v_2 $.  Evaluating these functions at
$t=s$ gives, $j(s)^\dag v_1=j(s)^\dag v_2$.  Now multiply each side of
this equality on the left by $j(s)$ to obtain $v_1=v_2$.
\end{proof}

We recall some facts about interior tensor products which may be found
in~\cite[pages 38--44]{LanceHiC*Mo}.  We will only need the interior
tensor product of $\fA$ with a Hilbert space.  If $\H$ is a Hilbert
space and $\pi:\D\rightarrow \H$ is a $*$-representation, the balanced
tensor product of $\fA\otimes_\D \H$ is the quotient of the algebraic
tensor product of $\fA$ with $\H$ by the subspace generated by
$\{ud\otimes \xi -u\otimes \pi(d)\xi: u\in \fA, d\in \D, \xi\in\H\}$.
The balanced tensor product admits a semi-inner product given by
\[\innerprod{u_1\otimes \xi_1,u_2\otimes \xi_2}= \innerprod{\xi_1,
\pi(\innerprod{u_1,u_2})\xi_2}.\] Let $N=\{x\in \fA\otimes_\D \H:
\innerprod{x,x}=0\}$.  The completion of $(\fA\otimes_\D \H)/N$ yields
the \textit{interior tensor product} of $\fA$ with $\H$, and is
denoted $\fA\otimes_\pi \H$.  Notice this is a Hilbert space.

Recall also that there is a $*$-representation $\pi_*:
\L(\fA)\rightarrow \B(\fA\otimes_\pi \H)$ given by
\begin{equation}\label{inducedpi} \pi_*(T)(u\otimes \xi)=(Tu)\otimes
\xi.
\end{equation} This representation is strictly continuous on the unit
ball of $\L(\fA)$ and is faithful whenever $\pi$ is faithful~\cite[p.\
42]{LanceHiC*Mo}.
The following is now immediate.

\begin{corollary}\label{C: HilbertSpRep} Let let $\pi:\D\rightarrow
  \bh$ be a $*$-representation of $\D$ on the Hilbert space $\H$.
  Then $\lambda_\pi:= \pi_*\circ \lambda$ is a representation of $\G$
  by partial isometries on $\fA\otimes_\pi \H$.  If $\pi$ is faithful,
  then $\lambda_\pi$ is one-to-one.
\end{corollary}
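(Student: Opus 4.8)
The plan is to obtain $\lambda_\pi$ purely as a composition of two maps that have already been fully analyzed, so that essentially all of the work is inherited from Theorem~\ref{T: Grepdef} and from the recalled properties of the induced map $\pi_*$. By Theorem~\ref{T: Grepdef}, $\lambda\colon\G\to\L(\fA)$ is a one-to-one semigroup homomorphism carrying each $v\in\G$ to a partial isometry $\lambda(v)\in\L(\fA)$, with $\lambda(v)^*=\lambda(v^\dag)$. On the other hand, the map $\pi_*\colon\L(\fA)\to\B(\fA\otimes_\pi\H)$ of~\eqref{inducedpi} is a $*$-representation. The first step is therefore simply to observe that $\lambda_\pi=\pi_*\circ\lambda$ is a composition of semigroup homomorphisms, hence itself a homomorphism of $\G$ into $\B(\fA\otimes_\pi\H)$.

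Second, I would check that $\lambda_\pi(v)$ is a partial isometry for each $v\in\G$. This is automatic from the fact that a $*$-homomorphism preserves the algebraic identity characterizing partial isometries: since $\lambda(v)\lambda(v)^*\lambda(v)=\lambda(v)$ and $\pi_*$ respects both products and adjoints, applying $\pi_*$ yields $\lambda_\pi(v)\lambda_\pi(v)^*\lambda_\pi(v)=\lambda_\pi(v)$, so $\lambda_\pi(v)$ is a partial isometry in $\B(\fA\otimes_\pi\H)$. Thus $\lambda_\pi$ is a representation of $\G$ by partial isometries on the Hilbert space $\fA\otimes_\pi\H$.

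For the injectivity statement, I would invoke the two relevant injectivity facts directly. Theorem~\ref{T: Grepdef} gives that $\lambda$ is one-to-one, and the recalled properties of the interior tensor product construction guarantee that $\pi_*$ is faithful whenever $\pi$ is faithful (see~\cite[p.~42]{LanceHiC*Mo}). A composition of injective maps is injective, so when $\pi$ is faithful, $\lambda_\pi=\pi_*\circ\lambda$ is one-to-one. There is no real obstacle here, as the entire substance has been front-loaded into Theorem~\ref{T: Grepdef} and the cited behavior of $\pi_*$; the only point requiring any attention is that \emph{faithful} for $\pi_*$ means injectivity as a $*$-homomorphism, which is precisely what is needed to conclude injectivity of the composition.
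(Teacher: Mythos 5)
Your proposal is correct and is essentially the paper's own argument: the paper recalls precisely the needed facts about $\pi_*$ (that it is a $*$-representation of $\L(\fA)$, faithful when $\pi$ is faithful) immediately before the corollary and then declares the result ``immediate,'' which is exactly the composition argument you spell out, including the observation that a $*$-homomorphism preserves the relation $\lambda(v)\lambda(v)^*\lambda(v)=\lambda(v)$ characterizing partial isometries. Your write-up merely makes explicit the routine verifications the paper leaves to the reader, so there is nothing to add or correct.
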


\begin{remark}\label{unitaryFM}  In this remark, we continue 
  to outline a comparison of our constructions with those of Feldman
  and Moore.  Full details are left to the interested reader.

  Assume that $(X,\B)$ is a standard Borel space, $R\subseteq X\times
  X$ is a Borel equivalence relation (with countable equivalence
  classes), $\mu$ is a quasi-invariant measure on $X$, $\S$ is the
  Cartan inverse monoid of all partial Borel isomorphisms on $X$ whose
  graphs are contained in $R$, and $\nu$ is right-counting measure on
  $R$ (see~\cite[Theorem~2]{FeldmanMooreErEqReI}).  Let $c$ be a
  2-cocycle on the equivalence relation $R$.  As
  in~\cite[Section~2]{FeldmanMooreErEqReII}, we assume that $c$ is
  normalized (and hence skew-symmetric) in the sense
  of~\cite[page~314]{FeldmanMooreErEqReI}.  Using the Feldman-Moore
  construction (c.f.~\cite[Section~2]{FeldmanMooreErEqReII}), form the
  Cartan pair $(M(R,c), D(R,c))$. Recall that $M(R,c)$ consists of
  certain measurable functions on $R$ and that $D(R,c)$ are those
  which are supported on the diagonal $\{(x,x): x\in X\}$ of $R$.
  Note that $D(R,c)\simeq L^\infty(X,\mu)$.

  As done in Section~\ref{S: ext of a pair}, let $\G=\G\N(M(R,c),
  D(R,c))$ and let $\P$ be the partial isometries in $D(R,c)$.  For
  $v\in \G$, the map \[D(R,c) v^*v\ni dv^*v \mapsto vdv^*\in D(R,c)
  vv^*\] is an isomorphism of $D(R,c) v^*v$ onto $D(R,c) vv^*$ and
  hence arises from a partial Borel isomorphism of $X$.  This partial
  Borel isomorphism is $q(v)$.  Finally, let $j:\S\rightarrow \G$ be
  given by $j(s):=\chi_{\Gr(s)}$.  We have now explicitly described
  the the various components of the extension,
\[\xymatrix{ \P\ar@{^{(}->}[r] &\G\ar@/^/[r]^q
&\S\ar@/^/[l]^j}\] 
and the section $j$ associated with a Cartan pair
  $(M(R,c),D(R,c))$ presented using the Feldman-Moore construction.
 
  Next, we give a formula for the ``cocycle-like'' function of
  Definition~\ref{D: cocycle like} in terms of the Feldman-Moore data.
  For $v\in\G$, we know $h_v:=j(q(v))^\dag v\in D(R,c)$ and
  $v=j(q(v)) h_v$.  Using the fact that $c$ is a normalized cocycle,
  for almost all $(x,y)\in R$ we obtain
\[v(x,y)=\chi_{\Gr(q(v))}(x,y) h_v(y,y).\]  Now for $s\in \S$,
$\sigma(v,s)=j(q(v)s)^\dag v j(s)$.  A computation then shows that
 for $(x,y)\in R$,
\begin{align}
(vj(s))(x,y)&=\chi_{\Gr(q(v)s)}(x,y) \,\, h_v(s(y),s(y)) \,\,
c((q(v)s)(y), s(y),y),  \notag\\ 
\intertext{and, (again using the fact that $c$ is normalized)}
\sigma(v,s)(x,y)&
= \chi_{\Gr(s^\dag q(v^\dag v) s)}(x,y)\,\, 
h_v(s(y),s(y)) \,\, c((q(v)s)(y), s(y),y).\label{sigmaformula}
\end{align}

  Let $\pi$ be the representation of $D(R,c)$ on 
   $\H:=L^2(X,\mu)$ as multiplication operators: for $f\in D(R,c)$,
  $\xi\in L^2(X,\mu)$ and $x\in X$, $(\pi(f)\xi)(x)=f(x,x)\xi(x)$.
  Clearly $\pi$ is a faithful, normal representation of $D(R,c)$.
  
Our next task is to 
  observe that the representation $\lambda_\pi$ of $\G$ on
  $\B(\fA\otimes_\pi \H)$ is
  unitarily equivalent to the identity representation of $\G$ on $L^2(R,\nu)$. 

  For $s\in\S$ and $\xi\in L^2(X,\mu)$, let
  $F_{s,\xi}(x,y):=\xi(y)\chi_{\Gr(s)} (x,y).$ Then $F_{s,\xi}\in
  L^2(R,\nu)$.  A computation (using Lemma~\ref{chop} and similar to
  that in the first paragraph of Theorem~\ref{T: Grepdef}) shows for
  $s_1,\dots, s_N\in\S$ and $\xi_1,\dots, \xi_N\in L^2(X,\mu)$,
\[\norm{\sum_{n=1}^N k_{s_n}\otimes \xi_n}_{\fA\otimes_\pi \H}=
\norm{\sum_{n=1}^N F_{s_n,\xi_n}}_{L^2(R,\nu)}.\]  It follows
there is an isometry $U\in \B(\fA\otimes_\pi \H, L^2(R,\nu))$
which satisfies $U(k_s\otimes \xi)=F_{s,\xi}$. In fact, $U$ is a
unitary operator.

A computation using~\eqref{sigmaformula} shows that for $v\in\G$,
$s\in\S$ and $\xi\in L^2(X,\mu)$, 
\[vF_{s,\xi}=F_{q(v)s,\pi(\sigma(v,s))\xi}.\]  Hence

\begin{align*}
U\lambda_\pi(v) (k_s\otimes \xi)&=U (k_{q(v)s}\sigma(v,x)\otimes \xi)=
U(k_{q(v)s}\otimes\pi(\sigma(v,x))\xi)\\
&=F_{q(v)s,\pi(\sigma(v,s))\xi} =v F_{s,\xi},
\end{align*}
so that 
$U\lambda_\pi(v)U^*=v$, so that $\lambda_\pi$ is unitarily equivalent
to the identity representation, as desired.

\end{remark}

Remark~\ref{unitaryFM} shows that our construction of the
representation $\lambda_\pi$  includes the Feldman-Moore construction as
a special case.
Of course, we have yet to show that the von Neumann algebra generated
by $\lambda_\pi(\P)$ is a Cartan MASA in the von Neumann algebra
generated by $\lambda_\pi(\G)$.  We do this in the next section.

\section{The Cartan pair associated to an extension} 
\label{S: Cpair}
In this section we construct a Cartan pair from an extension.  We will
show in Theorem~\ref{T: indeppsi} that the extension associated to
this Cartan pair is equivalent to the original extension.  Thus,
Theorem~\ref{T: same data} and Theorem~\ref{T: indeppsi} show that
there is a one-to-one correspondence between equivalence classes of
Cartan pairs and equivalence classes of extensions of Cartan inverse
monoids.

Let $\S$ be a Cartan inverse monoid, and let $\P$ be the partial
isometries in $\D:=C(\widehat{\E(\S)})$.  Because $\widehat{\E(\S)}$
is assumed to be a hyperstonean space, $\D$ is $*$-isomorphic to an
abelian von Neumann algebra.    In the sequel, we assume that
$\D$ is an abelian von Neumann algebra.  Let
\begin{equation*} \P\hookrightarrow \G \xrightarrow{q} \S
\end{equation*} be an extension, and fix an order preserving section
$j\colon \S \rightarrow \G$.

We denote by $\fA$ the right Hilbert $\D$-module constructed in
Subsection~\ref{Ss: repo ker}.  Let $\pi$ be a faithful, normal
representation of $\D$, and let
\begin{equation*} \lambda_\pi \colon \G \rightarrow \B( \fA
\otimes_\pi \H)
\end{equation*} be the representation of $\G$ by partial isometries,
as constructed in Theorem~\ref{T: Grepdef} and Corollary~\ref{C:
HilbertSpRep}.

\begin{definition} Let
\begin{equation*} \M_q = (\lambda_\pi(\G))'' \text{ and } \D_q =
(\lambda_\pi(\E(\G))''.
\end{equation*}
\end{definition}

Our goal in this section is to show that $(\M\sge,\D\sge)$ is a Cartan
pair.  The definition of $\M\sge$ and $\D\sge$ depends upon the choice
of $\pi$ and, because $\lambda:\G\rightarrow \L(\fA)$ depends on the
choice of $j$, $\M\sge$ and $\D\sge$ also depend on $j$.  However, we
shall see in Theorem~\ref{T: indeppsi} that the \textit{isomorphism
class} of $(\M\sge,\D\sge)$ depends only on the extension
$\P\hookrightarrow \G\xrightarrow{q}\S$ and not upon $\pi$ or $j$.
We begin by constructing the conditional expectation.

\subsection{A conditional expectation} In this subsection we construct
the faithful, normal conditional expectation from $\M_q$ onto $\D_q$.
This expectation will be constructed from the natural map from $\S$
onto $\E(\S)$: the map
\begin{equation*} s\mapsto s\wedge 1.
\end{equation*} This is an idempotent map from $\S$ onto $\E(\S)$,
which is the identity on $\E(\S)$.

This idempotent map induces an idempotent mapping from $\G$ to $\P$,
which will be the identity on $\P$.  We call this map $\Delta$, and
define it by setting
\begin{equation*} \Delta(v):=v j(q(v)\wedge 1),
\end{equation*} for all $v\in \G$.  First note that
\begin{equation*} q(\Delta(v))=q(v)(q(v)\wedge 1)= q(v)\wedge 1 \in
\E(\S),
\end{equation*} thus $\Delta(v)\in\P$ for all $v\in \G$.  Further, if
$v\in \P$ then $q(v)\in \E(\S)$, thus
\begin{equation*} \Delta(v) = v j(q(v)\wedge 1)=v j(q(v))=v.
\end{equation*}

Our goal now is to show that, given $v\in\G$, the formula,
\[E(\lambda_\pi(v)):=\lambda_\pi(\Delta(v))\] extends to a faithful
conditional expectation $E:\M\sge\rightarrow \D\sge$.  It will take
 a bit more machinery before we can do this.  

Let
\begin{equation*} \fB=\overline{\spn} \{ k_e \colon
e\in\E(\S)\}\subseteq\fA.
\end{equation*} Note that $\fB$ is a right Hilbert $\D$-submodule of
$\fA$.  Proposition~\ref{P: universal1} shows that $\lambda|_{\E(\G)}$
extends to a $*$-monomorphism $\pil:\D\rightarrow \L(\fA)$.  For any
$e,f \in\E(\S)$,
\begin{gather*}
\pil(j(f))k_e=k_{fe}\sigma(j(e),f)=k_{fe}=k_{ef}=k_{e}j(f),
\intertext{and for $s\in\S$,}
\pil(j(f))k_s=k_{s}\, \, j(s^\dag f s).
\end{gather*} It follows that, for any $\xi \in \fB$, $d\in \D$ and
$s\in \S$,
\begin{equation*} \pil(d) \xi=\xi d \dstext{and}\pil(d)k_s=k_s j(s^\dag d s).
\end{equation*} That is, the representation $\pil(\cdot)$, restricted
to $\fB$, is given by the right module action of $\D$ on $\fB$.

\begin{proposition}\label{P:proj in A} For $s\in \S$, the map
$k_s\mapsto k_{s\wedge 1}$ uniquely determines a projection $P \in
\L(\fA)$ with range $\fB$.  Moreover, for each $v\in\G$,
	\begin{equation*} P\lambda(v) P=\lambda(\Delta(v))P.
	\end{equation*}
\end{proposition}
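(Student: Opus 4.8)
The plan is to realize $P$ as the orthogonal projection onto $\fB$, but since a Hilbert $\D$-module need not admit orthogonal projections onto closed submodules, I would construct it through the scalar reproducing-kernel spaces $\H_\rho$ of Subsection~\ref{Ss: repo ker}. The single algebraic fact driving everything is that for every $e\in\E(\S)$ and $s\in\S$,
\[\innerprod{k_e,k_s}=\innerprod{k_e,k_{s\wedge1}},\]
which, since $j$ is injective, amounts to the $\S$-identity $s^\dag e\wedge1=(s\wedge1)e$. I would prove this first: by Lemma~\ref{L: Leech} one has $s^\dag e\wedge1=s\wedge e$ (as $s\wedge e\le1$ is idempotent), and since $e\le1$ a short meet-semilattice argument gives $s\wedge e=(s\wedge1)e$. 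Consequently $k_s-k_{s\wedge1}$ is orthogonal to every $k_e$, and $k_{s\wedge1}\in\fB$.

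For the construction of $P$, fix a pure state (character) $\rho$ on $\D$ and let $Q_\rho$ be the orthogonal projection in $\H_\rho$ onto $\overline{\spn}\{(k_e)_\rho:e\in\E(\S)\}$. Reading the orthogonality above through $\rho$ gives $Q_\rho(k_s)_\rho=(k_{s\wedge1})_\rho$, hence $Q_\rho u_\rho=(Pu)_\rho$ for $u=\sum_i c_i k_{s_i}\in\fA_0$, where $Pu:=\sum_i c_i k_{s_i\wedge1}$. Because $\norm{u}^2=\sup_\rho\norm{u_\rho}_\rho^2$ and each $Q_\rho$ is a contraction, $P$ is well defined on $\fA_0$ (if $u=0$ then every $u_\rho=0$, so every $(Pu)_\rho=Q_\rho u_\rho=0$) and contractive, so it extends to $\fA$ with range in $\fB$. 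Self-adjointness then falls out of the same picture: using Proposition~\ref{P: right D mod}, for all pure $\rho$ and $u,v\in\fA_0$,
\[\rho(\innerprod{Pu,v})=\innerprod{Q_\rho u_\rho,v_\rho}_\rho=\innerprod{u_\rho,Q_\rho v_\rho}_\rho=\rho(\innerprod{u,Pv}),\]
and since characters separate points of $\D$, $\innerprod{Pu,v}=\innerprod{u,Pv}$; by density this holds on $\fA$, so $P\in\L(\fA)$ with $P=P^*$. Finally $Pk_e=k_{e\wedge1}=k_e$ for $e\in\E(\S)$, so $P$ is the identity on $\fB$; combined with $\ran P\subseteq\fB$ this gives $P^2=P$ and $\ran P=\fB$.

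For the covariance $P\lambda(v)P=\lambda(\Delta(v))P$, since $P$ is $\D$-linear with range $\fB$ and all maps are continuous, it suffices to check $P\lambda(v)k_e=\lambda(\Delta(v))k_e$ for $e\in\E(\S)$. On the left, Theorem~\ref{T: Grepdef} gives $\lambda(v)k_e=k_{q(v)e}\,\sigma(v,e)$, and $\D$-linearity of $P$ yields $P\lambda(v)k_e=k_{(q(v)e)\wedge1}\,\sigma(v,e)$. On the right, I would use that $\lambda$ is a homomorphism and $\Delta(v)=v\,j(q(v)\wedge1)$: since $j(q(v)\wedge1)\in\E(\G)$, the computation preceding Proposition~\ref{P:proj in A} together with Corollary~\ref{C: Leech} gives $\lambda(j(q(v)\wedge1))k_e=\pil(j(q(v)\wedge1))k_e=k_{(q(v)\wedge1)e}$, whence, using $q(v)(q(v)\wedge1)=q(v)\wedge1$,
\[\lambda(\Delta(v))k_e=\lambda(v)k_{(q(v)\wedge1)e}=k_{(q(v)\wedge1)e}\,\sigma(v,(q(v)\wedge1)e).\]
The two subscripts coincide by the $\S$-identity $(q(v)e)\wedge1=(q(v)\wedge1)e$, proved exactly as in the first paragraph. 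Writing $g:=(q(v)\wedge1)e\le e$, it remains to see $k_g\,\sigma(v,e)=k_g\,\sigma(v,g)$; this follows from the restriction identity $\sigma(v,g)=\sigma(v,e)\,j(g)$ established in the proof of Theorem~\ref{T: Grepdef}, the commutativity of $\D$, and $k_g\,j(g)=k_g$ from Corollary~\ref{C: Leech}.

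The main obstacle is the existence of $P$ as an adjointable projection: one cannot invoke orthogonal projection onto a closed submodule of a Hilbert module, so the reduction to the scalar spaces $\H_\rho$ and the sup-over-characters formula for the module norm are essential, and this is precisely where the positivity of $K$ (Lemma~\ref{L: Kpos}) is being used. The secondary, purely computational, difficulty is aligning the cocycle-like factors in the covariance, which I handle by transporting everything onto $j(\S)$ via the homomorphism property of $\lambda$ and the order-restriction identity for $\sigma$.
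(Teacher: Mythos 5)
Your proposal is correct, but it reaches the conclusion by a genuinely different route than the paper in both halves. For the existence of $P$, the paper stays entirely inside the module: it proves the contractivity estimate $\innerprod{v,v}\leq\innerprod{u,u}$ via a matrix inequality in $M_N(\D)$ (writing $B=AD=DA$ with $A=[K(s_m,s_n)]\geq 0$ from Lemma~\ref{L: Kpos} and $D$ a diagonal projection, so $0\leq B\leq A$), and it proves adjointability by the direct lattice identity $j(t^\dag(s\wedge 1)\wedge 1)=j(s^\dag(t\wedge 1)\wedge 1)$, both idempotents being $s\wedge t\wedge 1$. You instead fiber over characters: the projections $Q_\rho$ in the scalar spaces $\H_\rho$, the formula $\norm{u}^2=\sup_\rho\norm{u_\rho}_\rho^2$ (valid since $\innerprod{u,u}\geq 0$ in the commutative algebra $\D$), and the fact that characters separate points of $\D$ give well-definedness, contractivity, and self-adjointness simultaneously; this is more conceptual and makes transparent exactly where positivity of $K$ enters, at the cost of routing everything through the $\H_\rho$ machinery, whereas the paper's matrix trick is more self-contained. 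Note that the combinatorial core is shared: your identity $s^\dag e\wedge 1=s\wedge e=(s\wedge 1)e$ is the same fact the paper uses in its adjointability computation. For the covariance, the paper verifies $P\lambda(v)Pk_s=\lambda(\Delta(v))Pk_s$ on \emph{every} $k_s$, $s\in\S$, by a longer chain of manipulations in $\G$; your reduction to checking $P\lambda(v)k_e=\lambda(\Delta(v))k_e$ for $e\in\E(\S)$ only, justified by $\ran P=\fB$ and boundedness, is legitimate and noticeably shorter. One small caveat: the restriction identity $\sigma(v,g)=\sigma(v,e)j(g)$ for $g\leq e$ appears in the proof of Theorem~\ref{T: Grepdef} only implicitly (there it is used multiplied by $K(t,ra)$, with $a\leq s_n$ and $K(t,ra)\leq j(a^\dag a)$), so strictly you should verify it once: with $x:=vj(g)$ one has $\sigma(v,e)j(g)=j(q(v)e)^\dag x$, and since $(j(q(v)e)^\dag x)^\dag(j(q(v)e)^\dag x)\leq x^\dag x=j(g\,q(v)^\dag q(v)g)$ one gets $j(q(v)e)^\dag x=j(q(v)g)^\dag x=\sigma(v,g)$; with that line added, your argument is complete.
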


\begin{proof} Let $N\in\bbN$, let $c_1,\dots, c_N\in\bbC$ and
$s_1,\dots , s_N\in\S$.  Put $u=\sum_{n=1}^N c_n k_{s_n}$ and
$v=\sum_{n=1}^N c_n k_{(s_n\wedge 1)}$.  We claim that, as elements of
$\D$,
\begin{equation*}\label{Edef1} 
\innerprod{v,v}\leq \innerprod{u,u}.
\end{equation*} Indeed, let $B\in M_N(\D)$ be the matrix whose $mn$-th
entry is $j((s_m\wedge 1)(s_n\wedge 1))$, let $A\in M_N(\D)$ be the
matrix whose $mn$-th entry is $j(s_n^\dag s_m\wedge 1)=K(s_m,s_n)$, and let $D\in
M_N(\D)$ be the diagonal matrix whose $n$-th diagonal entry is
$s_n\wedge 1$.  Lemma~\ref{L: Kpos} shows that $A\geq 0$,
and it is clear that $D$ is a projection.  Corollary~\ref{C:
Leech} implies that
\[B=AD=DA.\] In particular, $0\leq B\leq A$, so that if $C\in M_{N1}(\D)$ is
the column matrix whose $n1$-th entry is $c_nI$, we obtain
\[\innerprod{v,v}=C^*BC\leq C^*AC=\innerprod{u,u},\] as claimed.

It follows that $k_s\mapsto k_{(s\wedge 1)}$ extends linearly to
contraction $P$ on $\fA$.  Let $s,t\in \S$ and put $e=s\wedge t\wedge
1$.  By Lemma~\ref{C: Leech},
 $e=e^\dag e=t^\dag (s\wedge 1)\wedge 1= s^\dag(t\wedge
1)\wedge 1$.  
Hence, 
\begin{align*} \<P k_s, k_t\> &= \< k_{s\wedge 1}, k_t\> = k_t(s\wedge
  1) = j(t^\dag(s\wedge 1)\wedge 1)\\
  &=j(s^\dag (t\wedge 1)\wedge 1)=k_s(t\wedge 1)= \<
  k_s, k_{t\wedge 1}\> = \<k_s, Pk_t\>.
\end{align*} It follows that $P$ is adjointable.  As $P$ is
idempotent, $P$ is a projection in $\L(\fA)$.  Obviously, $\ran(P)=\fB$.

Let $s\in\S$ and $v\in\G$.  Set $r=q(v)$.  Then,
\begin{align*} P\lambda(v) Pk_s&= Pk_{r(s\wedge 1)} \sigma(v, s\wedge
1) = Pk_{r(s\wedge 1)} j(s\wedge 1) j(r)^\dag v = Pk_{r} j(s\wedge 1)
j(r)^\dag v\\ &=k_{r\wedge 1}j(s\wedge 1) j(r)^\dag v = k_1 j(r\wedge
1)j(s\wedge 1) j(r)^\dag v= k_{s\wedge 1} j(r\wedge 1)v.
\intertext{On the other hand,} \lambda(\Delta(v))Pk_s&=
\lambda(vj(r\wedge 1))k_{s\wedge 1}=\lambda(v)\lambda(j(r\wedge
1))k_{s\wedge 1} =\lambda(v) k_{s\wedge 1} j(r\wedge 1)\\ &=
k_{r(s\wedge 1)}\sigma(v,s\wedge 1) j(r\wedge 1)= k_{r(s\wedge 1)}
j(s\wedge 1) j(r)^\dag v j(r\wedge 1)\\ &= k_{r(s\wedge 1)} j(s\wedge
1) v j(r\wedge 1)= k_{r(s\wedge 1)} v j(r\wedge 1)= k_{(s\wedge 1)}
j(r\wedge 1) v.
\end{align*} Thus $P\lambda(v)Pk_s=\lambda(\Delta(v))Pk_s$.  As this
holds for every $s\in\S$, it follows that
$P\lambda(v)P=\lambda(\Delta(v))P$.
\end{proof}

\begin{lemma}\label{L: V} Define $V:\H\rightarrow \fA\otimes_\pi \H$
by $V\xi := k_1\otimes \xi$.  Then $V$ is an isometry for which the
following properties hold:
\begin{enumerate}
\item for every $s\in \S$ and $\xi\in\H$, $V^*(k_s\otimes \xi)=
\pi(j(s\wedge 1))\xi$;
\item $VV^*=\pi_*(P)$;
\item for every $v\in\G$, $V^*\lambda_\pi(v)V=\pi(\Delta(v)).$
\end{enumerate}
\end{lemma}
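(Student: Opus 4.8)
The plan is to verify the three claims by reducing each to a computation on the elementary tensors $k_s\otimes\xi$, which span a dense subspace of $\fA\otimes_\pi\H$, using repeatedly the identity $\langle k_s,k_t\rangle=K(s,t)=j(t^\dag s\wedge 1)$, the balancing relation $ud\otimes\xi=u\otimes\pi(d)\xi$, and Corollary~\ref{C: Leech}. First I would dispose of the isometry claim: since $j(1)=1$ we have $\langle k_1,k_1\rangle=K(1,1)=j(1\wedge 1)=1$, so $\langle V\xi,V\eta\rangle=\langle\xi,\pi(1)\eta\rangle=\langle\xi,\eta\rangle$, and $V$ is isometric.

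For (a) I would read off $V^*$ from the defining relation $\langle V^*(k_s\otimes\eta),\xi\rangle_\H=\langle k_s\otimes\eta,k_1\otimes\xi\rangle=\langle\eta,\pi(\langle k_s,k_1\rangle)\xi\rangle$. Here $\langle k_s,k_1\rangle=K(s,1)=j(s\wedge 1)$, which is a projection in $\D$ because $s\wedge 1\in\E(\S)$; hence $\pi(j(s\wedge 1))$ is self-adjoint and comparing gives $V^*(k_s\otimes\xi)=\pi(j(s\wedge 1))\xi$. For (b) I would use (a), the balancing relation, and the special case $k_1 j(s\wedge 1)=k_{s\wedge 1}$ of Corollary~\ref{C: Leech} to compute, on elementary tensors, $VV^*(k_s\otimes\xi)=k_1\otimes\pi(j(s\wedge1))\xi=k_1 j(s\wedge1)\otimes\xi=k_{s\wedge1}\otimes\xi$, which equals $\pi_*(P)(k_s\otimes\xi)=(Pk_s)\otimes\xi$ by Proposition~\ref{P:proj in A}; since both sides are bounded and agree on a dense set, $VV^*=\pi_*(P)$.

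The substantive part is (c), and the cleanest route is to leverage Proposition~\ref{P:proj in A} rather than expand $\lambda_\pi(v)$ head-on. Applying the $*$-representation $\pi_*$ to $P\lambda(v)P=\lambda(\Delta(v))P$ and substituting (b) yields $VV^*\lambda_\pi(v)VV^*=\lambda_\pi(\Delta(v))VV^*$; compressing by $V^*$ on the left and $V$ on the right and using $V^*V=I$ collapses this to $V^*\lambda_\pi(v)V=V^*\lambda_\pi(\Delta(v))V$. It then remains to compute $V^*\lambda_\pi(p)V$ for $p:=\Delta(v)\in\P$. The key observation is that for $p\in\P$ with $e:=q(p)\in\E(\S)$, the projection $j(e)$ is exactly the support projection $pp^\dag=p^\dag p$ of $p$ (because $q$ is injective on idempotents), so $\sigma(p,1)=j(e)^\dag p=p$ and therefore $\lambda(p)k_1=k_e\,p$; applying (a) gives $V^*\lambda_\pi(p)V\xi=\pi(j(e))\pi(p)\xi=\pi(j(e)p)\xi=\pi(p)\xi$. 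Combining, $V^*\lambda_\pi(v)V=\pi(\Delta(v))$.

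The main obstacle is (c): the bookkeeping needed to transfer Proposition~\ref{P:proj in A}, a statement about the module-level projection $P$, down to the Hilbert-space compression through $V$, together with the identification $j(q(p))=pp^\dag$ for $p\in\P$. As an alternative that bypasses Proposition~\ref{P:proj in A}, one may instead expand $\lambda(v)k_1=k_{q(v)}\sigma(v,1)$ directly and prove the semigroup identity $j(q(v)\wedge 1)\sigma(v,1)=\Delta(v)$; writing $e=q(v)\wedge 1=q(v)^\dag\wedge 1$, this reduces to $j(e)j(q(v))^\dag=j(e)$ together with the commutation $vj(e)=j(e)v$, the latter obtained by daggering the relation $vj(e)=j(e)vj(e)$ and applying it also to $v^\dag$.
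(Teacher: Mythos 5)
Your proposal is correct and follows essentially the same route as the paper: the isometry claim and parts (a) and (b) are verified by the identical elementary-tensor computations (using $\innerprod{k_1,k_1}=1$, the balancing relation, and Corollary~\ref{C: Leech}), and (c) is obtained, as in the paper, by applying $\pi_*$ to the identity $P\lambda(v)P=\lambda(\Delta(v))P$ of Proposition~\ref{P:proj in A}. The only (harmless) variation is in finishing (c): where the paper quotes the calculation $\pi_*(\pil(\Delta(v)))\pi_*(P)=V\pi(\Delta(v))V^*$ (recorded separately as Lemma~\ref{L: copy of D 2}), you compress by $V^*(\cdot)V$ and evaluate $V^*\lambda_\pi(p)V=\pi(p)$ for $p\in\P$ directly via $\lambda(p)k_1=k_{q(p)}\,p$ and the correct identification $j(q(p))=pp^\dag$ (valid since $q|_{\E(\G)}$ is injective and $j|_{\E(\S)}=(q|_{\E(\G)})^{-1}$), which is an equivalent computation.
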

\begin{proof} That $V$ is an isometry follows from the fact that
$\innerprod{k_1,k_1}=I\in\D$.  Indeed, for $\xi\in \H$, we have
\[ \innerprod{V\xi, V\xi}=\innerprod{k_1\otimes \xi, k_1\otimes \xi} =
\innerprod{\xi, \pi(\innerprod{k_1,k_1})\xi}=\innerprod{\xi,\xi}.\]
Notice that for $s\in\S$ and $\xi,\eta\in\H$,
\[\innerprod{V\xi, k_s\otimes \eta}= \innerprod{k_1\otimes \xi,
k_s\otimes \eta}=\innerprod{\xi,\pi(k_s(1))\eta}=\innerprod{\xi,
\pi(j(s^\dag \wedge 1))\eta}.\] Since $s^\dag \wedge 1=s\wedge 1$, we
find that $V^*(k_s\otimes \eta)=\pi(j(s\wedge 1))\eta.$ Hence
$VV^*(k_s\otimes \eta)= k_1\otimes \pi(j(s\wedge 1))\eta= k_{s\wedge
1} \otimes \eta= \pi_*(P)(k_s\otimes \eta)$.  So $VV^*=\pi_*(P)$.

By Proposition~\ref{P:proj in A} we have
\begin{equation*} P\lambda(v) P= \lambda(\Delta(v))P.
\end{equation*} Applying $\pi_*$ to each side of this equality yields
\[\pi_*(P)\lambda_\pi(v)\pi_*(P)=
\pi_*(\lambda(\Delta(v)))\pi_*(P)=\pi_*(\pil(\Delta(v)))\pi_*(P).\]  A
calculation gives
$\pi_*(\pil(\Delta(v)))\pi_*(P)=V\pi(\Delta(v))V^*$, so that 
\[\pi_*(P)\lambda_\pi(v)\pi_*(P)=V\pi(\Delta(v))V^*.\]
Part (c) now follows from parts (a) and (b).
\end{proof}

We will now show that $\D$ and $\D_q$ are isomorphic.  Thus, an
expectation onto a faithful image of $\D$ will give rise to an
expectation onto $\D_q$.  We begin with two lemmas.

\begin{lemma}\label{L: copy of D 1} For $d\in \D$ the map
$U\colon\D\rightarrow \fB$ given by $Ud=\pil(d) k_1$ is an isometry of
$\D$ onto $\fB$.
		Furthermore, the map $d\mapsto U^*P\pil(d) U$ is the regular
representation of $\D$ onto itself.
\end{lemma}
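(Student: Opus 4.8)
The plan is to reduce the whole lemma to the identity, already established just before the statement, that $\pil(d)$ acts on $\fB$ by the right module action: $\pil(d)\xi=\xi d$ for all $\xi\in\fB$ and $d\in\D$. Since $k_1\in\fB$, this immediately gives $Ud=\pil(d)k_1=k_1d$, so that $U$ is simply the right-module map $d\mapsto k_1 d$. The one scalar input I need is $\<k_1,k_1\>=I$: by the reproducing identity $\<k_s,k_t\>=K(s,t)$ from Proposition~\ref{P: right D mod} together with $K(1,1)=j(1^\dag 1\wedge 1)=j(1)=1$, this is immediate.

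For the first assertion I would regard $\D$ as a right Hilbert $\D$-module over itself (inner product $\<c,c'\>=c^*c'$) and compute, using sesquilinearity of the $\D$-valued inner product, $\<Ud_1,Ud_2\>=d_1^*\<k_1,k_1\>d_2=d_1^*d_2$, so $U$ is isometric. For surjectivity onto $\fB$, Corollary~\ref{C: Leech} gives $k_e=k_1 j(e)=U(j(e))$ for each $e\in\E(\S)$, so $\spn\{k_e\colon e\in\E(\S)\}\subseteq\ran U$; since $U$ is a module isometry between complete modules its range is closed, whence $\fB=\overline{\spn}\{k_e\}\subseteq\ran U$, while the reverse inclusion $\ran U\subseteq\fB$ follows from continuity of $U$ and the density of $\D_0=\spn\{j(e)\}$ in $\D$. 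Hence $\ran U=\fB$. Because $\fB=\ran P$ is complemented in $\fA$, $U$ is adjointable, with $U^*U=\mathrm{id}_\D$ and $UU^*=P$.

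For the second assertion I would compute directly. Fix $c\in\D$. Since adjointable operators commute with the right action and $\pil(d)k_1=k_1d$, we have $\pil(d)Uc=\pil(d)(k_1c)=k_1(dc)\in\fB$. As $P$ fixes $\fB$ and $U^*$ restricts to $U^{-1}$ on $\fB$, it follows that $U^*P\pil(d)Uc=U^{-1}(k_1(dc))=dc$. Therefore $d\mapsto U^*P\pil(d)U$ is multiplication by $d$ on $\D$, that is, the regular representation of $\D$ on itself (left and right coincide since $\D$ is abelian). Note that $P$ is in fact redundant here, as $\pil(d)$ already preserves $\fB$; I keep it only to match the statement.

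The computations are routine, and the only points needing real care are structural. First, I must justify that $U$ is adjointable with $U^*U=\mathrm{id}_\D$: this is exactly where the complementedness of $\fB=\ran P$ enters, and I would make it explicit by setting $U^*:=U^{-1}P$ and verifying $\<U^*\xi,c\>=\<\xi,Uc\>$ for $\xi\in\fA$, $c\in\D$, using $PUc=Uc$. Second, I must keep the module conventions consistent — the inner product is conjugate-linear in the first variable and $\D$ acts on the right — so that the isometry computation produces $d_1^*d_2$ rather than $d_1d_2^*$. These are the only spots where an order or adjoint slip could occur; everything else is direct substitution into the already-proven identities.
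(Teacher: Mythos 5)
Your proposal is correct and takes essentially the same route as the paper's proof: both establish the isometry from $\<Ud_1,Ud_2\>=d_1^*\<k_1,k_1\>d_2=d_1^*d_2$ using $\pil(d)k_1=k_1d$, and both obtain the second assertion from the computation $\pil(d)\pil(h)k_1=\pil(dh)k_1=U(dh)$. The only difference is that you spell out two points the paper's two-line proof leaves tacit --- surjectivity of $U$ onto $\fB$ (via $U(j(e))=k_e$, closedness of the isometric range, and norm density of the span of projections in $\D$) and adjointability of $U$ (via $U^*=U^{-1}P$, using that $\fB=\ran P$ is complemented) --- and you handle both correctly.
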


\begin{proof} For any $d\in \D$ we have
\begin{equation*} \< Ud, Ud\>=\<\pil(d)k_1,\pil(d)k_1\>=\<k_1 d,k_1
d\>=d^* d.
\end{equation*} Thus, $U$ is an isometry.

To prove the remainder of the Lemma, we note that, for any $d,
h\in\D$, we have
\[U^*P\pil(d) Uh =U^*P\pil(d)
\pil(h)k_1=U^*\pil(dh)k_1=dh.\qedhere \]
\end{proof}

\begin{lemma}\label{L: copy of D 2} For every $d\in\D$,
	\begin{equation*} V\pi(d) V^* = \pi_*(\pil(d))\pi_*(P).
	\end{equation*}
\end{lemma}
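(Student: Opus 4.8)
The plan is to deduce the identity from an intertwining property of the isometry $V$, together with the relation $VV^*=\pi_*(P)$ already recorded in Lemma~\ref{L: V}(b). Concretely, I would first prove that
\[
V\pi(d)=\pi_*(\pil(d))\,V\qquad\text{for every }d\in\D,
\]
and then multiply on the right by $V^*$.

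To verify the intertwining, I evaluate both sides on an arbitrary $\xi\in\H$. By the definition of $V$ we have $V\pi(d)\xi=k_1\otimes\pi(d)\xi$. On the other side,
\[
\pi_*(\pil(d))V\xi=\pi_*(\pil(d))(k_1\otimes\xi)=(\pil(d)k_1)\otimes\xi,
\]
using the formula $\pi_*(T)(u\otimes\xi)=(Tu)\otimes\xi$ for $T\in\L(\fA)$. The essential observation is that $1\in\E(\S)$, so $k_1\in\fB=\overline{\spn}\{k_e:e\in\E(\S)\}$; the discussion preceding Proposition~\ref{P:proj in A} shows that $\pil$ acts on $\fB$ as the right module action of $\D$, whence $\pil(d)k_1=k_1d$. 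Applying the balancing relation $ud\otimes\xi=u\otimes\pi(d)\xi$ that defines the interior tensor product, I get $(k_1d)\otimes\xi=k_1\otimes\pi(d)\xi=V\pi(d)\xi$, which is the desired intertwining. With this in hand the conclusion is immediate:
\[
V\pi(d)V^*=\pi_*(\pil(d))\,VV^*=\pi_*(\pil(d))\,\pi_*(P),
\]
the last equality being Lemma~\ref{L: V}(b).

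I do not expect a genuine obstacle, as the statement is a short consequence of material already assembled. The only point demanding care is the bookkeeping inside the interior tensor product: one must exploit that $\pil(d)$ restricts on $\fB$ to right multiplication by $d$, so that the scalar factor $d$ can be transported across the balanced tensor, rather than attempting to compute $\pil(d)k_s$ for a general $s$ and then applying $P$. Working through $k_1$ in this way sidesteps both the general formula $\pil(d)k_s=k_sj(s^\dag d s)$ and the partial action of $\S$ on $\D$. A reader preferring a direct computation could instead evaluate each operator on a general elementary tensor $k_s\otimes\xi$, invoking $V^*(k_s\otimes\xi)=\pi(j(s\wedge1))\xi$ from Lemma~\ref{L: V}(a), the identity $k_{s\wedge1}=k_1j(s\wedge1)$ from Corollary~\ref{C: Leech}, and the commutativity of $\D$ to interchange $\pi(d)$ and $\pi(j(s\wedge1))$; both routes yield the same equality.
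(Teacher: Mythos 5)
Your argument is correct, and it takes a genuinely different (if modest) route from the paper's. The paper proves the lemma by one direct computation on elementary tensors: for $s\in\S$ and $\xi\in\H$ it evaluates $V\pi(d)V^*(k_s\otimes\xi)$ using Lemma~\ref{L: V}(a) to obtain $k_1\otimes\pi(dj(s\wedge 1))\xi$, transports $d$ across the balanced tensor exactly as you do (via $\pil(d)k_1=k_1d$), and then recognizes $k_1\otimes\pi(j(s\wedge 1))\xi=k_{s\wedge 1}\otimes\xi=\pi_*(P)(k_s\otimes\xi)$ --- in effect re-deriving the action of $\pi_*(P)$ on elementary tensors rather than citing Lemma~\ref{L: V}(b). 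You instead factor the proof through the stronger covariance relation $V\pi(d)=\pi_*(\pil(d))V$, verified pointwise on $\H$ (so that only the single vector $k_1\in\fB$ ever appears), and then multiply on the right by $V^*$ and quote $VV^*=\pi_*(P)$ from Lemma~\ref{L: V}(b). Both proofs turn on the same essential step, the balancing move $(k_1d)\otimes\xi=k_1\otimes\pi(d)\xi$ legitimized by the fact that $\pil$ restricts on $\fB$ to the right module action; but your factorization is slightly cleaner in that it needs neither Lemma~\ref{L: V}(a) nor Corollary~\ref{C: Leech}, avoids general tensors $k_s\otimes\xi$ altogether, and isolates a reusable intertwining identity that also makes the multiplicativity of $\Phi$ in Proposition~\ref{P: copy of D} transparent. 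The alternative ``direct'' route you sketch at the end is, essentially verbatim, the paper's own proof.
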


\begin{proof} This is a simple calculation.  For $d\in\D$, $s\in \S$
and $\xi\in \H$,
\begin{align*} V\pi(d) V^*(k_s\otimes \xi)&= V\pi(dj(s\wedge 1))\xi=
k_1\otimes \pi(dj(s\wedge1))\xi\\ &=k_1 d \otimes\pi(j(s\wedge 1))\xi=
\pil(d)k_1 \otimes \pi(j(s\wedge 1))\xi\\ &= \pi_*(\pil(d))\left( k_1
\otimes \pi(j(s\wedge 1))\xi\right) = \pi_*(\pil(d))(k_{s\wedge 1}\otimes
\xi)\\ &= \pi_*(\pil(d))\pi_*(P) (k_s\otimes \xi). \qedhere
\end{align*}
\end{proof}

\begin{proposition}\label{P: copy of D} The image of $\D$ under
$\pi_*\circ\pil$ is a von Neumann algebra and the map
$\Phi:\pi(\D)\rightarrow (\pi_*\circ \pil)(\D)$ given by
$\Phi(\pi(d))= \pi_*(\pil(d))$ is an isomorphism of $\pi(\D)$ onto $\D_q$.
\end{proposition}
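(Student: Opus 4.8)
The plan is to recognize $\Psi:=\pi_*\circ\pil\colon\D\to\B(\fA\otimes_\pi\H)$ as a faithful \emph{normal} $*$-homomorphism and then read off both assertions. Faithfulness is free: $\pi$ is faithful by hypothesis, $\pil$ is a $*$-monomorphism by Proposition~\ref{P: universal1}, and $\pi_*$ is faithful because $\pi$ is, as noted after~\eqref{inducedpi}. Moreover $\pi(\D)$ is a von Neumann algebra, being the image of the von Neumann algebra $\D$ under the faithful normal representation $\pi$, and $\pi^{-1}\colon\pi(\D)\to\D$ is a normal $*$-isomorphism. Granting that $\Psi$ is normal, its image $\Psi(\D)$ is $\sigma$-weakly closed, hence a von Neumann algebra, and $\Phi=\Psi\circ\pi^{-1}$ is a normal $*$-isomorphism of $\pi(\D)$ onto $\Psi(\D)$; well-definedness of $\Phi$ needs only injectivity of $\pi$. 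Thus the proposition reduces to (i) the normality of $\Psi$ and (ii) the identification $\Psi(\D)=\D_q$.

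The step I expect to be the crux is (i). I would test normality on the total set of elementary tensors $k_s\otimes\xi$. From the identity $\pil(d)k_s=k_s\,j(s^\dag d s)$ recorded before Proposition~\ref{P:proj in A}, together with the balancing relation $ud\otimes\xi=u\otimes\pi(d)\xi$ defining $\fA\otimes_\pi\H$, one obtains $\Psi(d)(k_s\otimes\xi)=k_s\otimes\pi(j(s^\dag d s))\xi$. Using $\innerprod{k_s,k_t}=K(s,t)$ from Proposition~\ref{P: right D mod} and $(s^\dag d s)^{*}=s^\dag d^{*}s$, this yields
\[\innerprod{\Psi(d)(k_s\otimes\xi),\,k_t\otimes\eta}=\innerprod{\xi,\,\pi\bigl(j(s^\dag d^{*}s)\,K(s,t)\bigr)\eta}.\]
For fixed $s,t,\xi,\eta$ the right-hand side is (up to the conjugation forced by our inner-product convention) a normal functional of $d$: the partial action $d\mapsto s^\dag d s$ of Proposition~\ref{SactionD} is a normal $*$-homomorphism of $\D$, since it is induced by the \emph{complete} Boolean isomorphism $e\mapsto s^\dag e s$ of $\E(\S)$ of Lemma~\ref{L: idemiso} --- this is exactly where the local completeness built into a Cartan inverse monoid is used --- while multiplication by the fixed projection $K(s,t)$, the normal representation $\pi$, and the vector functional all preserve $\sigma$-weak continuity. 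Because $\|\Psi(d)\|\le\|d\|$, a routine $\varepsilon/3$ argument promotes $\sigma$-weak continuity of these matrix coefficients on a dense set to $\sigma$-weak-to-weak-operator continuity of $\Psi$ on bounded sets, and the standard characterization of normality (e.g.~\cite[Ch.~III]{TakesakiThOpAlI}) then gives that $\Psi$ is normal.

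For (ii), I would show that $\Psi(\D)$ is the von Neumann algebra generated by $\Psi$ of the projections of $\D$. Since $\D$ is abelian, the linear span of its projections is $\sigma$-weakly dense, so normality of $\Psi$ gives $\Psi(\D)=\bigl(\Psi(\textrm{Proj}(\D))\bigr)''$. As $\pil$ extends $\lambda|_{\E(\G)}$ and $\textrm{Proj}(\D)=\E(\G)$, we have $\Psi(\textrm{Proj}(\D))=\pi_*(\lambda(\E(\G)))=\lambda_\pi(\E(\G))$, whence $\Psi(\D)=(\lambda_\pi(\E(\G)))''=\D_q$. Consequently $\Phi$ is an isomorphism of $\pi(\D)$ onto $\D_q$, as required. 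The only non-formal input beyond the cited lemmas is the normality in (i); the remaining manipulations are the bookkeeping indicated above.
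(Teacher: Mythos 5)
Your argument is correct, but it takes a genuinely different route from the paper's. You prove head-on that $\Psi=\pi_*\circ\pil$ is a \emph{normal} $*$-homomorphism --- checking $\sigma$-weak continuity of the matrix coefficients $d\mapsto\innerprod{\Psi(d)(k_s\otimes\xi),\,k_t\otimes\eta}$ on the total set of elementary tensors and upgrading to bounded sets --- and then quote the general fact that the image of a von Neumann algebra under a normal $*$-homomorphism is $\sigma$-weakly closed. The paper never establishes normality of $\Psi$: it gets injectivity and isometry of $\Phi$ concretely via Lemmas~\ref{L: copy of D 1} and~\ref{L: copy of D 2}, which yield the compression identity $V^*\pi_*(\pil(d))V=\pi(d)$, and it shows $\pi_*(\pil(\D))$ is strongly closed by Kaplansky density: if $\pi_*(\pil(d_i))\to x$ strongly with $(d_i)$ bounded, then \eqref{Eq: pipil1} at $s=1$ makes $\pi(d_i)$ strongly Cauchy, so $d_i\to d$ $\sigma$-strongly in $\D$, and \eqref{Eq: pipil1} again, on the dense span and using boundedness, gives $x=\pi_*(\pil(d))$. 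Both proofs rest on the same two ingredients --- the formula $\pi_*(\pil(d))(k_s\otimes\xi)=k_s\otimes\pi(s^\dag d s)\xi$ and normality of the partial action $d\mapsto s^\dag d s$ --- and both treat the latter briskly; the clean justification is that this map is multiplication by the projection $j(ss^\dag)$ followed by a $*$-isomorphism of $\D j(ss^\dag)$ onto $\D j(s^\dag s)$, and $*$-isomorphisms of von Neumann algebras are automatically normal (so your appeal to Lemma~\ref{L: idemiso} and to local completeness is not quite the right citation: that lemma concerns $t\mapsto s^\dag t$ on $\{t\leq s\}$, and no completeness is actually needed here). As for what each approach buys: the paper's route is more elementary and reuses the operators $U$, $V$, $P$ that it needs anyway for the conditional expectation (Definition~\ref{D: existcondexp}), while yours is shorter at the top level and delivers normality of $\Phi$ explicitly as a by-product, a fact tacitly used later when the paper declares $E$ normal (though it also follows from the paper's version, since a $*$-isomorphism onto a von Neumann algebra is automatically normal). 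Your final identification $\Psi(\D)=(\lambda_\pi(\E(\G)))''=\D_q$ coincides with the paper's last step; note only that the expression $j(s^\dag d s)$ for $d\in\D$ is a typo you inherited from the paper --- the partial action of Proposition~\ref{SactionD} already lands in $\D$, so no $j$ belongs there.
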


\begin{proof} Clearly $\Phi$ is a $*$-homomorphism.  Lemma~\ref{L:
    copy of D 1} and Lemma~\ref{L: copy of D 2} show that $\Phi$ is an
  isomorphism of \cstaralg s.  To see that $\pi_*(\pil(\D))$ is a von
  Neumann algebra, it suffices to show that $\pi_*(\pil(\D))$ is
  strongly closed.

For $s\in\S$, the map $\D\ni d\mapsto s^\dag d s \in \D j(s^\dag s)$
is a $*$-homomorphism of the von Neumann algebra $\D$ onto the von
Neumann algebra $\D j(s^\dag s)$ and hence is normal.  Also for
$s\in\S$, $d\in\D$ and $\xi\in\H$,
\begin{equation}\label{Eq: pipil1} \pi_*(\pil(d))(k_s\otimes \xi)=k_s
\otimes \pi(s^\dag ds)\xi,
\end{equation} since
\begin{equation*} \pil(d)k_s=k_s\,\, (s^\dag d s).
\end{equation*}

Let $\N$ denote the strong closure of $\pi_*(\pil(\D))$ and fix
$x\in\N$.  Kaplansky's density theorem ensures that there exists a net
$d_i\in\D$ such that $\norm{d_i}\leq \norm{x}$ and $\pi_*(\pil(d_i))$
converges strongly to $x$.  Equation~\eqref{Eq: pipil1} applied with
$s=1$ implies that $\pi(d_i)$ is a strongly Cauchy net and hence
converges strongly.  Thus, $d_i$ converges $\sigma$-strongly to an element
$d\in\D$.  But another application of equation~\eqref{Eq: pipil1}
shows that $\pi_*(\pil(d_i)) u\rightarrow \pi_*(\pil(d)) u$ for every
$u\in\spn\{k_s\otimes \xi: s\in\S\text{ and } \xi\in\H\}$.  Since
$(d_i)$ is a bounded net, we obtain the strong convergence of
$\pi_*(\pil(d_i))$ to $\pi_*(\pil(d))$.  Hence $x\in \pi_*(\pil(\D))$
as desired.

Finally, for every $e\in\E(\S)$, a calculation gives 
$\pi_*(\pil(j(e)))=\lambda_\pi(j(e))$. Thus
$\pi_*(\pil(\D))=\lambda_\pi(\E(\G))''=\D_q$.
\end{proof}

We are at last ready to define the conditional expectation $E$ from
$\M_q$ onto $\D_q$.
Recall that for $v\in\G$, $V^*\lambda_\pi(v)V=\pi(\Delta(v))\in
\pi(\D)$.  Thus, Proposition~\ref{P: copy of D} shows that the
following definition of $E$ carries $\M_q$ into $\D_q$.

\begin{definition}\label{D: existcondexp} Define the conditional
expectation $E\colon \M_q\rightarrow \D_q$ by
	\begin{equation*} E(x)=\Phi(V^*xV).
	\end{equation*}
\end{definition}

By construction, $E$ is normal, idempotent and
$E|_{\D_q}=$id$_{\D_q}$.  Thus, $E$ is indeed a normal conditional
expectation.  We conclude this subsection by recording some facts
about $E$ that will be useful.

\begin{lemma}\label{L: E calc} For any $v\in\G$ and $x\in\M_q$ we have
\begin{equation*} E(\lambda_\pi(v))=\lambda_\pi(\Delta(v)),
\end{equation*} and
\begin{equation*} E(\lambda_\pi(v)^*x\lambda_\pi(v))=\lambda_\pi(v)^*
E(x)\lambda_\pi(v).
\end{equation*}
\end{lemma}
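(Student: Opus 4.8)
The plan is to handle the two assertions in turn, reducing each to an identity inside the inverse semigroup $\G$ by exploiting that $\pi_*$ is faithful (as $\pi$ is) and that $E$ is normal. For the first identity I would begin from $E(x)=\Phi(V^*xV)$ and Lemma~\ref{L: V}(c), which gives $V^*\lambda_\pi(v)V=\pi(\Delta(v))$; since $\Delta(v)\in\P\subseteq\D$, the definition of $\Phi$ in Proposition~\ref{P: copy of D} yields $E(\lambda_\pi(v))=\Phi(\pi(\Delta(v)))=\pi_*(\pil(\Delta(v)))$. Thus it suffices to upgrade the relation $\pi_*(\pil(j(e)))=\lambda_\pi(j(e))$, proved for idempotents $e\in\E(\S)$ at the close of Proposition~\ref{P: copy of D}, to the statement $\pil(p)=\lambda(p)$ for every $p\in\P$; applying the faithful map $\pi_*$ then gives $E(\lambda_\pi(v))=\lambda_\pi(\Delta(v))$.

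To compare $\pil(p)$ and $\lambda(p)$ I would evaluate both on $k_s$. On one side $\pil(p)k_s=k_s\,(s^\dag p s)$. On the other, writing $e_0=q(p)$ and using that $j$ is order preserving together with $j(e_0)=p^\dag p$, one finds $\sigma(p,s)=j(e_0 s)^\dag p\,j(s)=j(s)^\dag p\,j(s)$, so $\lambda(p)k_s=k_{q(p)s}\,\sigma(p,s)=k_s\,(j(s)^\dag p\,j(s))$. Hence the two agree exactly when $j(s)^\dag p\,j(s)=s^\dag p s$ for all $s\in\S$ and $p\in\P$, i.e.\ when the conjugation action of the section $j$ on $\P$ coincides with the partial action of Proposition~\ref{SactionD}. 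This is the main obstacle. On idempotents it is precisely $j(s^\dag e s)=s^\dag j(e)s$ from Proposition~\ref{SactionD}, and it is here that the Cartan (hence fundamental) hypothesis is essential: for fundamental $\S$ the assignment $s\mapsto\beta_s$ is injective and the $\S$-module structure carried by $\P$ is forced to be the partial action, whereas for non-fundamental $\S$ one can construct idempotent separating extensions whose conjugation differs from the partial action by a phase, for which $E(\lambda_\pi(v))=\lambda_\pi(\Delta(v))$ actually fails. I would record $j(s)^\dag p\,j(s)=s^\dag p s$ as a short lemma, deducing it from the idempotent case via the fact that both maps are multiplicative and adjoint preserving on $\P$ and that the projections $j(e)$ span a norm dense subalgebra of $\D$.

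For the second identity I would use that $E$ is normal and that the $*$-algebra $\spn\lambda_\pi(\G)$ is weak-$*$ dense in $\M_q=\lambda_\pi(\G)''$. Since $x\mapsto E(\lambda_\pi(v)^*x\lambda_\pi(v))$ and $x\mapsto\lambda_\pi(v)^*E(x)\lambda_\pi(v)$ are both normal in $x$, it is enough to check the identity for $x=\lambda_\pi(w)$, $w\in\G$. Using $\lambda_\pi(v)^*=\lambda_\pi(v^\dag)$, the homomorphism property of $\lambda_\pi$, and the first identity, the left side becomes $\lambda_\pi(\Delta(v^\dag w v))$ and the right side $\lambda_\pi(v^\dag\Delta(w)v)$; since $\lambda_\pi$ is injective, the claim reduces to the purely algebraic identity $\Delta(v^\dag w v)=v^\dag\Delta(w)v$ in $\G$.

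To prove this I would first establish the characterization $\Delta(u)=\max\{p\in\P:p\leq u\}$. Indeed $\Delta(u)=u\,j(q(u)\wedge 1)\leq u$ lies in $\P$, and if $p\in\P$ with $p\leq u$ then $q(p)\leq q(u)\wedge 1$, whence $p=u\,j(q(p))=\Delta(u)\,j(q(p))\leq\Delta(u)$; in particular $\Delta$ is order preserving. One inequality is then immediate: $\Delta(w)\leq w$ gives $v^\dag\Delta(w)v\leq v^\dag w v$, and as $v^\dag\Delta(w)v\in\P$ it lies below $\Delta(v^\dag w v)$. The reverse follows by symmetry: applying this inequality with $v$ replaced by $v^\dag$ and $w$ by $v^\dag w v$ gives $v\,\Delta(v^\dag w v)\,v^\dag\leq\Delta(v v^\dag w v v^\dag)$, and since $vv^\dag\leq 1$ forces $v v^\dag\,w\,v v^\dag\leq w$, monotonicity of $\Delta$ yields $v\,\Delta(v^\dag w v)\,v^\dag\leq\Delta(w)$; conjugating back and using that the source and range projections of $\Delta(v^\dag w v)$ lie under $v^\dag v$ gives $\Delta(v^\dag w v)\leq v^\dag\Delta(w)v$. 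Combining the two inequalities yields the identity, and the lemma follows.
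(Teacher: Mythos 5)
Your proof of the second identity is correct and takes a genuinely different route from the paper's. Both arguments reduce, via normality of $E$ and weak-$*$ density of $\spn\lambda_\pi(\G)$ in $\M_q$, to the purely algebraic identity $\Delta(v^\dag w v)=v^\dag\Delta(w)v$ in $\G$; the paper then verifies this by a direct computation with the section, expanding $\Delta(w^\dag vw)=w^\dag vw\,j\bigl(r^\dag q(v)r\wedge 1\bigr)$ with $r=q(w)$ and pushing the idempotent through $w$, whereas you derive it from the characterization $\Delta(u)=\max\{p\in\P:p\leq u\}$, monotonicity of $\Delta$, and a two-sided conjugation trick. Your version checks out in every detail: $p\leq u$ with $p\in\P$ indeed gives $p=u\,j(q(p))=\Delta(u)\,j(q(p))$, conjugates of elements of $\P$ lie in $\P$, $vv^\dag w\,vv^\dag\leq w$, and the source and range projections of $\Delta(v^\dag wv)$ lie under $v^\dag v$, so both inequalities and hence equality follow. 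This order-theoretic argument is arguably more transparent than the paper's computation, at the small cost of first establishing the max-characterization. Your preliminary computations for the first identity are also sound, including $\sigma(p,s)=j(s)^\dag p\,j(s)$ and the absorption $k_{q(p)s}\,\sigma(p,s)=k_s\,\sigma(p,s)$.

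The genuine gap is in your proof of the key lemma for the first identity. You correctly reduce $E(\lambda_\pi(v))=\lambda_\pi(\Delta(v))$, via $E(\lambda_\pi(v))=\Phi(\pi(\Delta(v)))=\pi_*(\pil(\Delta(v)))$, to the statement $\pil(p)=\lambda(p)$ for all $p\in\P$, equivalently $j(s)^\dag p\,j(s)=s^\dag p s$ for all $s\in\S$, $p\in\P$ --- a step the paper's one-line proof passes over silently, since Proposition~\ref{P: copy of D} only records the idempotent case. But your proposed deduction fails: the map $p\mapsto j(s)^\dag p\,j(s)$ is defined only on the semigroup $\P$, not on $\D$, so no linearity is available and norm-density of $\spn\{j(e):e\in\E(\S)\}$ in $\D$ buys nothing; a multiplicative, adjoint-preserving map on $\P$ is simply not determined by its values on projections. (Contrast $\pil$ itself, where the density argument is legitimate because $\pil$ and the action of Proposition~\ref{SactionD} are linear and continuous on $\D$.) Your claimed dichotomy is also wrong: fundamentality of $\S$ does not force the conjugation action. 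If $\Xi$ is any automorphism of the inverse semigroup $\P$ fixing $\E(\P)$ and commuting with $\pi$ --- for instance, apply a discontinuous character of $\bbT$ pointwise in one coordinate of $\D=\bbC^2$ --- then replacing the embedding $\iota$ by $\iota\circ\Xi$ in the extension associated to $(M_2(\bbC),\text{diagonal})$ yields an extension, in the sense of Definition~\ref{gext}, of the fundamental Cartan inverse monoid of all partial bijections of a two-point set, in which $j(s)^\dag p\,j(s)\neq s^\dag ps$ for suitable $p$; for that extension $E(\lambda_\pi(v))=\lambda_\pi(\Delta(v))$ fails outright, and $\D_q$ is not even a MASA in $\M_q$. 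So the identity $j(s)^\dag p\,j(s)=s^\dag ps$ is an additional property of the extension --- that conjugation in $\G$ induces the canonical $\S$-module structure on $\P$, as it automatically does for extensions arising from Cartan pairs, where $\operatorname{Ad}(v^*)$ is a normal $*$-isomorphism of $\D vv^*$ onto $\D v^*v$ and hence determined by its restriction to projections --- and it must be assumed or verified for the class of extensions at hand; it cannot be obtained from the idempotent case by the density argument you sketch.
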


\begin{proof} The first part follows from the definition of $E$.  For
the second, we will show for $v,w\in\G$ we have
	\[\Delta(w^\dag v w)=w^\dag\Delta(v) w.\] The result will then
follow from the normality of $E$.  Take $v,w\in\G$.  Setting
$r:=q(w)$, we have,
\begin{align*} \Delta(w^\dag v w) &= w^\dag v w \left(j(r^\dag q(v)
r\wedge 1)\right)\\ &= w^\dag v \left(wj(r^\dag q(v) r\wedge 1) w^\dag
\right)w\\ &= w^\dag v \left(j(rr^\dag q(v) rr^\dag\wedge rr^\dag)
\right)w\\ &= w^\dag v \left(j( (q(v) \wedge 1) rr^\dag) \right)w\\ &=
w^\dag v j(q(v)\wedge 1) w= w^\dag \Delta(v)w.\qedhere
\end{align*}
\end{proof}

\subsection{The Cartan pair}\label{Ss: Cpair} Our next goal is to show
that $(\M\sge,\D\sge)$ is a Cartan pair.  That $\D\sge$ is regular in
$\M\sge$ is straightforward.  Much less straightforward is showing
that $\D\sge$ is a MASA in $\M\sge$ and that $E$ is faithful.  The
normality of $E$ and a result of
Kov{\'a}cs-Sz{\aH{u}}cs~\cite[Proposition~1]{KovacsSzucsErTyThvNAl},
imply that if $\D\sge$ is a MASA in $\M_q$, then $E$ is faithful.  On
the other hand, as we shall see below, the fact that $\D\sge$ is a
MASA follows from faithfulness of $E$ and the fact that the set of
normalizing partial isometries span a weak-$*$ dense subset of
$\D\sge$.  Thus there is a \textsl{``Which comes first, $\D\sge$ is a MASA or
$E$ is faithful?''} problem.  It may be possible to give a direct proof
that $\D\sge$ is a MASA, but we will proceed by showing that $E$ is
faithful.

\begin{proposition}\label{P: faithfulexp} The conditional expectation
$E$ is faithful.
\end{proposition}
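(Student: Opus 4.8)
The plan is to reduce faithfulness to a cyclicity statement and then exploit the module property of $E$ recorded in Lemma~\ref{L: E calc}. Since $E(x)=\Phi(V^\ast xV)$ with $\Phi$ an isomorphism by Proposition~\ref{P: copy of D}, the map $E$ is faithful if and only if
\[ x\in\M_q,\quad x\geq 0,\quad V^\ast xV=0 \qquad\Longrightarrow\qquad x=0.\]
So I would fix $x\geq 0$ in $\M_q$ with $V^\ast xV=0$ and aim to show that $x^{1/2}$ annihilates a dense subspace of $\fA\otimes_\pi\H$.

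The first step is to spread the hypothesis $V^\ast xV=0$ across the whole space by means of the bimodule identity. For any $v\in\G$, Lemma~\ref{L: E calc} gives
\[ E(\lambda_\pi(v)^\ast x\lambda_\pi(v))=\lambda_\pi(v)^\ast E(x)\lambda_\pi(v)=0,\]
and, since $\Phi$ is injective, this forces $V^\ast\lambda_\pi(v)^\ast x\lambda_\pi(v)V=0$; that is, $(\lambda_\pi(v)V)^\ast x(\lambda_\pi(v)V)=0$. Because $x\geq0$, for each $\xi\in\H$ we get $\norm{x^{1/2}\lambda_\pi(v)V\xi}^2=\innerprod{x\lambda_\pi(v)V\xi,\lambda_\pi(v)V\xi}=0$, so
\[ x^{1/2}\lambda_\pi(v)V\xi=0\qquad\text{for all }v\in\G,\ \xi\in\H.\]

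Next I would compute these vectors explicitly, and here it suffices to take $v=j(s)$ for $s\in\S$. Since $q(j(s))=s$, $j(1)=1$, and $j(s)^\dag j(s)=j(s^\dag s)$, the cocycle-like term collapses to $\sigma(j(s),1)=j(s)^\dag j(s)=j(s^\dag s)$. Using the defining formula of Theorem~\ref{T: Grepdef} together with Corollary~\ref{C: Leech}, this gives
\[ \lambda_\pi(j(s))V\xi=\big(\lambda(j(s))k_1\big)\otimes\xi=\big(k_sj(s^\dag s)\big)\otimes\xi=k_s\otimes\xi.\]
Hence $x^{1/2}(k_s\otimes\xi)=0$ for every $s\in\S$ and $\xi\in\H$. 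As $\spn\{k_s:s\in\S\}$ is dense in $\fA$, the elementary tensors $k_s\otimes\xi$ span a dense subspace of $\fA\otimes_\pi\H$; therefore $x^{1/2}=0$, and so $x=0$, as required.

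The step I expect to be the crux is the first one. The expectation $E$ is defined by compressing to the single corner $V\H=\pi_*(P)(\fA\otimes_\pi\H)$, so a priori the vanishing $V^\ast xV=0$ constrains $x$ only on that one corner, which is far too small to force $x=0$ directly. The essential mechanism is that the normalizers $\lambda_\pi(v)$ move this corner around a cyclic family of subspaces, and the bimodule property $E(\lambda_\pi(v)^\ast x\lambda_\pi(v))=\lambda_\pi(v)^\ast E(x)\lambda_\pi(v)$—which itself rests on the relation $\Delta(w^\dag v w)=w^\dag\Delta(v)w$ and on normality—transports the vanishing to every translate $\lambda_\pi(v)V\H$. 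For this reason I would organize the argument around Lemma~\ref{L: E calc} and the cyclicity of $\{k_1\otimes\xi\}$, rather than attempting the more delicate route of first proving directly that $\D_q$ is a MASA in $\M_q$.
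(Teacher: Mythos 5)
Your proof is correct, and it takes a genuinely more direct route than the paper's. The paper argues in two stages: it first shows $E$ is faithful on the center $\C$ of $\M\sge$, using exactly the computation you use ($\sigma(j(s),1)=j(s^\dag s)$, hence $\lambda(j(s))k_1=k_s$ by Corollary~\ref{C: Leech}, and density of $\spn\{k_s\otimes\xi\}$ in $\fA\otimes_\pi\H$) --- but there centrality is needed, since the vanishing is propagated by commuting $x$ past $\lambda(j(s))$ in $x(k_s\otimes\xi)=\lambda(j(s))xV\xi$; it then treats general $x$ by forming the left ideal $\fJ=\{x\in\M\sge: E(x^*x)=0\}$, using Lemma~\ref{L: E calc} to show $\fJ$ is also a right ideal, invoking weak-$*$ closedness of $\fJ$ to write $\fJ=Q\M\sge$ for a central projection $Q$, and killing $Q$ by the central case. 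Your substitution of ``conjugate by $\lambda_\pi(v)$'' for ``commute past $\lambda(j(s))$'' --- extracting $x^{1/2}\lambda_\pi(v)V\xi=0$ from $E(\lambda_\pi(v)^*x\lambda_\pi(v))=\lambda_\pi(v)^*E(x)\lambda_\pi(v)=0$ via positivity --- removes the centrality reduction and the ideal/central-projection machinery entirely; in particular you never need the weak-$*$ closedness of $\fJ$, a point the paper asserts without detail. Both arguments rest on the same two pillars: the bimodule identity of Lemma~\ref{L: E calc}, which is indeed valid for all $x\in\M\sge$ (it is proved on $\lambda_\pi(\G)$ via $\Delta(w^\dag vw)=w^\dag\Delta(v)w$ and extended by normality, with no circular appeal to faithfulness), and the cyclicity of $V\H=k_1\otimes\H$ under $\lambda_\pi(j(\S))$; you simply combine them in one pass. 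The two small points your argument leans on are both in order: $V^*xV\in\pi(\D)$ for every $x\in\M\sge$ (so injectivity of $\Phi$ from Proposition~\ref{P: copy of D} converts $E(x)=0$ into $V^*xV=0$), and continuity of $u\mapsto u\otimes\xi$ (so density of $\fA_0$ in $\fA$ gives density of $\spn\{k_s\otimes\xi: s\in\S,\ \xi\in\H\}$). What the paper's longer route buys is mainly structural visibility --- it exhibits $\ker E\cap(\ker E)^*$-type data as an ideal cut down by a central projection, in the spirit of the Kov\'acs--Sz\H{u}cs argument it cites --- while yours buys brevity and fewer unproved closure assertions.
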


\begin{proof} Let $\C$ denote the center of $\M\sge$.  We claim that
$E|_\C$ is faithful.  Let $x\in\C$ and suppose $E(x^*x)=0$.  The
definition of $E$ from Definition~\ref{D: existcondexp} and
Proposition~\ref{P: copy of D} show that $xV=0$.  Notice that
$\sigma(j(s),1)=j(s^\dag s)$ so that $\lambda(j(s))k_1 =k_s j(s^\dag
s)=k_s$ (see Corollary~\ref{C: Leech}).  Hence for $s\in\S$ and
$\xi\in \H$,
\begin{equation*} x(k_s\otimes \xi)= x\lambda(j(s))(k_1\otimes \xi)
=\lambda(j(s)) x(k_1\otimes \xi) =\lambda(j(s))xV\xi=0.
\end{equation*} Since the span of such vectors is a dense subspace of
$\fH$, we conclude that $x=0$.

Let $\fJ:=\{x\in\M\sge: E(x^*x)=0\}$.  Then $\fJ$ is a left ideal of
$\M\sge.$ Lemma~\ref{L: E calc} implies that for $x\in\fJ$ and $v\in
\G$, $x\lambda_\pi(v)\in\fJ$.  It now follows that $\fJ$ is a
two-sided ideal of $\M\sge$ as well.  Since $\fJ$ is weak-$*$-closed,
by \cite[Proposition~II.3.12]{TakesakiThOpAlI}, there is a projection
$Q\in\C$ such that $\fJ=Q\M\sge.$ As $Q\in\fJ$ and $E|_\C$ is
faithful, we obtain $Q=0$. Thus $\fJ=(0)$, that is, $E$ is faithful.
\end{proof}

\begin{proposition}\label{P: dmasa} The subalgebra $\D\sge$ is a MASA
in $\M\sge$.
\end{proposition}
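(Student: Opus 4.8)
The plan is to prove $\D_q'\cap\M_q=\D_q$. Since $\D_q=(\lambda_\pi(\E(\G)))''$ is generated by the commuting projections $\lambda_\pi(e)$, $e\in\E(\G)$, it is abelian, so $\D_q\subseteq\D_q'\cap\M_q$ and only the reverse inclusion needs proof. Fix $x\in\D_q'\cap\M_q$ and set $x_0:=x-E(x)$; since $E(x)\in\D_q$, we have $x_0\in\D_q'\cap\M_q$ and $E(x_0)=0$, and it suffices to show $x_0=0$. First I would reduce this to a statement about ``Fourier coefficients.'' Because $\spn\lambda_\pi(\G)$ is a $*$-algebra that is $\sigma$-weakly dense in $\M_q$, Kaplansky's theorem yields a bounded net $y_\beta\in\spn\lambda_\pi(\G)$ with $y_\beta\to x_0^*$ $\sigma$-weakly; then $x_0y_\beta\to x_0x_0^*$ $\sigma$-weakly, and normality of $E$ together with its $\D_q$-bimodularity gives $E(x_0x_0^*)=\lim_\beta E(x_0y_\beta)$. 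Thus, once I show $E(x_0\lambda_\pi(v))=0$ for every $v\in\G$, I obtain $E(x_0x_0^*)=0$, and faithfulness of $E$ (Proposition~\ref{P: faithfulexp}) forces $x_0=0$, whence $x=E(x)\in\D_q$.

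The core is therefore to prove $E(x_0\lambda_\pi(v))=0$ for each $v$, and this is where the Cartan structure of $\S$ enters. Writing $g:=q(v)$, I would split the source idempotent $g^\dag g$ using the action $\beta_g$ of Proposition~\ref{SactionD}: let $e_*:=g\wedge 1$ be the fixed-point idempotent, and let $f_*:=\bigvee\{f\in\E(\S):f\le g^\dag g,\ gfg^\dag\wedge f=0\}$ be the join of the clopen pieces that $\beta_g$ moves off themselves, which exists because $\E(\S)$ is a complete Boolean algebra. The clopen set $G_{f_*}$ is the closure of the moved region $U_g=\bigcup\{G_f\}$, and $G_{e_*}$ is the fixed region. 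Here fundamentality is essential: since $s\mapsto\beta_s$ is \emph{injective} (Proposition~\ref{SactionD}), $\beta_g$ restricts to the identity on a clopen $G_e$ only when $ge=e$, i.e. $e\le e_*$; hence $G_{e_*}=\operatorname{int}\operatorname{Fix}(\beta_g)$, and consequently $e_*\vee f_*=g^\dag g$ with $e_*\perp f_*$. Setting $v_d:=vj(e_*)$ and $v_f:=vj(f_*)$, one has $v_d^\dag v_d+v_f^\dag v_f=v^\dag v$, so that $\lambda_\pi(v)=\lambda_\pi(v_d)+\lambda_\pi(v_f)$.

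It then remains to kill each summand. For the diagonal part, $v_d=vj(q(v)\wedge 1)=\Delta(v)$, so $\lambda_\pi(v_d)=E(\lambda_\pi(v))\in\D_q$ by Lemma~\ref{L: E calc}, and $E(x_0\lambda_\pi(v_d))=E(x_0)\lambda_\pi(v_d)=0$. For the free part, fix a clopen $f\le f_*$ with $gfg^\dag\wedge f=0$ and put $\rho:=gfg^\dag$, the range idempotent of $w:=vj(f)$, whose source idempotent is $f$. Using $\lambda_\pi(w)=\lambda_\pi(j(\rho))\lambda_\pi(w)=\lambda_\pi(w)\lambda_\pi(j(f))$, the hypothesis that $x_0$ commutes with $\lambda_\pi(j(\rho))\in\D_q$, and the bimodularity of $E$, I find that $E(x_0\lambda_\pi(w))$ is supported under both $\rho$ and $f$; as $\rho\wedge f=0$, it vanishes. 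Since $vj(f)=v_fj(f)$ for $f\le f_*$, this says $d:=E(x_0\lambda_\pi(v_f))$ vanishes on every such $G_f$, hence on the dense open set $U_g\subseteq G_{f_*}$; but $d\in\D\cong C(\widehat{\E(\S)})$ is supported on $G_{f_*}=\overline{U_g}$, so $d=0$ by continuity. Therefore $E(x_0\lambda_\pi(v))=0$ for all $v$, completing the argument.

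I expect the main obstacle to be exactly the free part, and specifically the interplay of two ingredients there. Completeness of $\E(\S)$ is what lets me assemble the free idempotent $f_*$ and reduce to finitely checkable ``moved-off'' pieces, while fundamentality (through the injectivity in Proposition~\ref{SactionD}) is what guarantees that $G_{f_*}$ carries no interior fixed points of $\beta_g$, so that the moved pieces are topologically dense in $G_{f_*}$ and continuity can finish the job; without fundamentality a nonzero normalizer commuting with $\D_q$ could survive and $\D_q$ would fail to be maximal. I also note that this route sidesteps the ``which comes first, MASA or faithfulness'' circularity flagged before the statement, since it relies only on the already-established faithfulness of $E$.
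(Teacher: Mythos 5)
Your proof is correct, but it takes a genuinely different route from the paper's. The paper works entirely inside $\M\sge$: it first shows, using fundamentality, that any element of $\lambda_\pi(\G)$ commuting with $\D\sge$ already lies in $\D\sge$ (Step 1); then, via the polar decomposition of $E(v^*x)$, that $vE(v^*x)\in\D\sge$ for every $x$ in the relative commutant and $v\in\lambda_\pi(\G)$ (Step 2); then that $v-E(v)\in\lambda_\pi(\G)$ (Step 3); and from these it extracts the identity $E(x^*x)=E(x^*)E(x)$ on $\D\sge^c$, which combined with faithfulness (Proposition~\ref{P: faithfulexp}) forces $x=E(x)$. You instead transport the classical freeness argument from crossed products to the inverse-monoid setting: you split $g^\dag g$ into the fixed part $e_*=g\wedge 1$ and the join $f_*$ of the idempotents moved off themselves by $\beta_g$, and you kill the coefficients $E(x_0\lambda_\pi(v))$ one normalizer at a time by a density/continuity argument on $G_{f_*}=\overline{U_g}$. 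Your dichotomy $e_*\vee f_*=g^\dag g$ with $e_*\wedge f_*=0$ is sound: every non-fixed character has a moved-off clopen neighborhood, completeness of $\E(\S)$ and the Stonean topology supply $f_*$ and the identity $G_{f_*}=\overline{U_g}$, and fundamentality identifies the interior of the fixed-point set with $G_{e_*}$ --- here note that the injectivity of $s\mapsto\beta_s$ you quote from Proposition~\ref{SactionD} is, for a Boolean inverse monoid, precisely equivalent to fundamentality, which is available since $\S$ is Cartan, so you have correctly located where that hypothesis is consumed. Both arguments end the same way (weak-$*$ density of $\spn\lambda_\pi(\G)$, normality of $E$, then faithfulness), so both sidestep the ``which comes first'' circularity. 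What each buys: the paper's proof is shorter and purely operator-algebraic --- the polar decomposition keeps everything inside $\M\sge$ and never touches $\widehat{\E(\S)}$ --- while yours makes the geometric mechanism (diagonal versus properly-moving behavior of $\beta_{q(v)}$ on the spectrum) explicit, at the cost of invoking the full Cartan order structure. One bookkeeping point: you use the $\D\sge$-bimodularity of $E$ repeatedly; this is not isolated as a lemma in the paper, but it follows from the intertwining relation $\Phi(\pi(d))V=V\pi(d)$ implicit in Lemma~\ref{L: copy of D 2} (or from Tomiyama's theorem), and the paper's own proof of this proposition uses it tacitly as well.
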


\begin{proof} The proof has several preliminary steps.  Let $\D\sge^c$
be the relative commutant of $\D\sge$ in $\M\sge$.

\textit{Step 1:} We first show $\lambda_\pi(\G)\cap \D\sge^c\subseteq
\D\sge$.  To see this, suppose $v\in\G$ and $\lambda_\pi(v)\in
\D\sge^c$.  In particular, $\lambda_\pi(v)$ commutes with every
element of $\lambda_\pi(\E(\G))$.  Since $\lambda_\pi$ is one-to-one,
$v$ commutes with every element of $\E(\G)$.  Since $\S$ is a
fundamental inverse monoid, it follows that $v\in\P$.  Therefore
$\lambda_\pi(v)\in\D\sge$.

\textit{Step 2:} Next, we claim that if $x\in \D\sge^c$, then for
every $v\in\lambda_\pi(\G)$, $vE(v^*x)\in\D\sge$.\footnote{In order
to be consistent with previous notation, we should start with $w\in\G$
and prove $\lambda_\pi(w)E(\lambda_\pi(w)^*x)\in \D\sge$.  But it is
notationally cleaner to write $v:=\lambda_\pi(w)$ instead.  We will
continue to do this when there is little danger of confusion.}  Given
such $x$ and $v$, we have, for each $d\in\D\sge$,
\begin{align*} xd-dx&=0, \quad\text{so}\\
v^*xd-v^*dvv^*x&=0.\quad\text{Apply $E$ to obtain}\\ E(v^*x)d-v^*dv
E(v^*x)&=0;\quad \text{multiplying on the left by $v$ yields}\\
vE(v^*x)d-dvE(v^*x)&=0.
\end{align*} Thus, $vE(v^*x)\in \D\sge^c$.  Let $E(v^*x)=u|E(v^*x)|$
be the polar decomposition of $E(v^*x)$.  Then $u$ is a partial
isometry in $\D\sge$, so $u\in\lambda_\pi(\P)$.  Also, $vu|E(v^*x)|$
is the polar decomposition of $vE(v^*x)$.  As $vE(v^*x)\in\D\sge^c$,
we conclude that $vu\in \lambda_\pi(\G)\cap\D\sge^c$, so by Step 1,
$vu\in\D\sge$.  But $|E(v^*x)|\in\D\sge$, so $vE(v^*x)\in\D\sge$.

\textit{Step 3:} For every $v\in\lambda_\pi(\G)$,
$v-E(v)\in\lambda_\pi(\G)$.  To see this, observe that since
$v^*E(v)\in\D\sge$, we have 
$v^*E(v)=E(v^*E(v))=E(v^*)E(v)$.  As 
$E(v)\in\lambda_\pi(\P)$, we have $I-E(v)^*E(v)\in\lambda_\pi(\P)$.
Hence $\lambda_\pi(\G)\ni v(I-E(v^*)E(v)) = v-E(v),$ as desired.

With these preliminaries completed, we now prove the proposition.  Let
$x\in\D\sge^c$.  If $w\in\lambda_\pi(\G)$ and $E(w)=0$, by Step 2, we
have
\[wE(w^*x)=E(wE(w^*x))=E(w)E(w^*x)=0.\] Multiplying on the left by
$w^*$ shows that $E(w^*x)=0$ whenever $w\in \lambda_\pi(\G)\cap \ker
E$.

By Step 3, we obtain for every $v\in\lambda_\pi(\G)$,
\[E(v^*x)=E((v^*-E(v^*))x)+E(E(v^*)x)=E(v^*)E(x).\] Since $\M\sge$ is
the weak-$*$-closed linear span of $\lambda_\pi(\G)$ and $E$ is
normal, we conclude that for every $x\in \D\sge^c$,
\begin{equation}\label{E: ndone1} E(x^*x)=E(x^*)E(x).
\end{equation} Replacing $x$ by $x-E(x)$ in \eqref{E: ndone1} shows
that for every $x\in\D\sge^c$,
\[E((x-E(x))^*(x-E(x)))=0.\] By faithfulness of $E$, $x=E(x)\in\D\sge$
for every $x\in\D\sge^c$.  This completes the proof.
\end{proof}

We are now ready to show that $(\M\sge,\D\sge)$ is a Cartan pair.

\begin{theorem}\label{T: Cpair} The pair $(\M\sge,\D\sge)$ is a Cartan
pair.
\end{theorem}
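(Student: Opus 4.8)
The plan is to verify the three conditions defining a Cartan pair for $(\M_q,\D_q)$: that $\D_q$ is a MASA in $\M_q$, that there is a faithful normal conditional expectation $E\colon\M_q\to\D_q$, and that the groupoid normalizers $\G\N(\M_q,\D_q)$ span a weak-$*$ dense subset of $\M_q$. Two of these are already in hand. The MASA condition is exactly Proposition~\ref{P: dmasa}. For the conditional expectation, the map $E$ of Definition~\ref{D: existcondexp} is normal and idempotent with $E|_{\D_q}=\mathrm{id}_{\D_q}$ by construction, it carries $\M_q$ into $\D_q$ by Proposition~\ref{P: copy of D}, and it is faithful by Proposition~\ref{P: faithfulexp}; hence it is a faithful normal conditional expectation. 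Thus the only remaining point is the normalizer condition.

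For the normalizer condition, I would first show that each $\lambda_\pi(v)$, $v\in\G$, lies in $\G\N(\M_q,\D_q)$. By Theorem~\ref{T: Grepdef} and Corollary~\ref{C: HilbertSpRep}, $\lambda_\pi(v)$ is a partial isometry in $\M_q$, and $\lambda_\pi(v)^*=\lambda_\pi(v^\dag)$. Recall that $\D_q=\lambda_\pi(\E(\G))''$, so $\D_q$ is generated as a von Neumann algebra by $\{\lambda_\pi(f):f\in\E(\G)\}$. For any idempotent $f\in\E(\G)$ the element $vfv^\dag$ is again idempotent (a standard fact in inverse semigroups, since idempotents commute: $(vfv^\dag)^2=vf(v^\dag v)fv^\dag=vv^\dag vfv^\dag=vfv^\dag$), so
\[
\lambda_\pi(v)\,\lambda_\pi(f)\,\lambda_\pi(v)^*=\lambda_\pi(v f v^\dag)\in\lambda_\pi(\E(\G))\subseteq\D_q.
\]
It remains to promote this from generators to all of $\D_q$. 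Here I would use that $\D_q$ is abelian: writing $p:=\lambda_\pi(v^\dag v)\in\D_q$, for any $d_1,d_2\in\D_q$ one has $d_1 p d_2=p\,d_1 d_2$, and since $\lambda_\pi(v)p=\lambda_\pi(v)$ this gives $(\lambda_\pi(v)d_1\lambda_\pi(v)^*)(\lambda_\pi(v)d_2\lambda_\pi(v)^*)=\lambda_\pi(v)\,d_1 d_2\,\lambda_\pi(v)^*$. Thus $d\mapsto\lambda_\pi(v)d\lambda_\pi(v)^*$ is a normal $*$-homomorphism on $\D_q$ carrying the generating set $\lambda_\pi(\E(\G))$ into $\D_q$, and therefore it carries all of $\D_q=\lambda_\pi(\E(\G))''$ into $\D_q$. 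The identical argument with $v^\dag$ in place of $v$ yields $\lambda_\pi(v)^*\D_q\lambda_\pi(v)\subseteq\D_q$. Hence $\lambda_\pi(v)$ is a groupoid normalizer.

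Finally, since $\M_q=\lambda_\pi(\G)''$ and $\lambda_\pi(\G)$ is closed under products and adjoints (being the image of the inverse semigroup $\G$ under the representation $\lambda_\pi$), its linear span is a $*$-subalgebra of $\M_q$ whose weak-$*$ closure is $\M_q$. As every element of $\lambda_\pi(\G)$ is a groupoid normalizer, $\G\N(\M_q,\D_q)$ spans a weak-$*$ dense subset of $\M_q$, and the three conditions are verified.

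I expect the only subtle point in this assembly to be the passage from generators to all of $\D_q$ in the normalizer argument; the abelianness of $\D_q$ is exactly what makes conjugation by the partial isometry $\lambda_\pi(v)$ multiplicative, hence a normal $*$-homomorphism, so that it respects the double commutant. The genuinely hard content---faithfulness of $E$ and the MASA property---has already been established in Propositions~\ref{P: faithfulexp} and~\ref{P: dmasa}, so the present theorem is essentially an assembly of those results together with this normalizer observation.
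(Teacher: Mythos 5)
Your proof is correct and takes essentially the same route as the paper's, which likewise assembles Proposition~\ref{P: dmasa}, the faithful normal expectation of Definition~\ref{D: existcondexp} together with Proposition~\ref{P: faithfulexp}, and the inclusion $\lambda_\pi(\G)\subseteq\G\N(\M\sge,\D\sge)$ with the weak-$*$ density of $\spn\lambda_\pi(\G)$. The only difference is that the paper treats that inclusion as immediate from the construction, whereas you supply a (correct) verification via the normal $*$-homomorphism $d\mapsto\lambda_\pi(v)\,d\,\lambda_\pi(v)^*$ on $\D\sge$, using abelianness of $\D\sge$ and weak-$*$ continuity to pass from the generators $\lambda_\pi(\E(\G))$ to all of $\D\sge=\lambda_\pi(\E(\G))''$.
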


\begin{proof} By Proposition~\ref{P: dmasa}, $\D\sge$ is a MASA in
$\M\sge$.  By Proposition~\ref{P: faithfulexp}, there is a faithful
conditional expectation from $\M\sge$ onto $\D\sge$.  Finally, as
\begin{equation*} \lambda_\pi(\G)\subseteq \G\N(\M\sge,\D\sge)
\end{equation*} and the span of $\lambda_\pi(\G)$ is weak-$*$ dense
in $\M\sge$ it follows that $\G\N(\M\sge,\D\sge)$ spans a weak-$*$
dense subset of $\M\sge$.
\end{proof}

We showed in Proposition~\ref{P: S Cartan} and Theorem~\ref{T: same
data} that a Cartan pair uniquely determines an extension by a Cartan
inverse monoid.  To complete our circle of ideas, we now want to show
that the extension for $(\M\sge,\D\sge)$ is equivalent to the
extension
\begin{equation*} \P \hookrightarrow \G \xrightarrow{q} \S.
\end{equation*} from which $(\M\sge,\D\sge)$ was constructed.

\begin{theorem}\label{T: indeppsi} The extension associated to the
Cartan pair $(\M\sge,\D\sge)$ is equivalent to the extension
\begin{equation*}\label{F: indeppsi1}
\P\hookrightarrow\G\xrightarrow{q} \S
\end{equation*} from which $(\M\sge,\D\sge)$ was constructed.

Moreover, the isomorphism class of $(\M\sge,\D\sge)$ depends only upon
the equivalence class of the extension (and not on the choice of
representation $\pi$ or section $j$).
\end{theorem}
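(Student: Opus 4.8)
The plan is to show that the representation map $\lambda_\pi$ is itself the isomorphism of middle terms required by Definition~\ref{Tequiv}, and then to read off the ``moreover'' clause from Theorem~\ref{T: same data}. Write $\G_q:=\G\N(\M\sge,\D\sge)$, $\P_q:=\G_q\cap\D\sge$, and let $q_q\colon\G_q\to\S_q:=\G_q/R_M$ be the quotient map, so that $\P_q\hookrightarrow\G_q\xrightarrow{q_q}\S_q$ is the extension attached to the Cartan pair $(\M\sge,\D\sge)$ by the construction of Section~\ref{S: ext of a pair}. By Theorem~\ref{T: Grepdef} and Corollary~\ref{C: HilbertSpRep}, $\lambda_\pi$ is an injective homomorphism of $\G$ into $\G_q$. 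I would first record the routine structural facts: Proposition~\ref{P: copy of D} identifies $\D\sge$ with $(\pi_*\circ\pil)(\D)$ as von Neumann algebras, and a direct computation gives $\lambda_\pi(p)=\pi_*(\pil(p))$ for every $p\in\P$. Since $\P$ is the full set of partial isometries of the abelian von Neumann algebra $\D$, this shows $\lambda_\pi(\P)=\P_q$, that $\lambda_\pi$ carries $\E(\G)$ bijectively onto the projection lattice of $\D\sge$, which is $\E(\G_q)$, and that $\lambda_\pi|_\P$ is exactly the map $\underline{\theta}$ induced on partial isometries by $\lambda_\pi|_{\E(\G)}$ (the idempotent-image argument already used in the proof of Theorem~\ref{T: same data}). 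Because $\lambda_\pi$ is injective and carries $\E(\G)$ onto $\E(\G_q)$, two elements $v_1,v_2$ lie in the same $R_M$-class in $\G$ if and only if $\lambda_\pi(v_1),\lambda_\pi(v_2)$ do in $\G_q$; hence $q_q\circ\lambda_\pi$ descends to an injective homomorphism $\theta\colon\S\to\S_q$ with $q_q\circ\lambda_\pi=\theta\circ q$, and $\theta|_{\E(\S)}$ is a complete Boolean-algebra isomorphism onto $\E(\S_q)$.

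The heart of the argument is to prove $\theta$ is onto; granting this, surjectivity of $\lambda_\pi$ follows formally: given $w\in\G_q$, choose $v\in\G$ with $q_q(w)=\theta(q(v))=q_q(\lambda_\pi(v))$, so $(w,\lambda_\pi(v))\in R_M$ and $\lambda_\pi(v)^*w\in\P_q=\lambda_\pi(\P)$; writing $\lambda_\pi(v)^*w=\lambda_\pi(p)$ and using that $w$ and $\lambda_\pi(v)$ share their range projection, one gets $w=\lambda_\pi(v)\lambda_\pi(p)=\lambda_\pi(vp)\in\lambda_\pi(\G)$. To prove $\theta$ surjective I would fix a nonzero $\sigma\in\S_q$, represent it as $\sigma=q_q(w)$ for a groupoid normalizer $w$, and exhaust the source of $\sigma$ by clopen pieces on which $\sigma$ lands in $\theta(\S)$. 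For the local step, note that $\spn\lambda_\pi(\G)$ is weak-$*$ dense in $\M\sge$ and $E$ is faithful (Proposition~\ref{P: faithfulexp}), so for any nonzero normalizer $w$ there is some $v\in\G$ with $E(\lambda_\pi(v)^*w)\neq0$; the standard structure of groupoid normalizers of a Cartan MASA then shows that on the nonzero support projection $e\le w^*w$ of this coefficient, $w$ and $\lambda_\pi(v)$ coincide, so that $q_q(w)$ and $\theta(q(v))=q_q(\lambda_\pi(v))$ agree after restriction to $e$; since $\S$ is closed under restriction to clopen subdomains, $\sigma|_e\in\theta(\S)$. A Zorn's lemma argument produces a maximal pairwise orthogonal family $\{e_\alpha\}$ with $\sigma|_{e_\alpha}\in\theta(\S)$; faithfulness of $E$ forces $\bigvee_\alpha e_\alpha$ to be the full source of $\sigma$, since otherwise cutting $w$ by the complementary projection yields a nonzero normalizer to which the local step applies again. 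Finally, writing $\sigma|_{e_\alpha}=\theta(s_\alpha)$ with $\{s_\alpha\}$ pairwise orthogonal in $\S$, and using that $\S$ is a complete Boolean inverse monoid while $\theta$ preserves orthogonal joins and restricts to a complete lattice isomorphism on idempotents, I obtain $\sigma=\bigvee_\alpha\sigma|_{e_\alpha}=\theta\left(\bigvee_\alpha s_\alpha\right)\in\theta(\S)$. The main obstacle is precisely this local agreement step---that two groupoid normalizers coincide on the support of the coefficient $E(\lambda_\pi(v)^*w)$---which rests on the fine structure of normalizers of a Cartan MASA together with faithfulness of $E$; the reassembly of the pieces then crucially uses completeness of $\S$.

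This proves $\lambda_\pi\colon\G\to\G_q$ is an isomorphism of inverse semigroups. Taking $\alpha=\lambda_\pi$ and the induced $\theta$, the identities $q_q\circ\alpha=\theta\circ q$ and $\alpha\circ\iota=\iota_q\circ\underline{\theta}$ (the latter being $\lambda_\pi|_\P=\underline{\theta}$ from the first paragraph) are exactly the conditions of Definition~\ref{Tequiv}, so the extension of $(\M\sge,\D\sge)$ is equivalent to $\P\hookrightarrow\G\xrightarrow{q}\S$. The ``moreover'' clause is then immediate: if $(\pi_1,j_1)$ and $(\pi_2,j_2)$ are two choices of faithful normal representation and order-preserving section, the first part of the theorem shows that each of the two resulting Cartan pairs has associated extension equivalent to the single fixed extension $\P\hookrightarrow\G\xrightarrow{q}\S$; as equivalence of extensions is transitive, the two associated extensions are equivalent to each other, and Theorem~\ref{T: same data} then furnishes an isomorphism of the two Cartan pairs. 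Hence the isomorphism class of $(\M\sge,\D\sge)$ depends only on the equivalence class of the extension, and not on the choice of $\pi$ or $j$.
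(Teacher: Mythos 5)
Your proposal is correct and follows essentially the same strategy as the paper's proof: the same local step (a nonzero coefficient $E(\lambda_\pi(v)^*w)\neq 0$ obtained from weak-$*$ density of $\spn\lambda_\pi(\G)$ and faithfulness of $E$, combined with the normalizer structure from \cite{CameronPittsZarikianBiCaMASAvNAlNoAlMeTh}), the same exhaustion by a maximal pairwise orthogonal family of projections, the same reassembly via completeness, and the same derivation of the ``moreover'' clause from Theorem~\ref{T: same data} plus transitivity of equivalence. The only differences are organizational and minor: you prove surjectivity of the induced map $\theta$ on the fundamental quotients and then formally descend to surjectivity of $\lambda_\pi$, whereas the paper proves $\lambda_\pi(\G)=\G\N(\M\sge,\D\sge)$ directly by building the preimage $w=j(s)h\in\G$ with $h=\sum_{p}h_p$ converging weak-$*$ in $\D$ (your route neatly avoids this explicit construction by reassembling in the complete Boolean inverse monoid $\S\sge$); and your phrase that $w$ and $\lambda_\pi(v)$ ``coincide'' on the support projection is slightly loose --- they agree only up to a partial isometry in $\D\sge$ (this is what the paper's Claim 1 together with \cite[Lemma~2.3.1(a)]{CameronPittsZarikianBiCaMASAvNAlNoAlMeTh} actually yields), but that is exactly enough for the images in $\S\sge$ to agree on that projection, which is all your argument uses.
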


\begin{proof} 
Let $R_M$ and $R_{M,\pi}$ be the Munn congruences for $\G$ and
$\lambda_\pi(\G)$ respectively.  Because $\lambda_\pi$ is an
isomorphism of $\G$ onto $\lambda_\pi(\G)$, $(v,w)$ belongs to 
$R_M$ if and only
if $(\lambda_\pi(v),\lambda_\pi(w))$ belongs to $R_{M,\pi}$.  Let
$q_\pi:\lambda_\pi(\G)\rightarrow \lambda_\pi(\G)/R_{M,\pi}$ be the
quotient map.  Then the map $\tilde{\lambda}_\pi:= q_\pi\circ
\lambda_\pi\circ j$ is an isomorphism of $S$ onto
$\lambda_\pi(\G)/R_{M,\pi}$ such that $\tilde{\lambda}_\pi\circ
q=q_\pi\circ \lambda_\pi$.  It is now clear that the extensions
\[\P\hookrightarrow \G\xrightarrow{q}\S\] and
\[\lambda_\pi(\P) \hookrightarrow \lambda_\pi(\G) \xrightarrow{q_\pi}
\widetilde{\lambda}_\pi(\S)\] are equivalent.

Our next task is to show that 
\begin{equation}\label{indeppsi2} \lambda_\pi(\G) = \G\N(\M_q,\D_q).
\end{equation}    It will then follow immediately that  $\lambda_\pi(\P) \hookrightarrow
\lambda_\pi(\G) \xrightarrow{q_\pi} \widetilde{\lambda}_\pi(\S)$ is
the extension associated to $(\M_q,\D_q)$.  

\textit{Claim 1: If $u\in\G\N(\M_q,\D_q)$, then $uE(u^*)$ is a
projection in $\D_q$, and
\begin{equation}\label{E: saE} uE(u^*)=E(uE(u^*))=E(u)E(u^*).
\end{equation}} To see this, suppose $d\in\D_q$.  Then
\[uE(u^*)d=uE(u^*d)=uE(u^*du u^*)=uu^*duE(u^*)=duE(u^*).\] Since
$\D_q$ is a MASA in $\M_q$,  $uE(u^*)\in\D_q.$ Next,
\[uE(u^*)uE(u^*)=uE(u^* uE(u^*))=uu^*u E((E(u^*))=uE(u^*),\] so
$uE(u^*)$ is a projection in $\D_q$.  The equality \eqref{E: saE} is
now obvious.

By construction, $\lambda_\pi(\G)\subseteq \G\N(\M_q,\D_q)$.  To
establish the reverse inclusion, fix $v\in\G\N(\M_q,\D_q)$; without
loss of generality, assume $v\neq 0$.

\textit{Claim 2: There exists $p\in\lambda_\pi(\E(\G))$ such that: a)
  $vp\in \lambda_\pi(\G)$, b) $p\leq v^*v$, and c) $vp\neq 0$.}  Since
$\lambda_\pi(\G)''=\M_q$, it follows (as in the proof of
\cite[Proposition~1.3.4]{CameronPittsZarikianBiCaMASAvNAlNoAlMeTh})
that there exists $w\in \lambda_\pi(\G)$ such that $wE(w^*v)\neq 0$.
Let $p=v^*wE(w^*v).$ By Claim 1, $p\in\D_q$ is a projection.  It is
evident that $p\leq v^*v$.  Moreover, \eqref{E: saE} implies that $
E(v^*w)E(w^*v) = |wE(w^*v))|^2=p,$ so $E(w^*v)$ is a partial isometry in
$\D_q$, so $wE(w^*v)\in\lambda_\pi(\G)$.  Since
$E(w^*v)=w^*v(v^*wE(w^*v)),$ we obtain,
\[ 0\neq wE(w^*v)=w (w^*v(v^*wE(w^*v))) = vv^*wE(w^*v)=vp.\] Thus
Claim 2 holds.

Now let $\F\subseteq \{p\in \D_q: \text{$p$ is a projection and }
p\leq v^*v\}$ be a maximal pairwise orthogonal family of projections
such that for each $p\in\F$, $0\neq vp\in\lambda_\pi(\G)$.  Set
$Q:=\bigvee\F$.  The maximality of $\F$ implies that $Q=v^*v$.
Indeed, if $Q\neq v^*v$, then $Q_1:=v^*v-Q$ is a projection in $\D_q$
and applying Claim 2 to $vQ_1$ yields a projection $v^*v\geq p\in\D_q$
such that $0\neq vp\in\lambda_\pi(\G)$ which is orthogonal to every
element of $\F$.

For each $p\in\F$, set
\[w_p:= \lambda_\pi^{-1}(vp),\quad s_p=q(w_p), \quad h_p=j(s_p)^\dag
w_p \dstext{and} e_p=s_p^\dag s_p.\] Then
\[h_p\in\P,\quad vp = \lambda_\pi(w_p)\dstext{and}
p=\lambda_\pi(j(e_p)).\] Also, $\{s_p: p\in \F\}$ is a pairwise
orthogonal family in $\S$ and hence the sum $\sum_{p\in\F} h_p$
converges weak-$*$ in $\D$.  Let \[s=\bigvee_{p\in \F} s_p,\quad
e:=\bigvee_{p\in \F} e_p, \dstext{and} h=\sum_{p\in \F} h_p.\] Thus,
$h\in\P$ and $h^\dag h=j(s^\dag s)$.  Now set \[w:=j(s) h\in \G.\] We
claim that $\lambda_\pi(w)=v$.  Observe that $v^*v=\lambda_\pi(w^*w)$.
Also, for $p\in \F$, $se_p=s_p$, so
\[\lambda_\pi(w)
p=\lambda_\pi(wj(e_p))=\lambda_\pi(j(s)h_p)=\lambda_\pi(j(s_p)
h_p)=\lambda_\pi(j(s_p)j(s_p)^\dag w_p) =vp.\] Therefore,
\[\lambda_\pi(w)=\lambda_\pi(w) Q = vQ=v.\] Hence
$v\in\lambda_\pi(\G)$.  Therefore
\begin{equation*}
\lambda_\pi(\P) \hookrightarrow
\lambda_\pi(\G) \xrightarrow{q_\pi} \widetilde{\lambda}_\pi(\S)
\end{equation*} is the extension for $(\M_q,\D_q).$

Suppose that $\pi'$ is a faithful normal representation of $\D$ and
that and $j':\S\rightarrow \G$ is an order preserving section for $q$.
Let $(\M'_q,\D'_q)$ be the Cartan pair constructed using $\pi'$ and
$j'$ as in Theorem~\ref{T: Cpair}.  Then the previous paragraphs show
that the extensions associated to $(\M_q,\D_q)$ and $(\M'_q,\D'_q)$
are equivalent extensions.  By Theorem~\ref{T: indeppsi},
$(\M_q,\D_q)$ and $(\M'_q,\D'_q)$ are isomorphic Cartan pairs.  The
proof is now complete.
\end{proof}

\section{The Spectral Theorem for Bimodules and Subdiagonal 
Algebras}
In this section, we provide two illustrations of how our viewpoint may
be used to reformulate and address the validity of a pair of important
assertions found of Muhly, Saito and Solel
\cite{MuhlySaitoSolelCoTrOpAl}.

Muhly, Saito and Solel studied the weak-$*$-closed $\D$-bimodules in a
Cartan pair $(\M,\D)$ as they relate to the underlying equivalence
relation $R$ from the Feldman-Moore construction.  Roughly speaking,
they claimed in \cite[Theorem~2.5]{MuhlySaitoSolelCoTrOpAl} that if
$B\subseteq \M$ is a weak-$*$-closed $\D$-bimodule in $\M$, then there
is a Borel subset $A\subseteq R$ such that $B$ consists of all
operators in $\M$ whose ``matrices" are supported in $B$.  This
statement is commonly known as the Spectral Theorem for Bimodules.  It
has been known for some time that there is a gap in the proof of
\cite[Theorem~2.5]{MuhlySaitoSolelCoTrOpAl}, see
e.g. \cite{AoiCoEqSuInSu}.  When the equivalence relation $R$
 is hyperfinite, the result was
shown to hold by Fulman \cite[Theorem~15.18]{FulmanCrPrvNAlEqReThSu}.
When $\M$ is a hyperfinite factor, $R$ is hyperfinite, .  

An alternate approach to the Spectral Theorem for Bimodules was given
by Cameron, Pitts, and Zarikian in
\cite{CameronPittsZarikianBiCaMASAvNAlNoAlMeTh}.  Rather than
characterizing weak-$*$-closed $\D$-bimodules, Cameron, Pitts and
Zarikian show that the lattice of Bures-closed $\D$-bimodules is
isomorphic to the lattice of projections in a certain abelian von
Neumann algebra $\Z$ associated to the pair $(\M,\D)$, see
\cite[Theorem~2.5.8]{CameronPittsZarikianBiCaMASAvNAlNoAlMeTh}.
Moreover, the work in \cite{CameronPittsZarikianBiCaMASAvNAlNoAlMeTh}
shows that the Spectral Theorem for Bimodules holds if and only if
every weak-$*$-closed $\D$-bimodule in $\M$ is closed in the Bures
topology.  The approach in
\cite{CameronPittsZarikianBiCaMASAvNAlNoAlMeTh} does not rely on the
Feldman-Moore construction.

Our first goal, accomplished in the first subsection, is
to give a description of the Bures-closed $\D$-bimodules in a Cartan
pair $(\M,\D)$ in terms of certain subsets of $\S$, see
Theorem~\ref{T: Spectral Theorem}.  This description of the bimodules
in $\M$ is a direct analogue of the spectral assertion for bimodules
of Muhly, Saito and Solel.  The advantage of the description given in
Theorem~\ref{T: Spectral Theorem} over that in
\cite{CameronPittsZarikianBiCaMASAvNAlNoAlMeTh} is that Bures-closed
bimodules of $\M$ are parametrized in terms of data directly obtained
from the associated extension, so there is no need to consider the
projection lattice of $\Z$.  In Corollary \ref{C: Aoi app}, we use
Aoi's Theorem to refine this result to parametrize the von Neumann
algebras between $\M$ and $\D$.  In the second subsection, we use our
work to give a description of the maximal subdiagonal algebras of $\M$
which contain $\D$, see Theorem~\ref{MSS3.5} below.
Theorem~\ref{MSS3.5} provides a proof of Muhly, Saito, and Solel's
main representation
theorem, 
\cite[Theorem~3.5]{MuhlySaitoSolelCoTrOpAl}, which avoids
the (as yet)  unproven weak-$*$ version of the Spectral Theorem for Bimodules.

\subsection{$\D$-Bimodules and Spectral Sets}

\begin{definition}[\cite{BuresAbSuvNAl}] The \emph{Bures topology} on
$\M$ is the locally convex topology generated by the family of
seminorms
\begin{equation*} \{ T\mapsto\sqrt{\tau(E(T^*T))}:\ \tau\in(\D_*)^+\}.
\end{equation*}
\end{definition}

We define the following subsets of $\S$.
\begin{definition} A subset $A$ of a Cartan inverse monoid $\S$ is a
\emph{spectral set} if
\begin{enumerate}
\item $s\in A$ and $t\leq s$ implies $t\in A$; and
\item $\{s_i\}_{i\in I}$ is a pairwise orthogonal family in $A$, then
$\bigvee_{i\in I}s_i\in A$.
\end{enumerate}
\end{definition} Given two spectral sets $A_1, A_2\subseteq \S$,
define their \textit{join span}, denoted $A_1\js A_2$, to be the set
of all elements of $\S$ which can be written as the join of two
orthogonal elements, one from $A_1$ and the other from $A_2$, that is,
\[ A_1\js A_2:=\{s\in \S: \text{ there exists } s_i\in A_i \text{ such
that } s_1s_2^\dag=s_1^\dag s_2=0 \text{ and } s=s_1\vee s_2\}.
\] It is not hard to see that $A_1\js A_2$ is the smallest spectral
set containing $A_1\cup A_2$.
Thus the spectral sets in $\S$ form a lattice, with join given by
$\js$ and meet given by intersection $\cap$.  We aim to show the
existence of a lattice isomorphism between the spectral sets in $\S$
and the Bures-closed $\D$-bimodules in $\M$.  

For any weak-$*$-closed bimodule $B\subseteq \M$, let
\[\G\N(B,\D):=B\cap \G\N(\M,\D).\] It is shown in
\cite[Proposition~2.5.3]{CameronPittsZarikianBiCaMASAvNAlNoAlMeTh} that
\[\overline{\spn}^{\text{w-}*}(\G\N(B,\D))\subseteq B \subseteq
\overline{\spn}^{\text{Bures}}(\G\N(B,\D)).\] Also, if $B$ is a
Bures-closed $\D$-bimodule, then
$B=\overline{\spn}^{\text{Bures}} 
(\G\N(B,\D))$~\cite[Theorem~2.5.1]{CameronPittsZarikianBiCaMASAvNAlNoAlMeTh}.

For a Bures-closed $\D$-bimodule $B\subseteq \M$, define
$\Theta(B)\subseteq \S$ by
\begin{equation*} \Theta(B)=q(\G\N(B,\D)).
\end{equation*} Further, define a map $\Psi$ from the collection of
spectral sets in $\S$ to Bures-closed $\D$-bimodules in $\M$ by
\[\Psi(A)=\overline{\spn}^{\text{Bures}}(j(A)),\] which is necessarily
a Bures-closed $\D$-bimodule.  

The
following is a restatement of
\cite[Theorem~2.5.8]{CameronPittsZarikianBiCaMASAvNAlNoAlMeTh} in
terms of spectral sets which is in the same spirit as the original
assertion of \cite[Theorem~2.5]{MuhlySaitoSolelCoTrOpAl}.
\begin{theorem}[Spectral Theorem for Bimodules]\label{T: Spectral
Theorem} There is a lattice isomorphism of the lattice of Bures-closed
$\D$-bimodules onto the lattice of spectral sets in $\S$.
\end{theorem}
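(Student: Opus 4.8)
The plan is to show that the two maps $\Theta$ and $\Psi$ are mutually inverse and order-preserving in both directions; since the spectral sets form a lattice (under $\js$ and $\cap$) and the Bures-closed $\D$-bimodules form a lattice (meet $=$ intersection, join $=$ Bures-closure of the algebraic sum), an order isomorphism between them is automatically a lattice isomorphism, so no separate verification that $\js$ and $\cap$ are preserved is needed. Two preliminary points must be dealt with first: that $\Psi(A)$ is a Bures-closed bimodule (already noted) and that $\Theta(B)$ is genuinely a spectral set. For the latter, downward closure follows because if $s=q(v)$ with $v\in\G\N(B,\D)$ and $t=se\le s$, then $vj(e)\in B\cap\G$ (right $\D$-module) has $q(vj(e))=t$; closure under orthogonal joins follows by lifting a pairwise orthogonal family $\{s_i\}\subseteq\Theta(B)$ to $v_i\in\G\N(B,\D)$, noting $q(v_i^*v_j)=s_i^\dag s_j=0$ forces $v_i^*v_j=v_iv_j^*=0$ (via Lemma~\ref{L: RinP}), so the strong sum $w=\sum_i v_i\in\G$ of Proposition~\ref{P: S Cartan} has $q(w)=\bigvee_i s_i$ and lies in $B$ because the partial sums converge to $w$ in the Bures topology (here $E\big((\sum_{i\notin F}v_i)^*(\sum_{i\notin F}v_i)\big)=\sum_{i\notin F}v_i^*v_i\to 0$ strongly). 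Order-preservation of both maps under inclusion is immediate.

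For $\Psi\circ\Theta=\text{id}$ I would use the factorization valid for any $v\in\G$: setting $h_v:=j(q(v))^*v\in\P$ one checks $v=j(q(v))h_v$ and $j(q(v))=vh_v^*$, with $h_v,h_v^*\in\D$. Thus for Bures-closed $B$ each $v\in\G\N(B,\D)$ lies in the bimodule $\Psi(\Theta(B))\supseteq j(\Theta(B))$, and each $j(q(v))=vh_v^*\in B$; taking Bures-closed spans and invoking $B=\overline{\spn}^{\text{Bures}}(\G\N(B,\D))$ from \cite{CameronPittsZarikianBiCaMASAvNAlNoAlMeTh} gives $\Psi(\Theta(B))=B$.

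The substantive direction is $\Theta\circ\Psi=\text{id}$. The inclusion $A\subseteq\Theta(\Psi(A))$ is trivial, since $j(s)\in\G\N(\Psi(A),\D)$ and $q(j(s))=s$. For the reverse inclusion I would argue contrapositively. Using that $\E(\S)$ is a complete Boolean algebra, the set $\{e\le s^\dag s:se\in A\}$ has a largest element $e_0$, so $s_0:=se_0$ is the largest element of $A$ below $s$; if $s\notin A$ then $f_0:=s^\dag s\wedge\neg e_0\neq 0$, so $r:=sf_0\neq 0$, and the identity $se_1\wedge se_2=s(e_1\wedge e_2)$ forces $r\wedge a=0$ for every $a\in A$. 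Since $E$ carries a normalizer $w$ to $w\,j(q(w)\wedge 1)$, Lemma~\ref{L: Leech} shows $E(j(r)^*j(a))=0$ precisely when $r\wedge a=0$, so $j(r)$ is Bures-orthogonal to $\spn(j(A))$; expanding $\tau\big(E((j(r)-x)^*(j(r)-x))\big)\ge\tau\big(E(j(r)^*j(r))\big)=\tau(j(r^\dag r))$ for $x\in\spn(j(A))$ and using faithfulness of $E$ (Proposition~\ref{P: faithfulexp}) shows $j(r)\notin\overline{\spn}^{\text{Bures}}(j(A))=\Psi(A)$. But if $s\in\Theta(\Psi(A))$, the factorization above gives $j(s)\in\Psi(A)$, whence $j(r)=j(s)j(f_0)\in\Psi(A)$ because $\Psi(A)$ is a bimodule and $j(f_0)\in\D$ --- a contradiction. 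Hence $s\in A$, giving $\Theta(\Psi(A))\subseteq A$.

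The main obstacle is exactly this last direction: isolating the correct residual normalizer $r$ (which is where completeness of $\E(\S)$ is used, to form $e_0$) and establishing the orthogonality $E(j(r)^*j(a))=0$ for all $a\in A$. Once the fixed-point-idempotent description of $E$ on normalizers together with Lemma~\ref{L: Leech} reduces this to the order-theoretic statement $r\wedge a=0$, faithfulness of $E$ upgrades Bures-orthogonality to genuine separation; combining the two inverse relations with order-preservation then delivers the asserted lattice isomorphism of Theorem~\ref{T: Spectral Theorem}.
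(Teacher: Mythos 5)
Your proposal is correct and takes essentially the same route as the paper's proof: both verify that $\Theta(B)$ is a spectral set by lifting orthogonal families to normalizers and summing, show $\Theta$ and $\Psi$ are mutually inverse inclusion-preserving maps, and settle the hard inclusion $\Theta(\Psi(A))\subseteq A$ by extracting a residual element ($r$ in your notation, $t$ in the paper's) with $r\wedge a=0$ for all $a\in A$ and showing $j(r)$ cannot lie in $\overline{\spn}^{\text{Bures}}(j(A))$. The only differences are matters of self-containment rather than substance: you prove the separation step directly, via $E(w)=w\,j(q(w)\wedge 1)$, Lemma~\ref{L: Leech}, and a Pythagorean expansion of the Bures seminorms, where the paper cites Corollary~2.3.2 and Lemma~1.4.6 of \cite{CameronPittsZarikianBiCaMASAvNAlNoAlMeTh}, and you make explicit the construction of $r$ that the paper merely asserts --- just note that $se_0\in A$ does not follow from completeness of $\E(\S)$ alone but needs the spectral-set axioms applied to a maximal pairwise orthogonal refinement of $\{e\le s^\dag s : se\in A\}$, and that faithfulness of $E$ here is part of the definition of a Cartan pair (Proposition~\ref{P: faithfulexp} concerns the constructed pair $(\M_q,\D_q)$, not the given one).
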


\begin{proof} Let $B$ be a Bures-closed $\D$-bimodule in $\M$ and let
$A:=\Theta(B)$.  We will first show that $A$ is a spectral set in
$\S$.  Since $B$ is a $\D$-bimodule, if $s\in A$ and $t\leq s$, then
$t\in A$.  Next, suppose that $\{s_i\}_{i\in I}$ is a pairwise
orthogonal family in $A$ and let $s=\bigvee s_i$.  For $i\neq k$, the
orthogonality of $s_i$ and $s_k$ implies that $j(s_i)$ and $j(s_k)$
are partial isometries with orthogonal initial spaces and orthogonal
range spaces.  Therefore, the sum $\sum_{i\in I} j(s_i)$ converges
strong-$*$ to an element $v\in \G\N(\M,\D)$.  As the Bures topology is
weaker than the strong-$*$ topology, $v\in \G\N(B,\D)$.  For every $i\in I$,
$q(vj(s_i^\dag s_i))= s_i$, and it follows that $q(v)=s$.  Thus
$j(s)\in B$, and hence $s\in A$.  Therefore $A=\Theta(B)$ is a
spectral set.

We now prove that $A=\Theta(\Psi(A))$.   Clearly, $A\subseteq
\Theta(\Psi(A))$.  If $A\neq
\Theta(\Psi(A))$, then there exists $t\in \Theta(\Psi(A))$ such that
$t\wedge s=0$ for all $s\in A$.  Thus, suppose $t\in \S$ and $t\wedge
s=0$ for all $s\in A$.  Then $E(j(t)^*j(s))=0$ for all $s\in A$.  It
follows from Corollary 2.3.2 and Lemma 1.4.6 of
\cite{CameronPittsZarikianBiCaMASAvNAlNoAlMeTh}, that $t$ is not in
the Bures-closed bimodule generated by $j(A)$.  Hence
$A=\Theta(\Psi(A)$.

That $\Psi(\Theta(B))=B$ follows from the fact that $B$ is generated
as a $\D$-bimodule by $B\cap \G\N(\M,\D)$. Finally, the order
preserving properties follow by the definitions of $\Theta$ and
$\Psi$.
\end{proof}

Recall that a sub-inverse monoid $\T\subseteq \S$ is \textit{full} if
$\E(\T)=\E(\S)$.  Let
\[ W:=\{\N\subseteq \M: \N \text{ is a von Neumann algebra and }
\D\subseteq \N\subseteq \M\}
\] and let
\[ T:=\{\T\subseteq \S: \T \text{ is a full Cartan inverse submonoid}
\}.\] It follows by Aoi's Theorem~\cite{AoiCoEqSuInSu} that if $\M$
has a separable predual, then for any $\N\in W$, $(\N,\D)$ forms a
Cartan pair.  Cameron, Pitts and Zarikian give an alternative proof of
Aoi's Theorem
\cite[Theorem~2.5.9]{CameronPittsZarikianBiCaMASAvNAlNoAlMeTh}.  Their
approach shows that every von Neumann algebra $\N$ with
$\D\subseteq\N\subseteq \M$ is Bures-closed and does not require that
$\M$ has a separable predual.  We note that, while Aoi's original
approach relied on the Feldman-Moore construction of Cartan pairs, the
proof in \cite{CameronPittsZarikianBiCaMASAvNAlNoAlMeTh} is
independent of the work of Feldman and Moore.  The following corollary
to Theorem~\ref{T: Spectral Theorem} is immediate.

\begin{corollary}\label{C: Aoi app} The map $\Theta|_W$ is a bijection of
$W$ onto $S$ and $\Theta_W^{-1}=\Psi|_S$. 
\end{corollary}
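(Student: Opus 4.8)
The plan is to deduce this corollary directly from the Spectral Theorem for Bimodules (Theorem~\ref{T: Spectral Theorem}) together with the Cameron--Pitts--Zarikian strengthening of Aoi's Theorem. By \cite[Theorem~2.5.9]{CameronPittsZarikianBiCaMASAvNAlNoAlMeTh}, every $\N\in W$ is automatically a Bures-closed $\D$-bimodule, so $\Theta|_W$ is a genuine restriction of the lattice isomorphism of Theorem~\ref{T: Spectral Theorem} and is therefore injective, with $\Psi\circ\Theta=\mathrm{id}$ on its domain. Since $\Theta$ and $\Psi$ are already known to be mutually inverse bijections between \emph{all} Bures-closed $\D$-bimodules and \emph{all} spectral sets, it suffices to identify the images: I would show $\Theta(W)\subseteq T$ and $T\subseteq\Theta(W)$, the latter via $\Psi(T)\subseteq W$ and $\Theta\circ\Psi=\mathrm{id}$. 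Here $T$ is the collection of full Cartan inverse submonoids defined just before the statement.

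For the inclusion $\Theta(W)\subseteq T$, I would fix $\N\in W$ and examine $\Theta(\N)=q(\G\N(\N,\D))$. Because $\D\subseteq\N\subseteq\M$, one has $\G\N(\N,\D)=\N\cap\G$, an inverse submonoid of $\G$ containing $I$; as $q$ is a homomorphism, $\Theta(\N)$ is an inverse submonoid of $\S$. Fullness is immediate, since $\E(\G\N(\N,\D))=\mathrm{Proj}(\D)=\E(\G)$ and $q|_{\E(\G)}$ is an isomorphism onto $\E(\S)$, whence $\E(\Theta(\N))=\E(\S)$. To see that $\Theta(\N)$ is Cartan, I would note that $(\N,\D)$ is itself a Cartan pair and that, because $\E(\G\N(\N,\D))=\E(\G)$, the Munn congruence of $\G\N(\N,\D)$ is exactly the restriction of $R_M$; thus $\Theta(\N)$ is precisely the Cartan inverse monoid attached to $(\N,\D)$ by Proposition~\ref{P: S Cartan}.

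For the reverse inclusion I would show $\Psi(\T)\in W$ for each $\T\in T$. By definition $\Psi(\T)=\overline{\spn}^{\text{Bures}}(j(\T))$ is a Bures-closed $\D$-bimodule; it contains $\D=\D\,j(1)$ because $1\in\T$, and it is self-adjoint because $j(s^\dag)=j(s)^\dag$ by Corollary~\ref{C: orderpres}. The essential point is closure under products. Using that each $j(s)$ is a normalizer, any word in $\D\cup j(\T)$ can be pushed into the form $d'\,j(s_1)\cdots j(s_n)$ with $d'\in\D$, since $j(s)d=\bigl(j(s)\,d\,j(s)^\dag\bigr)j(s)$ with $j(s)dj(s)^\dag\in\D$; and iterating $j(s)j(t)=j(st)\,\sigma(j(s),t)$ with $\sigma(j(s),t)\in\P\subseteq\D$ gives $j(s_1)\cdots j(s_n)\in j(s_1\cdots s_n)\,\D\subseteq\Psi(\T)$ as $s_1\cdots s_n\in\T$. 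Hence the $*$-algebra generated by $\D\cup j(\T)$ lies in $\Psi(\T)$, and since $\Psi(\T)$ is weak-$*$ closed it is a unital self-adjoint weak-$*$-closed algebra, i.e.\ a von Neumann algebra with $\D\subseteq\Psi(\T)\subseteq\M$.

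I expect the last step to be the main obstacle: $j$ is only an order-preserving section, not a homomorphism, so products of generators land in $j(\T)\,\D$ only up to the cocycle $\sigma$, and the passage from the generating $*$-algebra to the full Bures closure hinges on knowing that $\Psi(\T)$ is weak-$*$ closed and that multiplication behaves well across the Bures and weak-$*$ topologies---this is precisely where the topological machinery of \cite{CameronPittsZarikianBiCaMASAvNAlNoAlMeTh} is needed. Once $\Psi(\T)\in W$ is established, $\Theta(\Psi(\T))=\T$ (from Theorem~\ref{T: Spectral Theorem}) yields $T\subseteq\Theta(W)$; combined with $\Theta(W)\subseteq T$ and the mutual-inverse relations $\Theta\circ\Psi=\mathrm{id}$, $\Psi\circ\Theta=\mathrm{id}$, the maps $\Theta|_W$ and $\Psi|_T$ are mutually inverse bijections, which is the assertion of the corollary.
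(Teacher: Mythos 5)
Your overall route is the paper's own: the paper offers no written argument at all, declaring the corollary ``immediate'' from Theorem~\ref{T: Spectral Theorem} together with the Cameron--Pitts--Zarikian form of Aoi's theorem quoted just before the statement, and your Steps 1 and 2 supply exactly the intended details. In particular, your observation that $\E(\G\N(\N,\D))=\mathrm{Proj}(\D)=\E(\G)$ forces the Munn congruence of $\G\N(\N,\D)$ to be the restriction of $R_M$, so that $\Theta(\N)$ is precisely the Cartan inverse monoid that Proposition~\ref{P: S Cartan} attaches to the Cartan pair $(\N,\D)$, is the right way to get $\Theta(W)\subseteq T$; and you correctly (if silently) repair the typo in the statement, where ``$S$'' should read ``$T$''.

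The one step you do not close is the one you flag, and as written the bridging sentence is a non sequitur: knowing that the $*$-algebra generated by $\D\cup j(\T)$ lies inside $\Psi(\T)$ and that $\Psi(\T)$ is weak-$*$ closed does not make $\Psi(\T)$ an algebra --- a weak-$*$ closed set containing a generating $*$-algebra need only contain, not equal, the weak-$*$ closure of that algebra. The clean repair avoids proving multiplicativity of the Bures closure directly. Your cocycle computation already shows that $\spn(\D\, j(\T)\,\D)$ is a unital $*$-algebra (self-adjointness uses Corollary~\ref{C: orderpres} and $\T^\dag=\T$; products use $j(s)d=(j(s)dj(s)^\dag)j(s)$ and $j(s)j(t)=j(st)\sigma(j(s),t)$ with $\sigma(j(s),t)\in\P\subseteq\D$). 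Hence its weak-$*$ closure $\N_0$ is a von Neumann algebra with $\D\subseteq\N_0\subseteq\M$, i.e.\ $\N_0\in W$. Now sandwich: by \cite[Theorem~2.5.9]{CameronPittsZarikianBiCaMASAvNAlNoAlMeTh}, $\N_0$ is Bures-closed, and it contains $j(\T)$, so $\Psi(\T)\subseteq\N_0$; conversely, the Bures closure of a $\D$-bimodule contains its weak-$*$ closure (the Cameron--Pitts--Zarikian comparison of the two topologies that underlies the inclusion chain displayed before Theorem~\ref{T: Spectral Theorem}, and which also justifies your claim that $\Psi(\T)$ is weak-$*$ closed), so $\N_0\subseteq\overline{\spn}^{\text{Bures}}(\D\, j(\T)\,\D)=\Psi(\T)$, the last equality because $\Psi(\T)$ is a Bures-closed $\D$-bimodule containing $j(\T)$. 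Thus $\Psi(\T)=\N_0\in W$, and the remainder of your argument ($\Theta(\Psi(\T))=\T$ from Theorem~\ref{T: Spectral Theorem}, plus the mutual-inverse relations) goes through. Note that your alternative instinct --- pushing products through Bures limits as in the proof pattern of Lemma~\ref{L: Balg1} --- would handle multiplicativity but still leaves self-adjointness of the Bures closure unaddressed, since the Bures seminorms $T\mapsto\sqrt{\tau(E(T^*T))}$ are not adjoint-symmetric; the sandwich argument sidesteps both issues at once.
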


\subsection{Subdiagonal Algebras} If $\N$ is a von Neumann algebra
such that $\D\subseteq \N\subseteq \M$,
\cite[Theorem~2.5.9]{CameronPittsZarikianBiCaMASAvNAlNoAlMeTh} shows
that $\N$ is Bures-closed and there exists a unique Bures-continuous,
faithful conditional expectation $\Phi_\N:\M\rightarrow\N$.
 
We record the following two lemmas.  We first show that under certain
circumstances, the Bures closure of an algebra is again an algebra.
Then we show that given a von Neumann algebra $\D\subseteq \N
\subseteq \M$, the conditional expectation $\Phi_\N$ is multiplicative
on certain subalgebras of $\M$.

\begin{lemma}\label{L: Balg1} Suppose $\A$ is a weak-$*$-closed
subalgebra of $\M$ containing $\D$, and let $\N:=\A\cap \A^*$.  Then
the Bures closure of $\A$ is a subalgebra of $\M$ and
$\N=\overline{\A}^{\text{Bures}}\cap
(\overline{\A}^{\text{Bures}})^*$.
\end{lemma}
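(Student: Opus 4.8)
The plan is to translate everything into the language of spectral sets via the Spectral Theorem for Bimodules (Theorem~\ref{T: Spectral Theorem}). Since $\A$ is a weak-$*$-closed subalgebra containing $\D$, it is in particular a weak-$*$-closed $\D$-bimodule, so the argument in the proof of Theorem~\ref{T: Spectral Theorem} applies to $\A$ and shows $A:=\Theta(\A)=q(\G\N(\A,\D))$ is a spectral set. I would first record the dictionary fact that, for $s\in\S$, $j(s)\in\A$ if and only if $s\in A$: the forward direction is immediate, and if $s\in A$ I pick $v\in\G\N(\A,\D)$ with $q(v)=s$ and write $j(s)=v\,(v^*j(s))$ with $v^*j(s)\in\P\subseteq\D$, so $j(s)\in\A\D\subseteq\A$. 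Combined with \cite[Prop.~2.5.3, Thm.~2.5.1]{CameronPittsZarikianBiCaMASAvNAlNoAlMeTh} this yields $\overline{\A}^{\text{Bures}}=\Psi(A)$ and $\G\N(\Psi(A),\D)=\{v\in\G\N(\M,\D):q(v)\in A\}$. The decisive extra feature is that $A$ is a \emph{subsemigroup} of $\S$: because $\G\N(\M,\D)$ is an inverse semigroup and $\A$ is multiplicatively closed, $\G\N(\A,\D)$ is closed under products, and applying $q$ gives $st\in A$ whenever $s,t\in A$.

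For the first assertion I would show that products of the generating normalizers stay in $\Psi(A)$: for $s,t\in A$ both $j(s)j(t)$ and $j(st)$ have $q$-image $st\in A$, so $j(s)j(t)=j(st)\,p$ for some $p\in\P\subseteq\D$, whence $j(s)j(t)\in\D\,j(st)\subseteq\Psi(A)$. With the $\D$-bimodule structure this makes the algebraically spanned bimodule $Y:=\operatorname{span}\{d_1j(s)d_2:s\in A,\ d_i\in\D\}$ a subalgebra whose Bures closure is $\Psi(A)$. The passage from $Y$ to $\overline{Y}^{\text{Bures}}=\Psi(A)$ is the delicate point, and \emph{I expect it to be the main obstacle}, since multiplication is not jointly Bures-continuous. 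The ingredients I would use are the one-sided estimates $\norm{vz}_\tau^2=\tau(E(z^*v^*vz))\le\tau(E(z^*z))=\norm{z}_\tau^2$ and $\norm{zv}_\tau^2=\tau(E(v^*z^*zv))=\tau(v^*E(z^*z)v)$ (the latter using the normalizer identity $E(v^*xv)=v^*E(x)v$), which show left and right multiplication by a fixed normalizer---hence by any element of $Y$---are Bures-continuous; closing up in \emph{both} factors, however, genuinely requires the Bures density/continuity apparatus of \cite{CameronPittsZarikianBiCaMASAvNAlNoAlMeTh} rather than naive separate continuity, and I would invoke their description of $\Psi(A)$ as the Bures-closed bimodule generated by $\{j(s):s\in A\}$ to conclude $\Psi(A)\Psi(A)\subseteq\Psi(A)$.

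For the second assertion I would compute the spectral set of $\N=\A\cap\A^*$. Using $j(s)^*=j(s^\dag)$ (Corollary~\ref{C: orderpres}) and the dictionary fact above, $j(s)\in\N$ if and only if $s\in A\cap A^\dag$, where $A^\dag:=\{s^\dag:s\in A\}$; since every normalizer in $\N$ has the form $j(q(v))d$ with $d\in\D$, this gives $\Theta(\N)=A\cap A^\dag$. As $\N$ is a von Neumann algebra with $\D\subseteq\N\subseteq\M$, it is Bures-closed and equals $\Psi(\Theta(\N))=\Psi(A\cap A^\dag)$ by \cite[Thm.~2.5.9]{CameronPittsZarikianBiCaMASAvNAlNoAlMeTh} together with Theorem~\ref{T: Spectral Theorem}.

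Finally I would identify $\overline{\A}^{\text{Bures}}\cap(\overline{\A}^{\text{Bures}})^*$ with $\Psi(A\cap A^\dag)$. The adjoint (which is weak-$*$-continuous) sends the weak-$*$-closed bimodule $\A$ to the weak-$*$-closed bimodule $\A^*$ with $\Theta(\A^*)=A^\dag$, so $\overline{\A^*}^{\text{Bures}}=\Psi(A^\dag)$; checking, via \cite{CameronPittsZarikianBiCaMASAvNAlNoAlMeTh}, that the adjoint of a Bures-closed $\D$-bimodule is again Bures-closed then yields $(\overline{\A}^{\text{Bures}})^*=\Psi(A^\dag)$. Because $\Theta$ and $\Psi$ form a lattice isomorphism carrying intersection to intersection (Theorem~\ref{T: Spectral Theorem}), it follows that $\overline{\A}^{\text{Bures}}\cap(\overline{\A}^{\text{Bures}})^*=\Psi(A)\cap\Psi(A^\dag)=\Psi(A\cap A^\dag)=\N$. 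The inclusion $\N\subseteq\overline{\A}^{\text{Bures}}\cap(\overline{\A}^{\text{Bures}})^*$ is immediate; the substance of the reverse inclusion lies entirely in the identification $(\overline{\A}^{\text{Bures}})^*=\Psi(A^\dag)$, i.e.\ in the interaction of the adjoint with Bures closure, which is the second point where I would lean on the results of \cite{CameronPittsZarikianBiCaMASAvNAlNoAlMeTh}.
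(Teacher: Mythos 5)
There are two genuine gaps, and they sit exactly at the two places you flagged. First, your meta-claim that closing up $\Psi(A)\Psi(A)\subseteq\Psi(A)$ ``genuinely requires the Bures density/continuity apparatus \ldots rather than naive separate continuity'' is wrong, and the appeal to \cite{CameronPittsZarikianBiCaMASAvNAlNoAlMeTh} at that point is not an argument: no result there delivers multiplicativity of a Bures closure off the shelf. In fact the two estimates you already wrote down suffice, provided you iterate them in the right order, and this is precisely the paper's proof (which, incidentally, never needs spectral sets, the semigroup property of $A$, or the section $j$ for this half). Fix $X\in\B:=\overline{\A}^{\text{Bures}}$ and write $X=\text{Bures-}\lim X_\lambda$ with $X_\lambda\in\spn\G\N(\A,\D)\subseteq\A$. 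For a fixed $v\in\G\N(\A,\D)$, the normalizer identity $E(v^*(X_\lambda-X)^*(X_\lambda-X)v)=v^*E((X_\lambda-X)^*(X_\lambda-X))v$ shows $X_\lambda v\to Xv$ in the Bures topology; since $\A$ is an algebra, $X_\lambda v\in\A$, so $Xv\in\B$. By linearity, $XZ\in\B$ for every $Z\in\spn\G\N(\A,\D)$. Now for $Y=\text{Bures-}\lim Y_\lambda\in\B$ with $Y_\lambda\in\spn\G\N(\A,\D)$, each $XY_\lambda\in\B$, and the estimate $E\bigl((X(Y-Y_\lambda))^*(X(Y-Y_\lambda))\bigr)\leq\norm{X}^2E\bigl((Y-Y_\lambda)^*(Y-Y_\lambda)\bigr)$ gives $XY_\lambda\to XY$; Bures-closedness of $\B$ then yields $XY\in\B$. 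So the ``main obstacle'' you identify dissolves: one limit is taken with the right factor frozen at a normalizer, the other with the left factor frozen at the bounded element $X$.

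Second, your route to $\N=\B\cap\B^*$ hinges on the identification $(\overline{\A}^{\text{Bures}})^*=\Psi(A^\dag)$, which requires the adjoint of a Bures-closed $\D$-bimodule to be Bures-closed. This is not something you can just ``check via'' \cite{CameronPittsZarikianBiCaMASAvNAlNoAlMeTh}: the Bures seminorms $T\mapsto\sqrt{\tau(E(T^*T))}$ are not adjoint-symmetric, so $(\overline{\A}^{\text{Bures}})^*$ is a priori only the closure of $\A^*$ in the \emph{opposite} topology generated by $T\mapsto\sqrt{\tau(E(TT^*))}$, and no quotable adjoint-stability statement is available. The paper sidesteps this entirely by intersecting \emph{before} taking adjoint-closures: each Bures seminorm is the $\sigma$-strong seminorm associated to the normal positive functional $\tau\circ E\in\M_*^+$, so the Bures-closed convex set $\B$ is $\sigma$-weakly closed; hence $\B\cap\B^*$ is a weak-$*$-closed $*$-subalgebra, i.e., a von Neumann algebra with $\D\subseteq\B\cap\B^*\subseteq\M$. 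By \cite[Theorem~2.5.1]{CameronPittsZarikianBiCaMASAvNAlNoAlMeTh}, $\A$ and $\B$ contain the same normalizers, so $\B\cap\B^*\cap\G\N(\M,\D)=\A\cap\A^*\cap\G\N(\M,\D)$, and since both $\N$ and $\B\cap\B^*$ are von Neumann algebras between $\D$ and $\M$, each equals the Bures-closed span of this common set of normalizers (by \cite[Theorems~2.5.9 and~2.5.1]{CameronPittsZarikianBiCaMASAvNAlNoAlMeTh}, or in your language $\Psi\circ\Theta=\mathrm{id}$ applied to $\Theta(\B\cap\B^*)=A\cap A^\dag=\Theta(\N)$); hence $\N=\B\cap\B^*$. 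The remainder of your proposal --- the dictionary $j(s)\in\A\iff s\in A$, the identity $\overline{\A}^{\text{Bures}}=\Psi(A)$, the semigroup property of $A$, and $\N=\Psi(A\cap A^\dag)$ --- is correct, but both load-bearing steps must be repaired as above.
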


\begin{proof} Let $\B$ be the Bures closure of $\A$ and choose
$X\in\B$.
By~\cite[Theorem~2.5.1]{CameronPittsZarikianBiCaMASAvNAlNoAlMeTh},
there exists a net $X_\lambda\in\spn\G\N(\A,\D)$ such that
$\text{Bures-}\lim X_\lambda =X$.  Let $v\in\G\N(\A,\D)$.  Since
$E(v^*(X_\lambda-X)^*(X_\lambda-X)v)=v^*E((X_\lambda-X)^*(X_\lambda-X))v$,
it follows that $\text{Bures-}\lim (X_\lambda v)=Xv$. Thus $Xv\in\B$.

Now suppose that $Y\in \B$.  We may write $Y=\text{Bures-}\lim
Y_\lambda$, where $Y_\lambda\in \spn(\G\N(\A,\D))$.  Then $XY_\lambda
\in \B$ for every $\lambda$.  Moreover, the estimate,
\[E((X(Y-Y_\lambda))^*(X(Y-Y_\lambda)))\leq \norm{X}^2
E((Y-Y_\lambda)^*(Y-Y_\lambda))\] implies that $XY_\lambda$ Bures
converges to $XY$, so $XY\in\B$.  Thus $\B$ is an algebra.

By \cite[Theorem~2.5.1]{CameronPittsZarikianBiCaMASAvNAlNoAlMeTh},
$\A\cap \G\N(\M,\D)=\B\cap \G\N(\M,\D)$.  Therefore, $\A\cap
\A^*\cap\G\N(\M,\D)= \B\cap \B^*\cap \G\N(\M,\D)$.  But $\N$ is the
Bures closure of $\A\cap\A^*\cap \G\N(\M,\D)$, so $\A\cap \A^*=\B\cap
\B^*$.
\end{proof}

\begin{lemma}\label{L: Balg2} Suppose $\A$ is a Bures-closed
subalgebra of $\M$ containing $\D$, and let $\N:=\A\cap \A^*$.  Then
for $X, Y\in\A$, $\Phi_\N(XY)=\Phi_\N(X)\Phi_\N(Y)$
\end{lemma}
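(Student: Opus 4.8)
The plan is to reduce the identity to a statement about groupoid normalizers and then to the order structure of $\S$, where the algebra hypothesis on $\A$ can be exploited directly. Throughout, let $\Phi_\N\colon\M\to\N$ be the Bures-continuous faithful normal conditional expectation onto $\N$; it is $\N$-bimodular, and since the normal conditional expectation onto the Cartan MASA $\D$ is unique, $E=E_\N\circ\Phi_\N$, where $E\colon\M\to\D$ and $E_\N\colon\N\to\D$ are the Cartan expectations. Recall also that $(\N,\D)$ is itself a Cartan pair, so $\D$ is a MASA in $\N$.

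First I would make the standard bimodule reduction. Writing $X=\Phi_\N(X)+X_0$ and $Y=\Phi_\N(Y)+Y_0$ with $X_0,Y_0\in\A_0:=\A\cap\ker\Phi_\N$, the $\N$-bimodularity of $\Phi_\N$ collapses three of the four resulting terms, leaving $\Phi_\N(XY)=\Phi_\N(X)\Phi_\N(Y)+\Phi_\N(X_0Y_0)$. Thus it suffices to prove $\Phi_\N(X_0Y_0)=0$ for $X_0,Y_0\in\A_0$; note $X_0Y_0\in\A$ because $\A$ is an algebra, which is exactly where that hypothesis enters. Since $\A_0$ is a Bures-closed $\D$-bimodule (the intersection of the Bures-closed $\A$ with the kernel of the Bures-continuous $\D$-bimodule map $\Phi_\N$), the Spectral Theorem for Bimodules (Theorem~\ref{T: Spectral Theorem}) gives $\A_0=\overline{\spn}^{\text{Bures}}\G\N(\A_0,\D)$. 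Using that left multiplication by a bounded operator and right multiplication by a groupoid normalizer are both Bures-continuous (via $E(v^*zv)=v^*E(z)v$ and $E(z^*az)\le\|a\|E(z^*z)$, as in the proof of Lemma~\ref{L: Balg1}), I would first approximate the left factor and then the right factor, reducing to showing $\Phi_\N(vw)=0$ for all $v,w\in\G\N(\A_0,\D)$.

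The heart of the argument is a dictionary between the kernel of $\Phi_\N$ on normalizers and the order structure of $\S$. Put $A:=\Theta(\A)=q(\G\N(\A,\D))$ and $A_0:=\Theta(\N)=A\cap A^\dag$; here $A$ is a spectral set closed under the (nonzero) multiplication of $\S$, and $A_0$ is a full Cartan inverse submonoid. Call $s\in A$ \emph{$A_0$-free} if no nonzero restriction $se$ (with $0\ne e\le s^\dag s$) lies in $A_0$. I claim that for $v\in\G\N(\A,\D)$ one has $\Phi_\N(v)=0$ if and only if $q(v)$ is $A_0$-free. The forward direction is immediate: if $se\in A_0$ with $e\ne0$ then $ve\in\N$, so $ve=\Phi_\N(ve)=\Phi_\N(v)e=0$, forcing $e=0$. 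For the converse I would argue contrapositively: if $\Phi_\N(v)\ne0$, the identity $\Phi_\N(v)d=(vdv^*)\Phi_\N(v)$ for $d\in\D$ shows $\Phi_\N(v)$ is a $\D$-normalizing element, so (using that $\D$ is a MASA in $\N$) its polar decomposition $\Phi_\N(v)=uh$ has $u\in\G\N(\N,\D)$ and $0\le h\in\D$, whence $q(u)\in A_0$. Computing with $E=E_\N\circ\Phi_\N$ gives $E(v^*u)=h\ne0$, so the normalizer $v^*u$ has nonzero fixed-point idempotent $f:=q(v)^\dag q(u)\wedge1$, and Leech's Lemma (Lemma~\ref{L: Leech}) yields $q(v)f=q(u)f\in A_0$, a nonzero restriction of $q(v)$ lying in $A_0$. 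This establishes the dictionary.

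Finally, the combinatorial core: I would show the product of two $A_0$-free elements of $A$ is again $A_0$-free, which by the dictionary gives $\Phi_\N(vw)=0$ and finishes the proof. Suppose $s,t\in A$ are $A_0$-free, $st\ne0$, and for contradiction that some $0\ne r\le st$ lies in $A_0$. Write $r=(st)f$ with $f:=r^\dag r\le(st)^\dag(st)$, and set $u:=r(tf)^\dag=s(tft^\dag)$. A short idempotent computation gives $tft^\dag\le t(t^\dag s^\dag s t)t^\dag=s^\dag s\wedge tt^\dag\le s^\dag s$, so $u=s(tft^\dag)\le s$ is a nonzero restriction of $s$; meanwhile $u^\dag=(tf)r^\dag$ is a product of $tf\le t\in A$ and $r^\dag\in A$ (the latter because $r\in A_0=A\cap A^\dag$), hence $u^\dag\in A$ since $A$ is closed under multiplication. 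Therefore $u\in A\cap A^\dag=A_0$ is a nonzero restriction of $s$ in $A_0$, contradicting the $A_0$-freeness of $s$. The crucial — and only — place the \emph{algebra} structure of $\A$ is used is in closing $A$ under multiplication so that $u^\dag\in A$; this is exactly what rules out the matrix-unit counterexample $E_{12}E_{21}=E_{11}$, in which two off-diagonal normalizers multiply to a diagonal one. I expect this combinatorial step, together with the polar-decomposition half of the dictionary, to be the main obstacle; the bimodule reduction and the continuity bookkeeping are routine given Theorem~\ref{T: Spectral Theorem} and the estimates already used for Lemma~\ref{L: Balg1}.
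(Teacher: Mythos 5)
Your proposal is correct, but its core takes a genuinely different route from the paper's. The outer skeleton coincides: the paper likewise writes $X_i=\Phi_\N(X_i)+Y_i$, uses bimodularity and Bures continuity of $\Phi_\N$ together with the Bures density of $\spn\G\N(\A,\D)$ in $\A$, and thereby reduces the lemma to showing that $\J\cap\G\N(\A,\D)$ is a semigroup, where $\J=\ker(\Phi_\N|_\A)$. The difference is how that semigroup property is established. The paper stays entirely at the operator level: it first proves that if $u,v\in\G\N(\A,\D)$ with $0\in\{\Phi_\N(u),\Phi_\N(v)\}$ and $uv\in\N$, then (say when $\Phi_\N(u)=0$) $vv^*u^*\in\A\cap\A^*=\N$, so $vv^*u^*=\Phi_\N(vv^*u^*)=vv^*\Phi_\N(u)^*=0$, whence $uv=0$; it then quotes \cite[Lemma~2.3.1(a)]{CameronPittsZarikianBiCaMASAvNAlNoAlMeTh} to produce a projection $p$ with $uvp=\Phi_\N(uv)$ and applies this claim to $u$ and $vp$. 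You instead push the question down to $\S$ through $\Theta$: the polar-decomposition half of your dictionary does exactly the work of Lemma~2.3.1(a) (locating the projection on which $\Phi_\N(v)$ lives), and your combinatorial closure lemma is precisely the $q$-image of the paper's operator claim --- note that your argument uses only the $A_0$-freeness of $s$, not of $t$, matching the asymmetry $0\in\{\Phi_\N(u),\Phi_\N(v)\}$ in the paper. The paper's route is shorter and self-contained; yours buys an intrinsic statement about spectral sets (a product in $A$ with an $A_0$-free factor is $A_0$-free or zero) that isolates exactly where the algebra hypothesis enters, namely closure of $A=\Theta(\A)$ under multiplication via $q(\G\N(\A,\D))$, and it fits the paper's broader program of recasting operator-algebraic facts as inverse-monoid ones. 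Two minor remarks: you do not need Aoi's theorem for ``$\D$ is a MASA in $\N$,'' since $\D'\cap\N\subseteq\D'\cap\M=\D$ is automatic; and your two approximation steps must indeed be performed in the order you state, because right multiplication is Bures-continuous only by normalizers while left multiplication is Bures-continuous by arbitrary bounded operators --- which you handled correctly.
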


\begin{proof} Let $\J:=\ker(\Phi_\N |_\A)$.  We shall show that
$\J\cap \G\N(\A,\D)$ is a semigroup.

Suppose first that $u, v\in\G\N(\A,\D)$, that $0\in\{\Phi_\N (u),
\Phi_\N (v)\}$, and $uv\in\N$.  We claim that $uv=0$.  To see this,
suppose that $\Phi_\N (u)=0$.  As $\N$ is closed under adjoints,
$\A\ni v(v^*u^*)=(vv^*)u^*\in\A^*$, so $vv^*u^*\in\N$.  Hence
\[vv^*u^*=\Phi_\N (vv^*u^*)=vv^*\Phi_\N (u)^*=0.\] It follows that
$v^*u^*=uv=0$.  A similar argument shows that $uv=0$ under the
assumption that $\Phi_\N (v)=0$, so the claim holds.
  
Now let $u, v\in\J\cap \G\N(\A,\D)$.  By
\cite[Lemma~2.3.1(a)]{CameronPittsZarikianBiCaMASAvNAlNoAlMeTh}, there
exists $p\in\text{proj}(\D)$ such that $uvp=\Phi_\N (uv)$.  The claim
applied to $u$ and $vp$ shows that $uvp=0$, so $\J\cap \G\N(\A,\D)$ is
a semigroup.

Let $\A_0=\spn\G\N(\A,\D)$.  For $i=1,2$, let $X_i\in\A_0$.  Then
$\Phi_\N(X_i)\in \A_0$.  Write $X_i=\Phi_\N(X_i)+ Y_i$ where
$Y_i=X_i-\Phi_\N(X_i)\in \spn(\J\cap \G\N(\A,\D))$.  Since $\J\cap
\G\N(\A,\D)$ is a semigroup, $\spn(\J\cap \G\N(\A,\D))$ is an algebra.
Then
\begin{align*}\label{6.5.b} \Phi_\N(X_1X_2)&=\Phi_\N(X_1)\Phi_\N(X_2)+
\Phi_\N(\Phi_\N(X_1)Y_2+ Y_2\Phi_\N(X_2)) +\Phi_N(Y_1Y_2)\\
&=\Phi_\N(X_1)\Phi_\N(X_2).
\end{align*}

As $\Phi_\N$ is Bures continuous, the previous equality also holds for
$X_i\in\overline{\A_0}^\text{Bures}$, and we are done.
\end{proof}

\begin{definition} Let $\A$ be a weak-$*$-closed subalgebra of $\M$
such that $\D\subseteq \A\subseteq \M$, and put $\N=\A\cap \A^*$.
Then
\begin{enumerate}
\item $\A$ is \textit{subdiagonal} if $\A+\A^*$ is weak-$*$ dense in
$\M$ and $\Phi_\N|_\A$ is multiplicative;
\item $\A$ is \textit{maximal subdiagonal} if there is no subdiagonal
subalgebra $\B$ of $\M$ with $\B\cap \B^*=\A\cap \A^*$ which properly
contains $\A$;
\item $\A$ is \textit{triangular} if $\A$ is subdiagonal and $\A\cap
\A^*=\D$; and
\item $\A$ is \textit{maximal triangular} if there is no triangular
subalgebra $\B$ of $\M$ with $\B\cap \B^*=\D$ which properly contains
$\A$.
\end{enumerate}
\end{definition}

The following is an immediate consequence of Lemma~\ref{L: Balg1} and
Lemma~\ref{L: Balg2}.

\begin{corollary} \label{BalgCor} If $\A$ is a subdiagonal subalgebra
of $\M$ containing $\D$, then $\overline{\A}^{\text{Bures}}$ is a
subdiagonal algebra with $\A\cap \A^*=
\overline{\A}^{\text{Bures}}\cap
\left(\overline{\A}^{\text{Bures}}\right)^*$.  In particular, every
maximal subdiagonal algebra $\A$ with $\D\subseteq \A\subseteq \M$ is
Bures-closed.
\end{corollary}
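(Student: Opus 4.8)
The plan is to set $\B := \overline{\A}^{\text{Bures}}$ and to verify directly that $\B$ is a subdiagonal algebra having the same diagonal $\N := \A\cap\A^*$ as $\A$; the ``in particular'' clause will then drop out of the definition of maximality. First I would unpack the hypotheses: since $\A$ is subdiagonal it is a weak-$*$-closed subalgebra containing $\D$, the sum $\A+\A^*$ is weak-$*$ dense in $\M$, and $\Phi_\N|_\A$ is multiplicative, where $\N=\A\cap\A^*$ is a von Neumann algebra with $\D\subseteq\N\subseteq\M$. Applying Lemma~\ref{L: Balg1} to $\A$ shows at once that $\B$ is a subalgebra of $\M$ and that $\B\cap\B^*=\A\cap\A^*=\N$. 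In particular $\B$ has the same diagonal as $\A$, so the conditional expectation $\Phi_\N$ occurring in the subdiagonality condition is the same map for $\B$ as for $\A$.

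Next I would check the three requirements of subdiagonality for $\B$. For the density requirement, $\A\subseteq\B$ gives $\A+\A^*\subseteq\B+\B^*$, so weak-$*$ density of $\A+\A^*$ forces weak-$*$ density of $\B+\B^*$ in $\M$. For multiplicativity, $\B$ is a Bures-closed subalgebra containing $\D$ with $\N=\B\cap\B^*$, so Lemma~\ref{L: Balg2} applies verbatim to $\B$ and yields $\Phi_\N(XY)=\Phi_\N(X)\Phi_\N(Y)$ for all $X,Y\in\B$, i.e. $\Phi_\N|_\B$ is multiplicative. Finally, $\B$ is a Bures-closed $\D$-bimodule, hence weak-$*$-closed by the framework of~\cite{CameronPittsZarikianBiCaMASAvNAlNoAlMeTh}, so $\B$ is indeed a weak-$*$-closed subalgebra. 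Together these show $\B$ is subdiagonal with $\B\cap\B^*=\A\cap\A^*$, which is the first assertion.

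For the last claim, suppose $\A$ is maximal subdiagonal. By the above, $\B=\overline{\A}^{\text{Bures}}$ is a subdiagonal subalgebra satisfying $\B\cap\B^*=\A\cap\A^*$ and $\A\subseteq\B$. Maximality forbids $\A\subsetneq\B$, hence $\B=\A$ and $\A$ is Bures-closed.

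The only genuinely non-formal point is the weak-$*$-closedness of $\B$ used above: one must know that the Bures closure $\B$ is again weak-$*$-closed, for otherwise $\B$ would not be eligible as a competitor in the definition of maximal subdiagonality. This rests on the relationship between the weak-$*$ and Bures topologies established in~\cite{CameronPittsZarikianBiCaMASAvNAlNoAlMeTh} — specifically that Bures-closed $\D$-bimodules are weak-$*$-closed (the converse implication is precisely the Spectral Theorem for Bimodules, which may fail). Everything else is a direct application of Lemmas~\ref{L: Balg1} and~\ref{L: Balg2} together with the definition of subdiagonality, which is why the corollary is essentially immediate once those lemmas are in hand.
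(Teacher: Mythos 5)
Your proposal is correct and takes essentially the same route as the paper, which offers no written proof but declares the corollary an immediate consequence of Lemma~\ref{L: Balg1} (giving that $\overline{\A}^{\text{Bures}}$ is an algebra with the same diagonal $\N$) and Lemma~\ref{L: Balg2} (giving multiplicativity of $\Phi_\N$ on the Bures closure), exactly as you deduce. The one point the paper leaves implicit---that $\overline{\A}^{\text{Bures}}$ is weak-$*$-closed, hence eligible under the definition of subdiagonal algebra---you correctly identify and justify: since each Bures seminorm $T\mapsto\sqrt{\tau(E(T^*T))}$ is the $\sigma$-strong seminorm of the normal functional $\tau\circ E$, Bures-closed convex sets are $\sigma$-strongly, hence weak-$*$, closed.
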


Muhly, Saito, and Solel assert that any subdiagonal algebra containing
$\D$ is maximal
subdiagonal.  As their proof (\cite[p.\
263]{MuhlySaitoSolelCoTrOpAl})
depends on the spectral theorem for
weak-$*$-closed bimodules, their assertion remains open.  However,
because maximal subdiagonal algebras are Bures-closed, it
is possible to modify their ideas to
 give descriptions of the maximal subdiagonal and maximal
triangular subalgebras of $\M$ which contain $\D$.  To do this, some
notation is helpful.   A submonoid of $\S$ which is also a
spectral set is a
\textit{spectral monoid}.  Let
\begin{align*}\msd(\S)&:=
\{A\subseteq \S: A \text{ is a spectral monoid 
    containing $\E(\S)$ and } 
A\js A^\dag=\S\}  \\
\intertext{and} \mtr(\S)&:=\{A\in \msd(\S): A\cap A^\dag=\E(\S)\}.
\end{align*}

\begin{remark} The sets $\mtr(\S)$ and $\msd(\S)$ correspond to the
sets $\fP$ and $\fP'$ of \cite[p.~258 and
262]{MuhlySaitoSolelCoTrOpAl}, respectively.
\end{remark}

\begin{theorem}\label{MSS3.5} The restriction of $\Psi$ to $\msd(\S)$
gives a bijection of $\msd(\S)$ onto the the set of all maximal
subdiagonal algebras in $\M$ containing $\D$.  In addition, the
restriction of $\Psi$ to $\mtr(\S)$ is a bijection of $\mtr(\S)$ onto
the set of all weak-$*$-closed maximal triangular subalgebras of $\M$
containing $\D$.
\end{theorem}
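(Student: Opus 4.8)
The plan is to run everything through the lattice isomorphism $\Theta$, with inverse $\Psi$, of Theorem~\ref{T: Spectral Theorem}, translating the algebraic conditions that define (maximal) subdiagonality and triangularity into order-theoretic conditions on the associated spectral sets. The backbone is a three-part dictionary. \emph{(a)} $A:=\Theta(\A)$ is a submonoid containing $\E(\S)$ if and only if $\A$ is a subalgebra containing $\D$: given $s,t\in A$, choose $v,w\in\G\N(\A,\D)$ with $q(v)=s$, $q(w)=t$, so $vw\in\A\cap\G\N(\M,\D)$ and $st=q(vw)\in A$, while $\D\subseteq\A$ forces $\E(\S)\subseteq A$; conversely $j(s)j(t)=j(st)\,\cocycle(j(s),t)\in j(st)\P$, so $\Psi(A)$ is closed under products by Bures-continuity of multiplication, exactly as in Lemma~\ref{L: Balg1}. \emph{(c)} Since $\Theta(\A^*)=\Theta(\A)^\dag$ (using $j(t)^*=j(t^\dag)$, Corollary~\ref{C: orderpres}) and $\Theta$ preserves meets, $\Theta(\A\cap\A^*)=A\cap A^\dag$; injectivity of $\Theta$ gives $\A\cap\A^*=\D$ iff $A\cap A^\dag=\E(\S)$.

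The delicate part \emph{(b)} asserts that $\A+\A^*$ is weak-$*$ dense in $\M$ if and only if $A\js A^\dag=\S$. The ``only if'' is easy: $j$ carries an orthogonal join $s_1\vee s_2$ to the sum $j(s_1)+j(s_2)$ of orthogonal partial isometries, so writing a normalizer as $v=j(q(v))h$ with $h\in\P$ and decomposing $q(v)=s_1\vee s_2$ with $s_1\in A$, $s_2\in A^\dag$, we get $v=j(s_1)h+j(s_2)h\in\Psi(A)+\Psi(A)^*$; hence $\spn\G\N(\M,\D)\subseteq\A+\A^*$ and weak-$*$ density follows from the Cartan hypothesis. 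I expect the \emph{converse} to be the main obstacle. Supposing $A\js A^\dag\neq\S$, I would pick $s_0\notin A\js A^\dag$ and, using that $A\js A^\dag$ is a spectral set, peel off a nonzero $s':=s_0\bigl(s_0^\dag s_0\meet\neg e_0\bigr)$, where $e_0=\bigvee\{e\le s_0^\dag s_0: s_0e\in A\js A^\dag\}$; then $s'\wedge a=0$ for every $a\in A\js A^\dag$. By Lemma~\ref{L: Leech} the element $E(j(t)^*j(s'))$ is supported on the fixed-point idempotent $(t\wedge s')^\dag(t\wedge s')$ (cf.\ the formula $E(\lambda_\pi(v))=\lambda_\pi(\Delta(v))$ of Lemma~\ref{L: E calc}), hence vanishes for all $t\in A\js A^\dag$. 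Fixing a normal state $\tau$ on $\D$ with $\tau(j(s'^\dag s'))\neq0$ and setting $\omega:=\tau\circ E\bigl(j(s')^*\,\cdot\,\bigr)$, a Cauchy--Schwarz estimate for the form $(x,y)\mapsto\tau(E(x^*y))$ shows $\omega$ is Bures-continuous, while normality of $E$ and $\tau$ shows $\omega$ is weak-$*$ continuous. Thus $\omega$ vanishes on the Bures-dense generating sets of $\A$ and $\A^*$, hence on $\A+\A^*$, hence on all of $\M$ by weak-$*$ density; but $\omega(j(s'))=\tau(j(s'^\dag s'))\neq0$, a contradiction. This functional-separation step is the technical heart of the theorem.

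With the dictionary in hand, both bijections follow formally. If $A\in\msd(\S)$ then $\Psi(A)$ is a Bures-closed subalgebra containing $\D$ with $\Psi(A)+\Psi(A)^*$ weak-$*$ dense, so it is subdiagonal, multiplicativity of $\Phi_\N$ on it being automatic from Lemma~\ref{L: Balg2}. It is \emph{maximal} subdiagonal by a purely order-theoretic argument: if $\B\supseteq\Psi(A)$ is subdiagonal with $\B\cap\B^*=\Psi(A)\cap\Psi(A)^*$, then $A':=\Theta(\B)$ is a spectral monoid with $A\subseteq A'$, $A'\js A'^\dag=\S$ and $A'\cap A'^\dag=A\cap A^\dag$; any $s\in A'$ writes as $s_1\vee s_2$ with $s_1\in A$ and $s_2\in A^\dag$, and since $s_2\le s\in A'$ we get $s_2\in A'\cap A'^\dag=A\cap A^\dag\subseteq A$, whence $s=s_1\vee s_2\in A$ because $A$ is a spectral set; thus $A'=A$ and $\B=\Psi(A)$. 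Conversely, a maximal subdiagonal $\A\supseteq\D$ is Bures-closed by Corollary~\ref{BalgCor}, so $\A=\Psi(\Theta(\A))$ with $\Theta(\A)\in\msd(\S)$ by \emph{(a)} and \emph{(b)}; injectivity of $\Psi$ (from Theorem~\ref{T: Spectral Theorem}) completes the bijection.

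The triangular statement is the same argument carrying the extra condition $A\cap A^\dag=\E(\S)$, matched to $\A\cap\A^*=\D$ via \emph{(c)}. Here one first notes that a weak-$*$-closed maximal triangular subalgebra $\A\supseteq\D$ is automatically Bures-closed: its Bures closure is subdiagonal with the same diagonal $\D$ (Corollary~\ref{BalgCor}), hence triangular and containing $\A$, so equal to $\A$ by maximality. For $A\in\mtr(\S)$, the algebra $\Psi(A)$ is maximal subdiagonal with $\Psi(A)\cap\Psi(A)^*=\D$, hence triangular, and maximal triangular because any properly larger triangular $\B$ with $\B\cap\B^*=\D$ would contradict maximal subdiagonality of $\Psi(A)$; the reverse inclusion again uses Bures-closedness together with \emph{(a)}, \emph{(b)}, \emph{(c)}. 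The only genuinely nontrivial input beyond Theorem~\ref{T: Spectral Theorem} and Lemmas~\ref{L: Balg1}--\ref{L: Balg2} is the converse half of \emph{(b)}, which is where I would concentrate the detailed work.
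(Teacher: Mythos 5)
Your proposal is correct, and its overall scaffolding coincides with the paper's: the same dictionary under $\Theta$ and $\Psi$ from Theorem~\ref{T: Spectral Theorem}, the same appeal to Lemma~\ref{L: Balg1} and Lemma~\ref{L: Balg2} to see that $\Psi(A)$ is a Bures-closed subdiagonal algebra, the same use of Corollary~\ref{BalgCor} to reduce maximal subdiagonal (and, via the Bures-closure trick, maximal triangular) algebras to Bures-closed ones, and essentially the same maximality argument (your spectral-set computation $s_2\in A'\cap A'^\dag = A\cap A^\dag$ is the paper's decomposition $u=w_1+w_2$ with $w_2\in\B\cap\B^*\subseteq\Psi(A)$, rewritten at the level of $\S$; note you never actually use your assertion $A'\js A'{}^\dag=\S$). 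Where you genuinely diverge is the step you correctly flag as the heart: that weak-$*$ density of $\A+\A^*$ forces $\Theta(\A)\js\Theta(\A)^\dag=\S$. The paper proves this \emph{constructively}: for $v=j(s)$ it invokes \cite[Lemma~2.3.1(a)]{CameronPittsZarikianBiCaMASAvNAlNoAlMeTh} twice to produce projections $p_+,p_-$ with $vp_+\in\A$, $vp_+^\perp p_-\in\A^*$, and $vp_+^\perp p_-^\perp$ $\D$-orthogonal to $\A+\A^*$ and hence zero, yielding the explicit decomposition $s=q(vp_+)\vee q(vp_+^\perp)$. You instead argue by contradiction: peel off $s'\neq 0$ meet-orthogonal to the spectral set $A\js A^\dag$ and separate with $\omega=\tau\circ E(j(s')^*\,\cdot\,)$, which is Bures-continuous by Cauchy--Schwarz and weak-$*$ continuous by normality; this is sound, and it is really the same mechanism as the injectivity step in the paper's proof of Theorem~\ref{T: Spectral Theorem} (where $E(j(t)^*j(s))=0$ plus CPZ~Corollary~2.3.2/Lemma~1.4.6 excludes $t$ from the Bures closure), transplanted to the density question. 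The trade-off: the paper's route buys the explicit two-projection decomposition of every normalizer relative to $\A$, while yours is self-contained modulo CPZ Theorem~2.5.1 and cleanly isolates why a Bures-closed proper subspace cannot be weak-$*$ dense, namely the existence of a functional continuous in \emph{both} topologies. Two details you should spell out: the disjointification showing $s_0e_0\in A\js A^\dag$ for $e_0=\bigvee\{e\leq s_0^\dag s_0: s_0e\in A\js A^\dag\}$ (so that $s'\neq 0$), and the fact that the formula $E(v)=v\,j(q(v)\wedge 1)$ you quote from Lemma~\ref{L: E calc} holds for the abstract pair $(\M,\D)$ --- either via the identity $E(v)=ve$ established inside the proof of Theorem~\ref{T: same data}, or by transporting through Theorem~\ref{T: indeppsi}.
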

\begin{proof} Let $A\in \msd(\S)$.  Since $A\js A^\dag=\S$,
$\G\N(\M,\D)\subseteq \Psi(A)+\Psi(A)^*$, so $\Psi(A)+\Psi(A)^*$ is
weak-$*$ dense in $\M$.  Thus, Lemma~\ref{L: Balg1} and Lemma~\ref{L:
Balg2} shows that $\Psi(\A)$ is a Bures-closed subdiagonal algebra.

Suppose $\B\subseteq \M$ is a subdiagonal algebra with $\B\cap
\B^*=\Psi(A)\cap\Psi(\A)^*$ and $\Psi(A)\subseteq \B$.  If
$u\in\G\N(\B,\D)$, then we may find orthogonal elements $s_1\in A$ and
$s_2\in A^\dag$ such that $q(u)=s_1\vee s_2$.  Then $u=w_1+w_2$, where
$w_i=uj(s_i^\dag s_i)$.  As $\D\subseteq \B$, $w_2\in\B$.  On the
other hand, $q(w_2)^\dag=s_2^\dag\in A$, so $w_2^*\in \Psi(A)^*\subseteq \B^*$,
hence $w_2\in \B\cap \B^*\subseteq \Psi(A)$.  As $w_1\in\Psi(A)$, we
obtain $u\in\Psi(A)$.  Therefore, $\G\N(\B,\D)\subseteq \Psi(A)$.  We
then obtain $\B\subseteq
\overline{\spn}^{\text{Bures}}(\G\N(\B,\D))\subseteq \Psi(A)$.  Thus
$\Psi(A)=\B$, so $\Psi(A)$ is maximal subdiagonal.

On the other hand, suppose $\A\subseteq \M$ is a maximal subdiagonal
algebra containing $\D$.  Set $\N:=\A\cap \A^*$ and let
$A:=\Theta(\A)$.  Since $\D\subseteq \A$, $\E(\S)\subseteq A$;
moreover, $A$ is a monoid because $q$ is a homomorphism and
$\G\N(\A,\D)$ is a monoid.  We need to show that $\S= A\js A^\dag$.

To do this, let $s\in\S$ and set $v=j(s)$.  Using
\cite[Lemma~2.3.1(a)]{CameronPittsZarikianBiCaMASAvNAlNoAlMeTh} twice,
there exist projections $p_+, p_-\in\text{proj}(\D)$ such that:
\begin{enumerate}
\item[i)] $vp_+\in \A$ and $vp_+^\perp$ is $\D$-orthogonal to $\A$;
and
\item[ii)] $vp_+^\perp p_-\in \A^*$ and $vp_+^\perp p_-^\perp$ is
$\D$-orthogonal to $\A^*$.
\end{enumerate} Then $vp_+^\perp p_-^\perp$ is $\D$-orthogonal to
$\A+\A^*$ and hence $\D$-orthogonal to $\M$.  Therefore, $vp_+^\perp
p_-^\perp=0$, so that $vp_+^\perp =vp_+^\perp p_-\in \A^*$.  Then
$s=q(v)=q(vp_+)\vee q(vp_+^\perp)\in A\js A^\dag$.
 Thus, $\Theta(\A)\in \msd(\S)$.

By Theorem~\ref{T: Spectral Theorem}, the restriction of $\Psi$ to the
class of maximal subdiagonal algebras containing $\D$ is a bijection
onto $\msd(\S)$.

It is easy to see that for any maximal triangular algebra $\A$
containing $\D$, $\Psi(\A)\in \mtr(\S)$ and that if $A\in \mtr(\S)$,
then $\Theta(A)$ is a maximal triangular algebra.  Thus the
restriction of $\Psi$ to the class of maximal triangular algebras
containing $\D$ is a bijection onto $\mtr(\S)$.
\end{proof}

\bibliographystyle{amsplain}

\def\cprime{$'$}
\providecommand{\bysame}{\leavevmode\hbox to3em{\hrulefill}\thinspace}
\providecommand{\MR}{\relax\ifhmode\unskip\space\fi MR }

\end{document}